\theoremstyle{plain}
\newtheorem{theorem}{Theorem}[section]
\newtheorem{lemma}[theorem]{Lemma}
\newtheorem{corollary}[theorem]{Corollary}
\newtheorem{remark}[theorem]{Remark}
\theoremstyle{definition}
\newtheorem{definition}[theorem]{Definition}
\newcommand{\R}{\mathbb{R}}
\newcommand{\Z}{\mathbb{Z}}
\newcommand{\N}{\mathbb{N}}
\newcommand{\ba}{\bar{a}}
\newcommand{\bb}{\bar{b}}
\newcommand{\ha}{\hat{a}}
\newcommand{\hu}{\hat{u}}
\newcommand{\cL}{\mathcal{L}}
\newcommand{\cT}{\mathcal{T}}
\newcommand{\cLp}{\mathcal{L}^{(N)}}
\newcommand{\cLpk}{\mathcal{L}^{(N)}_k}
\newcommand{\cLpkinv}{(\cLp_k)^{-1}}
\newcommand{\cQ}{\mathcal{Q}}
\newcommand{\ta}{\tilde{a}}
\newcommand{\cI}{{\mathcal{I}}}
\newcommand{\bydef}{\stackrel{\mbox{\tiny\textnormal{\raisebox{0ex}[0ex][0ex]{def}}}}{=}}
\newcommand{\ball}[2]{\overline{B_{#1}\left( {#2} \right)}}
\newcommand{\eigk}{\mu_k}
\newcommand{\bL}{{\bf{\Lambda}}}
\newcommand{\bmuk}{{\boldsymbol{\mu_k}}}
\newcommand{\im}{{\bf{i}}}
\newcommand{\ro}[1]{row(#1)}
\newcommand{\co}[1]{col(#1)}
\newcommand{\ellmat}{1}
\title{Validated forward integration scheme for \\ parabolic PDEs via Chebyshev series}
\author{
Jacek Cyranka 
\thanks{University of Warsaw, Institute of Informatics, Stefana Banacha 2, 02-097 Warszawa, Poland. {\tt jcyranka@gmail.com}}
\and
Jean-Philippe Lessard\thanks{McGill University, Department of Mathematics and Statistics,  805 Sherbrooke Street West, Montreal, Qu\'ebec H3A 0B9, Canada. {\tt jp.lessard@mcgill.ca}}
}
\date{}
\begin{document}

\maketitle

\begin{abstract}
In this paper we introduce a new approach to compute rigorously solutions of Cauchy problems for a class of semi-linear parabolic partial differential equations. Expanding solutions with Chebyshev series in time and Fourier series in space, we introduce a zero finding problem $F(a)=0$ on a Banach algebra $X$ of Fourier-Chebyshev sequences, whose solution solves the Cauchy problem. The challenge lies in the fact that the linear part $\cL \bydef DF(0)$ has an infinite block diagonal structure with blocks becoming less and less diagonal dominant at infinity. We introduce analytic estimates to show that $\cL$ is an invertible linear operator on $X$, and we obtain explicit, rigorous and computable bounds for the operator norm $\| \cL^{-1}\|_{B(X)}$. These bounds are then used to verify the hypotheses of a Newton-Kantorovich type argument which shows that the (Newton-like) operator $\cT(a) \bydef a - \cL^{-1} F(a)$ is a contraction on a small ball centered at a numerical approximation of the Cauchy problem. The contraction mapping theorem yields a fixed point which corresponds to a classical (strong) solution of the Cauchy problem. The approach is simple to implement, numerically stable and is applicable to a class of PDE models, which include for instance Fisher's equation and the Swift-Hohenberg equation. We apply  our approach to each of these models. 
\end{abstract}

\paragraph{Keywords:} {forward integration, parabolic PDE, Chebyshev series, Newton-Kantorovich, rigorous numeric}

\paragraph{MSC:} {Primary: 65M99, Secondary: 65G20, 65M70, 65Y20}

\section{Introduction} \label{sec:introduction}

In this paper, we introduce a new, fully spectral, validated forward integration scheme for a class of parabolic partial differential equations (PDEs) based on Chebyshev expansion in time. The class of PDE problems we consider is Cauchy problems associated with dissipative semi-linear equations of the form 
\begin{equation} \label{eq:general_PDE}
u_t = Lu + Q(u), \quad u(0,x) = u_0(x)
\end{equation}
where $u=u(t,x) \in \R$, $u_0(x)$ is a given initial condition, $x \in [0,2\pi]$, $t \ge 0$, $\partial_x^j = \frac{\partial^j}{\partial x^j}$, $L = \sum_{\ell=0}^d \gamma_{2\ell} \partial_x^{2\ell}$ ($\gamma_{2 \ell} \in \R$) is a linear differential operator of {\em even} order $2d$ and $Q(u) = \sum_{j=2}^p q_j u^j$ is a polynomial of degree $p$ in $u$ containing no constant term and no linear term. We supplement model \eqref{eq:general_PDE} with even boundary conditions, that is $u(t, -x) = u(t, x)$.

It is worth mentioning that the development of rigorous computational methods to study the flow of dissipative PDEs has received its fair share of attention in the last fifteen years. Let us mention the topological method based on covering relations and self-consistent bounds \cite{MR2049869,MR2788972,chaos_KS,MR3167726,MR3773757,Cy,CyZ}, the $C^1$ rigorous integrator of \cite{MR2728184}, the semi-group approach of \cite{MR3639578,MR3683781,TLJO,JLT_Schrodinger}, and the finite element discretization based approach of  \cite{nakao1,nakao2,nakao3}. This interest is perhaps not surprising as dissipative PDEs naturally lead to the notion of infinite-dimensional dynamical systems in the form of semi-flows, and understanding the asymptotic and bounded dynamics of these models is strongly facilitated by a rigorous investigation of the flow. While rigorous computations of periodic orbits may avoid the necessity of computing portions of the flow (they can indeed be obtained with Fourier expansions in time \cite{MR3662023,MR3623202,MR3633778,navier-stokes}), computing solutions to boundary values problems or connecting orbits often require a time integration. 

Our approach goes as follows. Expand the solution $u(t,x)$ as a Fourier series in $x$ with time-dependent Fourier coefficients. Obtain an infinite system of nonlinear ordinary differential equations (ODEs) to be solved on a time interval $[0,h]$. Using the Fourier coefficients of the initial condition $u_0(x)$, reformulate the ODEs as rescaled Picard integral equations over the time interval $[-1,1]$. Expand the solution of the integral equations with a Chebyshev series expansion in time. Derive an equivalent zero finding problem of the form $F(a)=0$ (where $a=(a_{k,j})_{k,j}$ is an infinite two-index sequence of Fourier-Chebyshev coefficients) whose solution correspond to the solution of the Cauchy problem \eqref{eq:general_PDE}. The operator $F$ is defined on a weighed $\ell^1$ Banach space $X$ of Fourier-Chebyshev coefficients
\[
X =
\{ 
a = (a_{k,j})_{k,j} ~ :  ~ \|a\|_X = \sum_{k,j} |a_{k,j}| \omega_{k,j} < \infty
\}.
\]
The weights $\omega_{k,j}$ in the definition of the norm $\| \cdot \|_{X}$ are chosen so that (a) they have geometric growth in $k$ (ensuring analyticity of the solutions in space, see Section~\ref{sec:regularity_C0_error}); and (b) $X$ is a Banach algebra under discrete convolutions. Next, let $\cL \bydef DF(0)$ be the Fr\'echet derivative of $F$ at $0 \in X$ and prove that $\cL$ is an invertible operator on $X$ (see Section~\ref{sec:linear_operator}). Then prove that the operator $\cT(a) \bydef a - \cL^{-1} F(a)$ is a contraction on a closed ball $\ball{r}{\ba}$ of radius $r>0$ centered at a numerical approximation $\ba \in X$. To obtain a proof that the operator $\cT:\ball{r}{\ba} \to \ball{r}{\ba}$ is a contraction for some explicit $r>0$, use a Newton-Kantorovich type theorem (Theorem~\ref{thm:radPolyBanach}) (combining functional analytic estimates and interval arithmetic computations) and the fact that the {\em step size} $h>0$ can be taken small if necessary. 
An application of the contraction mapping theorem yields a unique solution $\ta \in \ball{r}{\ba}$ of $F=0$, which represents the solution of the Cauchy problem on the time interval $[0,h]$.  The explicit radius $r>0$ yields a rigorous $C^0$ error bound between the true solution of the Cauchy problem \eqref{eq:general_PDE} and its numerical approximation (see Section~\ref{sec:regularity_C0_error}).

The main challenge of this approach is theoretical: show that the operator $\cL$ is invertible on the Banach space $X$ and obtain explicit and computable bounds for the operator norm $\| \cL^{-1} \|_{B(X)}$. As described in Section~\ref{sec:set-up}, the operator $\cL = (\cL_k)_k$ is a block diagonal operator, where each block $\cL_k$ acts on the sequence of Chebyshev coefficients of the Fourier mode $a_k(t)$, and consists of the sum of an infinite-dimensional tridiagonal operator and a rank one operator. To show that $\cL$ is an invertible operator, we show that each block $\cL_k$ is invertible on the $\ell^1$ Banach space of Chebyshev sequences. For a finite number of blocks $\cL_k$ with $k$ small, we use that $\cL_k$ is diagonal dominant starting from a moderately low Chebyshev dimension $N=N(k)$ to construct (with computer-assistance) an explicit approximate inverse $A_k$ for $\cL_k$ (see Figure~\ref{fig:operators_cL_kand_A_k}) which is then used in a Neumann series argument to get a rigorous bound on $\| \cL_k^{-1}\|_{B(\ell^1)}$ (see Lemma~\ref{lem:bounding_invLk_small_k}). 
As the Fourier dimension, $k$ grows, the Chebyshev projection number $N=N(k)$ from which the operator $\cL_k$ is diagonal dominant goes to infinity, and therefore the approach for small $k$ is not readily applicable. Hence, we derive an alternative and analytic approach to obtain a uniform bound $\| \cL_k^{-1}\|_{B(\ell^1)}$ for large $k$ (see Section~\ref{sec:linear_operator}), which  is based on the explicit inverse tri-diagonal operator analytic formulas introduced in \cite{cyranka_mucha}. Combining the computer-assisted technique for small $k$ and the one for large $k$, we obtain a rigorous bound for $\| \cL^{-1} \|_{B(X_{\nu,1})}$. We remark that forfeiting the diagonal dominance of $\cL_k$ for large $k$ has been the main obstacle in deriving a  forward integration scheme for parabolic PDE via the Chebyshev series. To the best of our knowledge, we present a first successful purely spectral approach for forward integration of parabolic PDEs via Chebyshev series (this is in contrast with the approach \cite{TLJO,JLT_Schrodinger,MR4356641} which also uses Chebyshev series expansions in time but handles the contraction mapping theorem via the semi-group flow action). Moreover, we believe that our technique of constructing explicit norm bounds of $\cL_k$ operator for large $k$ is of independent interest.

The novelty of our approach is threefold. First, it introduces a computational framework to handle infinite-dimensional problems with operators having the property that the two off-diagonal entries of $\cL_k$ are unbounded as $k$ grows (in contrast, the approach of \cite{tridiagonal} handles problems with tridiagonal operators having unbounded off-diagonal entries, but the operators are still diagonal dominant). Second, once the bound on $\| \cL^{-1} \|_{B(X_{\nu,1})}$ is obtained, the approach is rather straightforward to implement, computationally inexpensive and readily applicable to different models of the form \eqref{eq:general_PDE}. We stress that simple and efficient implementation is highly desired from the perspective of verifying code correctness of computer-assisted proofs in dynamics, namely clean and verifiable implementation is more likely to be widely accepted by the community. Third, contrary to time-stepping schemes, like the forward integration method based on the Taylor expansion and the Lohner algorithm from \cite{MR2788972,chaos_KS}, extended in \cite{MR3167726}, our approach is not burdened with the stiffness issue coming from the appearance of large positive and negative eigenvalues of the linear operator spectrum. The approximation quality of the Chebyshev series allows stable and high accuracy numerics (see the applications in Section~\ref{sec:applications}, where the considered examples have many - sometimes large - unstable eigenvalues). 

We must nevertheless confess that our approach has some limitations. The most important one is that the class \eqref{eq:general_PDE} is restrictive since it does not contain models having derivatives in the nonlinearity. We believe (based on numerical experimentation) that there are models (e.g. the Kuramoto-Sivashinsky equation and the phase-field crystal (PFC) equation, where the order of the derivative in the nonlinearity is small compared to the order $2d$ of the linear part $L$) for which our approach could be generalized and applied to. However, we do not foresee for the moment how to adapt our method to models like Burgers' equation, the Cahn-Hilliard equation, or the Ohta-Kawasaki model. There are two less worrisome limitations: (a) in the current setting, large step sizes $h$ are only possible when the norm of the solution is small (see Remark~\ref{rem:max_step_size}); and (b) there is a rapid propagation of wrapping effect from one step to the next, which prevents our approach to be applied iteratively for a large number of steps (see Remark~\ref{rem:wrapping_effect}). We believe that the step size restriction and the wrapping effect limitation can be overcome by extending our method even further (see Section~\ref{sec:future}). This is the subject of future investigation, and we are convinced that further research will lead to the successful elimination of the mentioned limitations. The goal of the present paper is to propose an alternative technique for forward integration of parabolic PDEs, which has not yet been explored and is based on novel principles. A simple implementation of the presented technique already demonstrated interesting experimental results and should lead to a new line of research in computer-assisted proofs in dynamics.

The paper is organized as follows. In Section~\ref{sec:set-up}, we derive the zero finding problem $F(a)=0$ whose solution corresponds to the solution of the Cauchy problem and we introduce a Newton-Kantorovich type argument to compute rigorously solutions to $F=0$. We demonstrate that the space-time regularity of the solution follows from the proof and that the solution so-obtained is classical (strong). In Section~\ref{sec:linear_operator}, we introduce a technique to show that $\cL$ is invertible on $X$ and we obtain explicit and computable bounds for the operator norm $\| \cL^{-1} \|_{B(X)}$. The computer-assisted method to obtain a rigorous bound on $\| \cL_k^{-1}\|_{B(\ell^1)}$ for small $k$ is presented in Section~\ref{sec:bounds_cL_k_inv_small_k}, while in Section~\ref{sec:uniform_bounds_large_k}, we introduce the analytic approach to obtain a uniform bound $\| \cL_k^{-1}\|_{B(\ell^1)}$ for large $k$. In Section~\ref{sec:radii_polynomial_bounds}, we present the construction of the necessary bounds to apply the Newton-Kantorovich type argument.  In Section~\ref{sec:applications}, we apply our approach to Fisher's equation and the Swift-Hohenberg equation. We conclude the paper by discussing future directions. 

\section{General set-up and a Newton-Kantorovich type argument} \label{sec:set-up}

This section begins in Section~\ref{sec:F=0}, where the derivation of the zero finding problem $F(a)=0$ is presented. Some properties of the Banach space on which we solve $F=0$ are introduced, and an equivalent fixed point formulation of the problem is presented. In Section~\ref{sec:radii_polynomial_approach} we present a Newton-Kantorovich type argument (see Theorem~\ref{thm:radPolyBanach}) which we use to solve $F=0$. We end in Section~\ref{sec:regularity_C0_error} by showing that the solution obtained from the fixed point argument is classical (strong), and by showing how to get a rigorous $C^0$ error bound between the exact solution and the numerical approximation of the Cauchy problem.

\subsection{The problem formulation and the Banach space} \label{sec:F=0}

Consider the general PDE \eqref{eq:general_PDE}, which we supplement with even boundary conditions (i.e. $u(t, -x) = u(t, x)$), in which case we expand solutions using a 
cosine Fourier series
\begin{equation} 
\label{eq:cosine_Fourier_expansion}
u(t,x) = \ta_0(t) + 2 \sum_{k \ge 1} \ta_k(t) \cos(k x) = \sum_{k \in \Z} \ta_k(t) e^{\im k x},  \quad \text{with } \ta_{-k}(t) = \ta_k(t) \in \R.
\end{equation}

After plugging the Fourier series \eqref{eq:cosine_Fourier_expansion} in \eqref{eq:general_PDE} the model reduces to the infinite system of ordinary differential equations
\begin{equation}\label{eq:ODEs_general}
\frac{d\ta_k}{dt} = f_k(\ta) \bydef \lambda_k \ta_k + Q_k(\ta), \quad \text{for all } k \ge 0,
\end{equation}
where the {\em eigenvalues} $\lambda_k \bydef \sum_{\ell=0}^d \gamma_{2\ell} (-1)^\ell  k^{2 \ell} \in \R$, and 
$
Q_k(\ta) \bydef \sum_{j=2}^p q_j (\ta^j)_k,
$
with $(\ta^j)_k = (\ta*\ta*\dots*\ta)_k$ denoting the discrete convolution of order $j$ (see \eqref{eq:discrete_convolution} for the definition of the discrete convolution).
Finally, by assumption, the semi-linear PDE model \eqref{eq:general_PDE} is dissipative, and therefore $\lim_{k \to \infty} \lambda_k = -\infty$.

In this paper, we propose to compute solutions of the Cauchy  problem associated to \eqref{eq:general_PDE} on a given time interval $[0,h]$, where $h>0$. This naturally leads to study the initial value problem
\begin{equation} \label{eq:IVP_general}
\frac{d}{dt} \ta_k(t) = f_k(\ta(t)), \quad \text{ for } t \in [0,h] \text{ and } \ta_k(0) = b_k \text{ for all } k\ge 0,
\end{equation}
where the vector $b = (b_k)_k$ corresponds to the Fourier coefficients of the initial condition $u_0(x)=u(0,x)$.
We rescale time by the factor $h>0$ to map the interval $[0,h]$ to $[-1,1]$ (letting $\tau \bydef 2t/h-1$ and $a_k(\tau) \bydef \tilde{a}_k(t) = \tilde{a}_k(\frac{h}{2}(\tau+1))$) so that 
\begin{align} \label{eq:rescale_ODEs}
\frac{d }{d \tau}a_k(\tau) &= \frac{h}{2} f_k(a(\tau)), \quad \text{ for } \tau \in [-1,1] \text{ and } a_k(-1) = b_k \text{ for all } k\ge 0.
\end{align}
Rewriting the system \eqref{eq:rescale_ODEs} as an integral equation results in
\begin{equation} \label{eq:integral_equation_general}
a_k(\tau) = b_k + \frac{h}{2} \int_{-1}^\tau f_k(a(s))~ds, \qquad k \geq 0, \quad \tau \in [-1,1].
\end{equation}
For each $k$, we expand $a_k(\tau)$ using a Chebyshev series, that is 
\begin{equation} \label{eq:a_k_Chebyshev_expansion}
a_k(\tau) = a_{k,0} + 2 \sum_{j \ge 1} a_{k,j} T_j(\tau) = a_{k,0} + 2 \sum_{j \ge 1} a_{k,j} \cos(j \theta) 
= \sum_{j \in \Z} a_{k,j} e^{\im j \theta}
= \sum_{j \in \Z} a_{k,j} T_j(\tau),
\end{equation}
where $a_{k,-j} = a_{k,j}$, $\tau = \cos(\theta)$ and $T_{-j}(\tau) \bydef T_j(\tau)$. The cosine Fourier expansion \eqref{eq:cosine_Fourier_expansion} becomes
\begin{equation} \label{eq:cosine_Fourier_Chebyshev_expansion}
u(\tau,x) =  \sum_{k,j \in \Z}  a_{k,j} e^{\im j \theta} e^{\im k x}, \quad a_{k,-j} = a_{k,j} \text{ and } a_{-k,j}=a_{k,j}.
\end{equation}
For each $k \ge 0$, we expand $f_k(a(\tau))$ using a Chebyshev series, that is 
\begin{equation} \label{eq:f_k(a)_Chebyshev_expansion}
f_k(a(\tau))
 = \phi_{k,0}(a)  + 2 \sum_{j \ge 1} \phi_{k,j}(a) \cos(j \theta) 
 =  \sum_{j \in \Z} \phi_{k,j}(a) e^{\im j \theta}=  \sum_{j \in \Z} \phi_{k,j}(a) T_j(\tau),
\end{equation}
where
\[
\phi_{k,j}(a) = \lambda_k a_{k,j} + Q_{k,j}(a).
\]
Letting $Q_k(a) \bydef (Q_{k,j}(a))_{j \ge 0}$, $\phi_k(a) \bydef (\phi_{k,j}(a))_{j \ge 0}$ and noting that $(\lambda_k a_k)_j = \lambda_k a_{k,j}$, we get that
\begin{equation} \label{eq:phi_expansion_general}
\phi_k(a) = \lambda_k a_k + Q_k(a).
\end{equation}

Combining \eqref{eq:integral_equation_general}, \eqref{eq:a_k_Chebyshev_expansion} and \eqref{eq:f_k(a)_Chebyshev_expansion} leads to %
\[
 \sum_{j \in \Z} a_{k,j} T_j(\tau) = a_k(\tau) = b_k + \frac{h}{2} \int_{-1}^\tau f_k(a(s))~ds =  b_k + \frac{h}{2} \int_{-1}^\tau \sum_{j \in \Z} \phi_{k,j}(a) T_j(s)~ds 
\]
and this results (e.g. see in \cite{MR3148084}) in solving $F=0$, where 
$F = \left(F_{k,j} \right)_{k,j}$ is given component-wise by
\[
F_{k,j}(a) = 
\begin{cases}
\displaystyle
 a_{k,0} + 2 \sum_{\ell=1}^\infty (-1)^\ell a_{k,\ell} - b_k, & j=0, k \ge 0 \\
\displaystyle
2j a_{k,j} + \frac{h}{2} ( \phi_{k,j+1}(a) - \phi_{k,j-1}(a)), & j>0,k \ge 0.
\end{cases}
\]
Hence, for $j>0$ and $k \ge 0$, we aim at solving
\[
F_{k,j}(a)  = 
2j a_{k,j} + \frac{h}{2} \lambda_k ( a_{k,j+1} - a_{k,j-1})
+ \frac{h}{2} ( Q_{k,j+1}(a) - Q_{k,j-1}(a)) = 0.
\]
Finally, the problem that we solve is $F=0$, where 
$F = \left(F_{k,j} \right)_{k,j}$ is given component-wise by
\begin{equation} \label{eq:F_{k,j}}
F_{k,j}(a) \bydef 
\begin{cases}
\displaystyle
 a_{k,0} + 2 \sum_{\ell=1}^\infty (-1)^\ell a_{k,\ell} - b_k, & j=0, k \ge 0 \\
\displaystyle
-\frac{h \lambda_k}{2} a_{k,j-1} + 2j a_{k,j} + \frac{h \lambda_k}{2} a_{k,j+1}
+ \frac{h}{2} (Q_{k,j+1}(a) - Q_{k,j-1}(a)), & j>0,k \ge 0.
\end{cases}
\end{equation}

Define the linear operator $\cL$ by 
\begin{equation} \label{eq:cL_{k,j}}
\cL_{k,j}(a) \bydef
\begin{cases}
\displaystyle
 a_{k,0} + 2 \sum_{\ell=1}^\infty (-1)^\ell a_{k,\ell}  , & j=0, ~ k \ge 0 \\
\displaystyle 
\mu_k a_{k,j-1} + 2 j a_{k,j} - \mu_k a_{k,j+1} , & j>0, ~ k \ge 0,
\end{cases}
\end{equation}
where 
\begin{equation} \label{eq:mu_k}
\mu_k \bydef - \frac{h}{2} \lambda_k.
\end{equation}
Note that $\lim_{k \to \infty} \mu_k = \infty$, and that $\mu_k>0$ except perhaps for a finite number of indices $k$. 
For a fixed Fourier component $k \ge 0$, the operator $\cL_k$ acts on $a_k \bydef (a_{k,j})_{j \ge 0}$ and 
can be visualized as 
\begin{equation} \label{eq:linear_operators}
\cL_{k} \bydef \begin{pmatrix}
1&-2&2&-2&2&\cdots \vspace{.1cm}
\\
\eigk&  2 & -\eigk&0&\cdots&\ \vspace{.1cm}
\\
0& \eigk& 4 & -\eigk&0&\cdots\\
\ &\ddots&\ddots&\ddots&\ddots&\ddots \vspace{.1cm} \\
\ &\dots&0& \eigk& 2j & -\eigk\\
\ &\ &\dots&\ddots\ddots& \ddots
\end{pmatrix}.
\end{equation}
Define the nonlinear operator $\cQ$ by 
\begin{equation} \label{eq:cQ_{k,j}}
\cQ_{k,j}(a) \bydef
\begin{cases}
\displaystyle
-b_k , & j=0, ~ k \ge 0 \\
\displaystyle  \frac{h}{2} \left(Q_{k,j+1}(a) - Q_{k,j-1}(a)  \right) , & j>0, ~ k \ge 0.
\end{cases}
\end{equation}
Setting
\begin{equation} \label{eq:tridiagonal_Lambda}
\Lambda \bydef
\begin{pmatrix} 
0&0&0&0&0&\cdots\\
-1&0&1&0&\cdots&\ \\
0&-1&0&1&0&\cdots\\
\ &\ddots&\ddots&\ddots&\ddots&\ddots\\
\ &\dots&0&-1&0&1 \\
\ &\ &\dots&\ddots&\ddots&\ddots
\end{pmatrix},
\end{equation}
we may write more densely the nonlinear part $\cQ_{k}$ as 
\[
\cQ_{k}(a) = -b_k + \frac{h}{2} \Lambda Q_k(a), \quad Q_k(a) \bydef (Q_{k,j}(a))_{j \ge 0}.
\]
Given a fixed Fourier mode $k \ge 0$, the formulation for $F$ in \eqref{eq:F_{k,j}} may be more densely written as
\begin{align*}
F_k(a) & = \cL_k a_k + \cQ_{k}(a) 
\\
& =  \cL_k a_k - b_k + \frac{h}{2} \Lambda Q_k(a),
\end{align*}
where $F_k(a) \bydef (F_{k,j}(a))_{j \ge 0}$. Finding $a$ such that $F_k(a)=0$ is equivalent (provided that the operator $\cL_k$ is invertible) to find a solution (fixed point) of 
\[
\cT_k(a) \bydef \cL_k^{-1} \left( b_{k} -  \frac{h}{2} \Lambda Q_k(a) \right) = a_k, \quad \text{for all } k \ge 0.
\]
Let us introduce the two block diagonal operators
\begin{equation} \label{eq:blockdiag}
\cL \bydef 
\begin{pmatrix}
\cL_{0}&0&\dots&0\\0&\ddots&0&\dots\\\ &\ddots&\cL_k&\ \\0&\dots&0&\ddots
\end{pmatrix}
\quad 
\text{and}
\quad
  \bL \bydef
  \begin{pmatrix}
    \Lambda&0&0&\dots\\
    0&\ddots&0&\dots\\
    \ &\ddots&\Lambda&\ \\
    \dots&0&0&\ddots
  \end{pmatrix}.
\end{equation}
Given $k \ge 0$, denote $a_k = (a_{k,j})_{j \ge 0}$. Denoting $a=(a_{0},\dots,a_k,\dots)$, we obtain
\[
\cL a = 
\begin{pmatrix} \cL_{0} a_{0} \\ \vdots \\ \cL_{k} a_k \\ \vdots \end{pmatrix}
\quad \text{and} \quad
\bL a = \begin{pmatrix} \Lambda a_{0} \\ \vdots \\ \Lambda a_k \\ \vdots \end{pmatrix}.
\]
We can finally write the map $F$ as 
\begin{equation} \label{eq:F(a)=0}
F(a) = \cL a - b + \frac{h}{2} \bL \cQ(a), 
\end{equation}
where it is understood that $b_k = (b_{k,j})_{j \ge 0}$ with $b_{k,j}=0$ for all $j>0$. 

The strategy we employ to prove existence of zeros of $F$, namely the Newton-Kantorovich type theorem presented in Section~\ref{sec:radii_polynomial_approach}, assumes that the map $F$ is Fr\'echet differentiable. This hypothesis is verified since the nonlinear term $Q$ in \eqref{eq:general_PDE} is assumed to be a polynomial in $u$.  

The Banach space $X=X_{\nu,1}$ in which we look for the zeros of $F$ is given by
\begin{equation} \label{eq:Banach_space}
X_{\nu,1} \bydef 
\left\{ 
a = (a_{k,j})_{k,j \ge 0} ~ :  ~ \|a\|_{X_{\nu,1}} \bydef \sum_{k,j \ge 0} |a_{k,j}| \omega_{k,j} < \infty
\right\},
\end{equation}
where $\nu \ge 1$ and
\begin{equation} \label{eq:norm_weights}
\omega_{k,j} \bydef 
\begin{cases}
1, & k=j=0 \\
2, & k=0, j>0 \\
2 \nu^{k}, & k > 0,j=0 \\
4 \nu^{k}, &k,j > 0.
\end{cases}
\end{equation}
The choice of the weights \eqref{eq:norm_weights} is to ensure that $X_{\nu,1}$ is a Banach algebra under discrete convolution, that is
\begin{equation} \label{eq:Banach_algebra}
\| a * b \|_{X_{\nu,1}} \le \| a \|_{X_{\nu,1}}  \| b \|_{X_{\nu,1}} 
\end{equation}
for all $a,b \in X_{\nu,1}$, where the discrete convolution of $a$ and $b$ is given by
\begin{equation} \label{eq:discrete_convolution}
(a * b)_{k,j} = \sum_{{k_1+k_2 = k \atop j_1+j_2=j} \atop k_i,j_i \in \Z} a_{k_1,|j_1|} b_{k_2,|j_2|},
\end{equation}
using the symmetries $a_{-k,j} = a_{k,j}$, $b_{-k,j} = b_{k,j}$ coming from the cosine Fourier expansion in space.

%
If the linear operator $\cL$ is invertible on $X_{\nu,1}$ (see Section~\ref{sec:linear_operator}), we may define the fixed point operator as 
\begin{equation} \label{eq:T(a)}
\cT(a) \bydef a - \cL^{-1} F(a) = \cL^{-1} \left( b - \frac{h}{2} \bL \cQ(a) \right).
\end{equation}

Denote by $\ell^1$ the Banach space 
\begin{equation} \label{eq:ell_one_norm}
\ell^1 \bydef \left\{ y= (y_j)_{j \ge 0} : \| y \|_{\ell^1} \bydef |y_0| + 2 \sum_{j \ge 1} |y_j|  < \infty \right\}.
\end{equation}
We can re-write
\[
\|a\|_{X_{\nu,1}} = \| a_0\|_{\ell^1} + 2 \sum_{k \ge 1} \|a_k \|_{\ell^1} \nu^k, \qquad a_k \bydef (a_{k,j})_{j \ge 0}.
\]
%


Having presented the problem formulations and the Banach space, we are ready to introduce a Newton-Kantorovich type theorem (sometimes called the radii polynomial approach) to prove the existence of fixed points of $\cT$ in $X_{\nu,1}$.

\subsection{A Newton-Kantorovich type theorem} \label{sec:radii_polynomial_approach}

Recall the map $F$ given by \eqref{eq:F(a)=0}, assume that $\cL:X_{\nu,1} \to X_{\nu,1}$ is invertible (see Section~\ref{sec:linear_operator}) and recall the fixed point operator $\cT$ given in \eqref{eq:T(a)}. 
Since $Q(u)$ in \eqref{eq:general_PDE} is polynomial, the map $F$ is Fr\'echet differentiable and therefore the map $\cT:X_{\nu,1} \to X_{\nu,1}$ is Fr\'echet differentiable. Denote by $D_a \cT(c)$ the Fr\'echet derivative of $\cT$ at a point $c\in X_{\nu,1}$.
Assume that a numerical approximation $\ba$ such that $\| F(\ba)\|_{X_{\nu,1}} \ll 1$ has been computed. Denote by
\[
\ball{r}{\ba} \bydef \left\{a \in X_{\nu,1}  ~:~  \| a - \ba \|_{X_{\nu,1}} \leq r \right\}
\]
the closed ball of radius $r>0$ centered in $\ba$ in $X_{\nu,1}$. Denote by $B(X_{\nu,1})$ the space of bounded linear operators on $X_{\nu,1}$ and $\| \cdot \|_{B(X_{\nu,1})}$ the induced operator norm.

\begin{theorem} \label{thm:radPolyBanach}
Let $Y$ and $Z=Z(r)$ be bounds satisfying
\begin{align}
\label{eq:Y_radPolyBanach}
\| \cT(\ba) - \ba \|_{X_{\nu,1}} &\le Y
\\
\label{eq:Z_radPolyBanach}
\sup_{c \in \ball{r}{\ba}} \| D_a \cT(c) \|_{B(X_{\nu,1})}  &\le Z(r).
\end{align}
Define the {\em radii polynomial}
\begin{equation} \label{eq:radPolyBanach}
p(r) \bydef r (Z(r)-1) + Y .
\end{equation}
If there exists $r_0>0$ such that
\[
p(r_0) < 0,
\]
then there exists a unique $\ha \in \ball{r_0}{\ba}$ satisfying $F(\ha) = 0$.
\end{theorem}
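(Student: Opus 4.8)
�The plan is to prove this as a standard application of the Banach fixed point theorem, using the radii polynomial $p(r)$ to certify that $\cT$ is a self-map and a contraction on $\ball{r_0}{\ba}$. The key observation is that since $\cL$ is invertible, a point $\ha$ satisfies $F(\ha)=0$ if and only if $\ha$ is a fixed point of $\cT(a) = a - \cL^{-1}F(a)$; hence existence and uniqueness of a zero of $F$ in the ball is equivalent to existence and uniqueness of a fixed point of $\cT$ there.

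\begin{proof}
Since $\cL:X_{\nu,1}\to X_{\nu,1}$ is invertible, for $a \in X_{\nu,1}$ we have $F(a)=0$ if and only if $\cT(a)=a$. It therefore suffices to show that $\cT$ has a unique fixed point in $\ball{r_0}{\ba}$.

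First I would show that $\cT$ maps $\ball{r_0}{\ba}$ into itself. Let $a \in \ball{r_0}{\ba}$. Since $X_{\nu,1}$ is convex, the segment joining $\ba$ to $a$ lies in $\ball{r_0}{\ba}$, and since $\cT$ is Fr\'echet differentiable, the mean value inequality gives
\[
\| \cT(a) - \cT(\ba) \|_{X_{\nu,1}} \le \left( \sup_{c \in \ball{r_0}{\ba}} \| D_a\cT(c) \|_{B(X_{\nu,1})} \right) \| a - \ba \|_{X_{\nu,1}} \le Z(r_0)\, r_0,
\]
using \eqref{eq:Z_radPolyBanach}. Combining this with the bound \eqref{eq:Y_radPolyBanach} and the triangle inequality,
\[
\| \cT(a) - \ba \|_{X_{\nu,1}} \le \| \cT(a) - \cT(\ba) \|_{X_{\nu,1}} + \| \cT(\ba) - \ba \|_{X_{\nu,1}} \le Z(r_0)\, r_0 + Y = p(r_0) + r_0 < r_0,
\]
since $p(r_0)<0$ by hypothesis. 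Hence $\cT(a) \in \ball{r_0}{\ba}$, so $\cT:\ball{r_0}{\ba}\to \ball{r_0}{\ba}$.

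Next I would show $\cT$ is a contraction on $\ball{r_0}{\ba}$. For $a_1,a_2 \in \ball{r_0}{\ba}$, the same mean value inequality along the segment from $a_1$ to $a_2$ (which lies in the ball by convexity) yields
\[
\| \cT(a_1) - \cT(a_2) \|_{X_{\nu,1}} \le Z(r_0)\, \| a_1 - a_2 \|_{X_{\nu,1}}.
\]
From $p(r_0) = r_0(Z(r_0)-1) + Y < 0$ and $Y \ge 0$, $r_0>0$, we get $r_0(Z(r_0)-1) < -Y \le 0$, hence $Z(r_0) < 1$. Thus $\cT$ is a contraction with constant $Z(r_0)<1$ on the complete metric space $\ball{r_0}{\ba}$ (a closed subset of the Banach space $X_{\nu,1}$). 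By the Banach fixed point theorem, $\cT$ has a unique fixed point $\ha \in \ball{r_0}{\ba}$, which by the equivalence above is the unique solution of $F(\ha)=0$ in $\ball{r_0}{\ba}$.
\end{proof}

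The only mildly delicate point is justifying the mean value inequality in a Banach space: this follows from the fundamental theorem of calculus applied to $t \mapsto \cT(\ba + t(a-\ba))$ together with the bound on $\|D_a\cT\|_{B(X_{\nu,1})}$ over the (convex) ball, so there is no real obstacle here; the genuine work of the paper lies entirely in producing the invertibility of $\cL$ and the explicit bounds $Y$ and $Z(r)$, not in this theorem.The plan is to prove this as a standard application of the Banach fixed point theorem, using the radii polynomial $p(r)$ to certify that $\cT$ is a self-map and a contraction on $\ball{r_0}{\ba}$. The key observation is that since $\cL$ is invertible, a point $\ha$ satisfies $F(\ha)=0$ if and only if $\ha$ is a fixed point of $\cT(a) = a - \cL^{-1}F(a)$; hence existence and uniqueness of a zero of $F$ in the ball is equivalent to existence and uniqueness of a fixed point of $\cT$ there.

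\begin{proof}
Since $\cL:X_{\nu,1}\to X_{\nu,1}$ is invertible, for $a \in X_{\nu,1}$ we have $F(a)=0$ if and only if $\cT(a)=a$. It therefore suffices to show that $\cT$ has a unique fixed point in $\ball{r_0}{\ba}$.

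First I would show that $\cT$ maps $\ball{r_0}{\ba}$ into itself. Let $a \in \ball{r_0}{\ba}$. Since $X_{\nu,1}$ is convex, the segment joining $\ba$ to $a$ lies in $\ball{r_0}{\ba}$, and since $\cT$ is Fr\'echet differentiable, the mean value inequality gives
\[
\| \cT(a) - \cT(\ba) \|_{X_{\nu,1}} \le \left( \sup_{c \in \ball{r_0}{\ba}} \| D_a\cT(c) \|_{B(X_{\nu,1})} \right) \| a - \ba \|_{X_{\nu,1}} \le Z(r_0)\, r_0,
\]
using \eqref{eq:Z_radPolyBanach}. Combining this with the bound \eqref{eq:Y_radPolyBanach} and the triangle inequality,
\[
\| \cT(a) - \ba \|_{X_{\nu,1}} \le \| \cT(a) - \cT(\ba) \|_{X_{\nu,1}} + \| \cT(\ba) - \ba \|_{X_{\nu,1}} \le Z(r_0)\, r_0 + Y = p(r_0) + r_0 < r_0,
\]
since $p(r_0)<0$ by hypothesis. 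Hence $\cT(a) \in \ball{r_0}{\ba}$, so $\cT:\ball{r_0}{\ba}\to \ball{r_0}{\ba}$.

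Next I would show $\cT$ is a contraction on $\ball{r_0}{\ba}$. For $a_1,a_2 \in \ball{r_0}{\ba}$, the same mean value inequality along the segment from $a_1$ to $a_2$ (which lies in the ball by convexity) yields
\[
\| \cT(a_1) - \cT(a_2) \|_{X_{\nu,1}} \le Z(r_0)\, \| a_1 - a_2 \|_{X_{\nu,1}}.
\]
From $p(r_0) = r_0(Z(r_0)-1) + Y < 0$ and $Y \ge 0$, $r_0>0$, we get $r_0(Z(r_0)-1) < -Y \le 0$, hence $Z(r_0) < 1$. Thus $\cT$ is a contraction with constant $Z(r_0)<1$ on the complete metric space $\ball{r_0}{\ba}$ (a closed subset of the Banach space $X_{\nu,1}$). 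By the Banach fixed point theorem, $\cT$ has a unique fixed point $\ha \in \ball{r_0}{\ba}$, which by the equivalence above is the unique solution of $F(\ha)=0$ in $\ball{r_0}{\ba}$.
\end{proof}

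The only mildly delicate point is justifying the mean value inequality in a Banach space: this follows from the fundamental theorem of calculus applied to $t \mapsto \cT(\ba + t(a-\ba))$ together with the bound on $\|D_a\cT\|_{B(X_{\nu,1})}$ over the (convex) ball, so there is no real obstacle here; the genuine work of the paper lies entirely in producing the invertibility of $\cL$ and the explicit bounds $Y$ and $Z(r)$, not in this theorem.
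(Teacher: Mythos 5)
Your proof is correct and follows essentially the same route as the paper's: apply the mean value inequality to show $\cT$ is a self-map of the ball and a contraction with constant $Z(r_0)<1$, then invoke the Banach fixed point theorem and the equivalence $F(\ha)=0 \iff \cT(\ha)=\ha$ via invertibility of $\cL$. The only cosmetic difference is that you make explicit that $Y\ge 0$ (since $Y$ bounds a norm) when deducing $Z(r_0)<1$, whereas the paper writes this as $Z(r_0)<1-Y/r_0\le 1$; substantively the two arguments coincide.
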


\begin{proof}
The idea is to show that $\cT$ is a contraction mapping of $\ball{r_0}{\ba}$ into itself, in which case the result follows from the contraction mapping theorem.

Let  $a \in \ball{r_0}{\ba}$ and apply the Mean Value Inequality to obtain  
\begin{align*}
\| \cT(a) - \ba \|_{X_{\nu,1}} & \leq \|\cT(a) - \cT(\ba) \|_{X_{\nu,1}}+ \| \cT(\ba) - \ba\|_{X_{\nu,1}} \\
&\leq \sup_{c \in \ball{r_0}{\ba}} \| D_a \cT(c)\|_{B(X_{\nu,1})} \| a - \ba\|_{X_{\nu,1}} + Y \\
&\leq r_0 Z(r_0) + Y,
\end{align*}
where the last inequality follows from \eqref{eq:Y_radPolyBanach} and \eqref{eq:Z_radPolyBanach}.
Using that $p(r_0) < 0$ implies that $\| \cT(a) - \ba \|_{X_{\nu,1}} < r_0$ and therefore that $\cT:\ball{r_0}{\ba} \to \ball{r_0}{\ba}$.

To see that $\cT$ is a contraction on $\ball{r_0}{\ba}$, let $c_1,c_2 \in \ball{r_0}{\ba}$ and see that
\begin{align*}
\| \cT(c_1) - \cT(c_2)\|_{X_{\nu,1}} & \leq \sup_{c \in \ball{r_0}{\ba}} \| D_a \cT(c)\|_{B(X_{\nu,1})} \| c_1 - c_2 \|_{X_{\nu,1}} \\
&\leq  Z(r_0) \| c_1 - c_2 \|_{X_{\nu,1}}.
\end{align*} 
Again, from the assumption that $p(r_0) < 0$ (that is $r_0 Z(r_0) + Y < r_0$), it follows that
\[
Z(r_0) < 1-  \frac{Y}{r_0} \le 1.
\]
Hence $\cT \colon \ball{r_0}{\ba} \to \ball{r_0}{\ba}$ is a 
contraction with contraction constant $Z(r_0)<1$. 
The contraction mapping theorem yields the existence of a unique $\ha \in \ball{r_0}{\ba}$ such that $\cT(\ha) = \ha - \cL^{-1} F(\ha)=\ha$. Since $\cL$ is invertible, $\cL^{-1}$ is invertible and this implies that $\ha$ is the unique element of $\ball{r_0}{\ba}$ satisfying $F(\ha)=0$.
\end{proof}

In Section~\ref{sec:radii_polynomial_bounds}, we construct explicitly the bounds necessary to apply the (radii polynomial) approach of Theorem~\ref{thm:radPolyBanach}. 

We conclude this section by introducing two consequences of a successful application of Theorem~\ref{thm:radPolyBanach}. The first one is that we get enough space and time regularity of the solution so that we obtain a classical solution to the Cauchy problem. The second one is that the radius $r_0>0$ such that $p(r_0)<0$ provides in fact a rigorous $C^0$ error control between the exact solution and a numerical approximation of the Cauchy problem.

\subsection{Regularity of the solutions and rigorous \boldmath$C^0$~\unboldmath error control} \label{sec:regularity_C0_error}

Assume that we applied the (radii polynomial) approach of Theorem~\ref{thm:radPolyBanach} to prove the existence of $\ha \in X_{\nu,1}$ such that $F(\ha)=0$ and $\| \ha - \ba \|_{X_{\nu,1}} \le r_0$ where $F$ is defined in \eqref{eq:F(a)=0}, $\nu \ge 1$ and $\ba$ is a numerical approximation. This is done by verifying that $r_0>0$ satisfies $p(r_0)<0$. Denote by
\begin{equation} \label{eq:tu}
\hu(\tau,x) \bydef
\sum_{k,j \in \Z}  \ha_{k,j} e^{\im j \theta} e^{\im k x}, \quad  \theta = \cos^{-1}(\tau), ~~ \ha_{k,-j} = \ha_{k,j} \text{ and } \ha_{-k,j}=\ha_{k,j}\end{equation}
the corresponding Fourier-Chebyshev expansion.

If $\nu>1$, then for each $k>0$,
$2 \|\ha_k \|_{\ell^1} \nu^k \le 
\| \ha_0\|_{\ell^1} + 2 \sum_{k \ge 1} \|\ha_k \|_{\ell^1} \nu^k = \| \ha \|_{X_{\nu,1}} < \infty$,
and therefore
\[
\|\ha_k \|_{\ell^1} \le \frac{\| \ha \|_{X_{\nu,1}}}{2 \nu^{k}}, \quad \text{for all } k>0,
\]
which has a geometric decay rate. Hence, $\hu$ is analytic in space and has enough spatial derivatives to be evaluated in the PDE model \eqref{eq:general_PDE}. If $\nu=1$, then by continuity of the bounds $Y$ and $Z(r)$ in the decay rate $\nu$, there exists $\epsilon>0$ such that $p(r)<0$ for some $\tilde \nu = 1 + \epsilon>1$, and therefore we are back to the previous case and space regularity follows (for a similar and more detailed argument, see Proposition 3 in \cite{MR3454370}). As for the time regularity, it follows from the fact that for each $k$, $\ha_k(\tau)$ is continuous in $\tau$ and solves the Picard integral equation \eqref{eq:integral_equation_general}. By continuity of $f_k$, $f_k(\ha(\tau))$ is continuous and therefore $\int_{-1}^\tau f_k(\ha(s))~ds$ is differentiable and therefore $\ha_k$ is differentiable in time. This follows that the resulting Fourier-Chebyshev expansion $\hu(\tau,x)$ given in \eqref{eq:tu} is a classical (strong) solution of \eqref{eq:general_PDE}.

Finally, denote by 
\[
\bar u(\tau,x) \bydef
\sum_{k,j \in \Z}  \ba_{k,j} e^{\im j \theta} e^{\im k x}, \quad \ba_{k,-j} = \ba_{k,j} \text{ and } \ba_{-k,j}=\ba_{k,j}
\]
the corresponding numerical approximate Fourier-Chebyshev expansion of the Cauchy problem. Then,
\begin{align} 
\nonumber
\| \hu - \bar u \|_{C^0} &\bydef
\sup_{\tau \in [-1,1] \atop x \in [0,2\pi]} |\hu(\tau,x) - \bar u(\tau,x)| \\
& \le \sum_{k,j \in \Z} |\ha_{k,j} - \ba_{k,j}| \left| e^{\im j \theta} e^{\im k x} \right| 
\nonumber
\\
\nonumber
& \le  \sum_{k \ge 0} \sum_{j \ge 0} |\ha_{k,j} - \ba_{k,j}| \omega_{k,j} \\
& = \| \ha - \ba \|_{X_{\nu,1}} \le r_0.
\label{eq:C0_error_bound}
\end{align}
This shows that the radius $r_0>0$ such that $p(r_0)<0$ (from the radii polynomial approach) provides in fact a rigorous $C^0$ error control between the exact solution $\hu$ and the numerical approximation $\bar u$ of the Cauchy problem.

We are now ready to introduce the theory to show that $\cL$ is invertible, to obtain rigorous estimates on $\| \cL_k^{-1}\|_{B(\ell^1)}$ for all $k \ge 0$ and finally to derive an explicit and computable bound for $\| \cL^{-1}\|_{B(X_{\nu,1})}$. 

\section{Analysis of the linear operator \boldmath$\cL$\unboldmath} \label{sec:linear_operator}

In this section, we introduce our approach to prove that the operator $\cL$ is invertible on $X_{\nu,1}$ and we obtain explicit and computable bounds for the operator norm $\| \cL^{-1} \|_{B(X_{\nu,1})}$.
In Section~\ref{sec:bounds_cL_k_inv_small_k}, we consider small $k$ and use that $\cL_k$ is diagonal dominant starting from a moderately low Chebyshev dimension $N=N(k)$ to construct (with computer-assistance) an explicit approximate inverse. The approximate inverse is used in a Neumann series argument to obtain a rigorous bound on $\| \cL_k^{-1}\|_{B(\ell^1)}$. Then in Section~\ref{sec:uniform_bounds_large_k}, we introduce an approach to obtain a uniform bound $\| \cL_k^{-1}\|_{B(\ell^1)}$ for large $k$. The approach here is also computer-assisted, utilizing both the numerical and symbolic computation. It is based on the explicit inverse tri-diagonal operator analytic formulas introduced in \cite{cyranka_mucha}. Combining the computer-assisted technique for small $k$ and the one for large $k$, we introduce in Section~\ref{sec:bound_cL_inv} a bound for $\| \cL^{-1} \|_{B(X_{\nu,1})}$.

\subsection{Bounds for \boldmath$\|\cL_k^{-1}\|_{B(\ell^1)}$~\unboldmath for small \boldmath$k$\unboldmath } \label{sec:bounds_cL_k_inv_small_k}

We begin this section by introducing some operators. Fix an even number $N$, and denote the operators $\tilde \Lambda$ and $\Omega$ acting on the tail of a Chebyshev sequence as 
\[
\tilde \Lambda \bydef
\begin{pmatrix} 
0&-1&0&\cdots&\ \\
1&0&-1&0&\cdots\\
0&1&0&-1&\cdots\\
&\ddots&\ddots&\ddots&\ddots&\ddots\\
&\dots&0&1&0&-1 \\
&\ &\dots&\ddots&\ddots&\ddots
\end{pmatrix}
\quad \text{and} \quad
\Omega \bydef
\begin{pmatrix} 
2(N+1)& 0 &0&\cdots  \\
0&2(N+2)&0&0 \\
0&0&2(N+3)&0 \\
\vdots &\vdots& &\ddots&\ddots  \\
\end{pmatrix}.
\]
Using the above {\em tail} operators, rewrite the operator $\cL_k$ in \eqref{eq:linear_operators} as in Figure~\ref{fig:operators_cL_kand_A_k}, where $\cL_k^{(N)} \in M_{N+1}(\R)$ is the matrix consisting of the first $(N+1) \times (N+1)$ entries of $\cL_k$.
Denote the infinite dimensional row vector $v \bydef \begin{pmatrix} -2&2&-2&2&-2&2& \cdots & \end{pmatrix}$, which is the tail of the first row of the operator $\cL_k$. 
Given a matrix $B$, denote by $B_{col(k)}$ the $k^{th}$ column of $B$. Similarly, $B_{row(k)}$ denotes the $k^{th}$ row of $B$. Define the operator $A_k$ (which acts as an approximate inverse for $\cL_k$) as given in Figure~\ref{fig:operators_cL_kand_A_k}.
\begin{figure}[h!]
\begin{center}
\includegraphics[width=15cm]{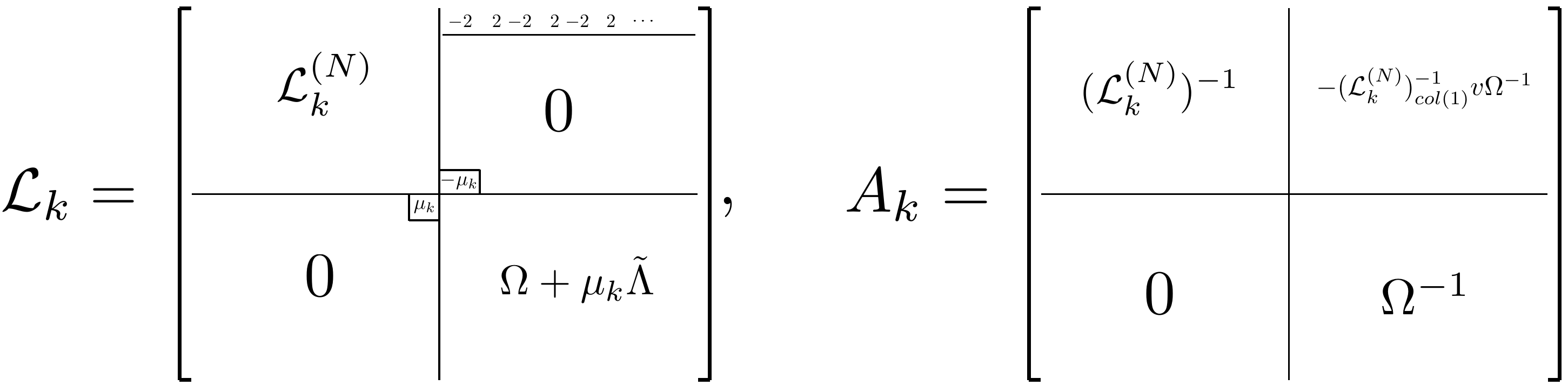}
\end{center}
\vspace{-.3cm}
\caption{The operator $\cL_k$ and its approximate inverse operator $A_k$.}
\label{fig:operators_cL_kand_A_k}
\end{figure}

Note that the choice of taking $N$ even comes from the fact that we want the first entry of the first row outside of $\cL_k^{(N)}$ to start with a $-2$. In other words, choosing $N$ even allows us to define the vector $v$ with a $-2$ as its first entry.

The following lemma provides an upper bound  for the operator norm in $B(\ell^1)$.

\begin{lemma} \label{lem:op_ell_one_norm}
Recall the definition of the $\ell^1$ norm in \eqref{eq:ell_one_norm}. Let $C=(c_{i,n})_{i,n \ge 0} \in B(\ell^1)$ and denote by $c_n$ the $n^{th}$ column of $C$, that is $c_n = (c_{i,n})_{i \ge 0}$. 
Then, 
\[
\| C\|_{B(\ell^1)} = \max \left\{\|c_n\|_{\ell^1}, \sup_{n \ge 1} \frac{1}{2} \|c_n\|_{\ell^1} \right\}.
\]
\end{lemma}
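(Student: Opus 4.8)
The plan is to compute the operator norm of $C$ acting on $\ell^1$ directly from the definition, exploiting the particular (weighted) form of the $\ell^1$ norm in \eqref{eq:ell_one_norm}. Recall that $\|y\|_{\ell^1} = |y_0| + 2\sum_{j\ge 1}|y_j|$, which is an honest weighted $\ell^1$ norm with weight $\alpha_0=1$ and $\alpha_j=2$ for $j\ge 1$. For such weighted $\ell^1$ spaces, the operator norm of a matrix is the supremum over columns of the weighted $\ell^1$ norm of the column, rescaled by the reciprocal of that column's input weight. Concretely, testing $C$ on the standard basis vector $e_n$ gives $\|C e_n\|_{\ell^1} = \|c_n\|_{\ell^1}$, while $\|e_n\|_{\ell^1} = \alpha_n$ (equal to $1$ if $n=0$ and $2$ if $n\ge 1$), so the ratio $\|Ce_n\|_{\ell^1}/\|e_n\|_{\ell^1}$ equals $\|c_0\|_{\ell^1}$ when $n=0$ and $\tfrac12\|c_n\|_{\ell^1}$ when $n\ge 1$. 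This immediately yields the lower bound $\|C\|_{B(\ell^1)} \ge \max\{\|c_0\|_{\ell^1},\ \sup_{n\ge 1}\tfrac12\|c_n\|_{\ell^1}\}$.

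For the matching upper bound, I would take an arbitrary $y\in\ell^1$, write $y = \sum_{n\ge 0} y_n e_n$, and estimate
\[
\|Cy\|_{\ell^1} = \Big\| \sum_{n\ge 0} y_n c_n \Big\|_{\ell^1} \le \sum_{n\ge 0} |y_n|\,\|c_n\|_{\ell^1} = |y_0|\,\|c_0\|_{\ell^1} + \sum_{n\ge 1} 2|y_n|\cdot \tfrac12\|c_n\|_{\ell^1}.
\]
Bounding $\|c_0\|_{\ell^1}$ and each $\tfrac12\|c_n\|_{\ell^1}$ ($n\ge1$) by $M \bydef \max\{\|c_0\|_{\ell^1},\ \sup_{n\ge1}\tfrac12\|c_n\|_{\ell^1}\}$, the right-hand side is at most $M\big(|y_0| + 2\sum_{n\ge1}|y_n|\big) = M\|y\|_{\ell^1}$, which gives $\|C\|_{B(\ell^1)}\le M$. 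Combining with the lower bound proves the identity. One should also note that the triangle inequality / interchange of summation used in the display is justified because $C\in B(\ell^1)$ ensures the relevant series converge absolutely; alternatively one argues first for finitely supported $y$ and passes to the limit by density.

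There is no serious obstacle here; the only point requiring a little care is the bookkeeping of the weights $\alpha_0=1$ versus $\alpha_j=2$, which is exactly what produces the asymmetric-looking formula with the isolated $\|c_0\|_{\ell^1}$ term (written as $\|c_n\|_{\ell^1}$ with the implicit understanding $n=0$ in the statement) alongside the $\tfrac12\|c_n\|_{\ell^1}$ terms for $n\ge1$. I would state the weighted-$\ell^1$ principle cleanly at the outset so that both inequalities fall out symmetrically, and then specialize to the weights $(1,2,2,\dots)$.
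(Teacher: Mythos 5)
Your proof is correct, and since the paper states Lemma~\ref{lem:op_ell_one_norm} without proof (treating it as the standard weighted-$\ell^1$ operator-norm computation), there is no paper argument to compare against. Your reading of the slight notational slip in the statement — that the first entry of the $\max$ should be understood as $\|c_0\|_{\ell^1}$ — is the intended one, as confirmed by the finite-dimensional analogue written out explicitly in \eqref{eq:Bell1matrix}. The two-sided argument (lower bound by testing on basis vectors $e_n$ scaled by $1/\|e_n\|_{\ell^1}$, upper bound by the triangle inequality and factoring out the weight $2$ for $n\ge 1$) is exactly the right and essentially unique way to do this; the remark about justifying the interchange of summation via absolute convergence or density of finitely supported sequences is appropriate and closes the only potential gap.
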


\begin{lemma} \label{lem:bounding_invLk_small_k}
Let $N \in \N$ be even and assume that $\cL_k^{(N)} \in M_{N+1}(\R)$ is invertible, and consider the operator $\cL_k$ as in Figure~\ref{fig:operators_cL_kand_A_k}. Let
\begin{align}
\rho^{(1)} &\bydef \frac{1}{N+1} \left( \|(\cL_k^{(N)})^{-1}_{col(1)}\|_{\ell^1} + 1 \right)
\label{eq:rho1}
\\
\rho^{(2)} &\bydef  \| (\cL_k^{(N)})^{-1}_{col(N+1)} + \frac{1}{N+2} (\cL_k^{(N)})^{-1}_{col(1)} \|_{\ell^1} + \frac{1}{N+2} 
\label{eq:rho2}
\\
\rho^{(3)} &\bydef \frac{2}{(N+1)(N+3)} \|(\cL_k^{(N)})^{-1}_{col(1)} \|_{\ell^1} + \frac{1}{N+1} + \frac{1}{N+3} 
\label{eq:rho3}
\end{align}
and
\begin{equation} \label{eq:rho_k}
\rho_k \bydef \frac{|\mu_k|}{2} \max \{ \rho^{(1)}, \rho^{(2)}, \rho^{(3)} \},
\end{equation}
where $(\cL_k^{(N)})^{-1}_{col(1)} \in \R^{N+1}$ denotes the first column of the matrix $(\cL_k^{(N)})^{-1}$. Let 
\begin{equation} \label{eq:beta_norm_A}
\beta_k \bydef \max \left\{ \| (\cL_k^{(N)})^{-1} \|_{B(\ell^1)} , \frac{1}{2(N+1)} \left( \| (\cL_k^{(N)})^{-1}_{col(1)} \|_{\ell^1} + 1 \right)  \right\}. 
\end{equation}
If $\rho_k<1$, then $\cL_k$ is a boundedly invertible operator on $\ell^1$ with
\begin{equation}
\| \cL_k^{-1} \|_{B(\ell^1)} \le \frac{\beta_k}{1-\rho_k}.
\end{equation}
\end{lemma}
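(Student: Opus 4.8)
The plan is to set $\cL_k = A_k^{-1}(\text{something close to identity})$ via a Neumann series argument. More precisely, I would first compute the product $A_k \cL_k$ (both viewed as operators on $\ell^1$, using the block structure from Figure~\ref{fig:operators_cL_kand_A_k}), write it as $I - E_k$ where $E_k \bydef I - A_k\cL_k$ is the ``defect'' operator, and show $\|E_k\|_{B(\ell^1)} \le \rho_k < 1$. Then $A_k\cL_k = I - E_k$ is invertible on $\ell^1$ with $\|(A_k\cL_k)^{-1}\|_{B(\ell^1)} \le (1-\rho_k)^{-1}$, hence $\cL_k$ is invertible with $\cL_k^{-1} = (A_k\cL_k)^{-1}A_k$ and
\[
\|\cL_k^{-1}\|_{B(\ell^1)} \le \frac{\|A_k\|_{B(\ell^1)}}{1-\rho_k}.
\]
The claim then reduces to proving the two estimates $\|A_k\|_{B(\ell^1)} \le \beta_k$ and $\|E_k\|_{B(\ell^1)} \le \rho_k$, both via the column-wise characterization of the $B(\ell^1)$-norm in Lemma~\ref{lem:op_ell_one_norm}.

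For the bound on $\|A_k\|_{B(\ell^1)}$, I would read off the columns of $A_k$ from Figure~\ref{fig:operators_cL_kand_A_k}: the finite block $(N+1)\times(N+1)$ corner contributes columns of $(\cL_k^{(N)})^{-1}$, the tail diagonal contributes columns with a single entry $\frac{1}{2(N+m)}$ for $m \ge 1$, and the first row of $A_k$ carries extra entries of the form (constant)$\times(\cL_k^{(N)})^{-1}_{col(1)}$-type corrections coming from eliminating the infinite tail $v$ of the first row of $\cL_k$. Applying Lemma~\ref{lem:op_ell_one_norm} (which takes the max over $\|c_n\|_{\ell^1}$ and $\sup_{n\ge1}\tfrac12\|c_n\|_{\ell^1}$), the columns from the finite block give $\|(\cL_k^{(N)})^{-1}\|_{B(\ell^1)}$, while each tail column $n = N+m$ has $\ell^1$-norm roughly $\frac{1}{2(N+m)}(1 + \|(\cL_k^{(N)})^{-1}_{col(1)}\|_{\ell^1})$ (the extra factor coming from the first-row correction entry), which is maximized at $m=1$, giving the second term in \eqref{eq:beta_norm_A}. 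Taking the maximum yields $\beta_k$.

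For the bound on $\|E_k\|_{B(\ell^1)} = \|I - A_k\cL_k\|_{B(\ell^1)}$, the key observation is that $A_k$ is designed so that $A_k\cL_k$ agrees with the identity on the finite block and on all but finitely many of the ``boundary'' columns; the defect is concentrated in a small number of columns near the interface between the finite block and the tail, plus the coupling from the off-diagonal $\mu_k$-entries of $\cL_k$ that straddle the interface. Multiplying out, the nonzero columns of $E_k$ are (essentially) columns $1$, $N+1$, and $N+2$ (the indices where the tridiagonal $\pm\mu_k$ bands of $\cL_k$ reach across the $N$-cutoff, together with the column fed by the row-$0$ relation), and a direct computation of their $\ell^1$-norms — using $\cL_k^{(N)}(\cL_k^{(N)})^{-1} = I$ to cancel the bulk terms and keeping only the $\mu_k$-weighted residuals on rows $N, N+1, N+2$ — produces exactly $\tfrac{|\mu_k|}{2}\rho^{(1)}$, $\tfrac{|\mu_k|}{2}\rho^{(2)}$, $\tfrac{|\mu_k|}{2}\rho^{(3)}$ as in \eqref{eq:rho1}--\eqref{eq:rho3}, where the $\frac{1}{N+j}$ terms come from the tail diagonal entries $2(N+j)$ of $\Omega$ being inverted and the $(\cL_k^{(N)})^{-1}_{col(1)}$ terms come from the first-row corrections propagating through $A_k$. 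Lemma~\ref{lem:op_ell_one_norm} then gives $\|E_k\|_{B(\ell^1)} \le \tfrac{|\mu_k|}{2}\max\{\rho^{(1)},\rho^{(2)},\rho^{(3)}\} = \rho_k$.

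The main obstacle I anticipate is bookkeeping the finitely many ``interface'' columns of $A_k\cL_k$ correctly — in particular tracking how the infinite first-row tail $v = (-2,2,-2,\dots)$ of $\cL_k$, once multiplied by the tail-diagonal part of $A_k$, telescopes against the first-row corrections built into $A_k$, and verifying that the resulting cancellations leave precisely the residual expressions in \eqref{eq:rho1}--\eqref{eq:rho3} and no others. This is not conceptually hard but requires careful index-chasing through the block decomposition of Figure~\ref{fig:operators_cL_kand_A_k}; once the three residual columns are identified and their $\ell^1$-norms bounded, the Neumann series argument and the final estimate are routine.
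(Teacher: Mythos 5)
Your strategy is the paper's: view $A_k$ as an approximate inverse, set $E_k \bydef I - A_k\cL_k$, show $\|E_k\|_{B(\ell^1)}\le\rho_k$ column by column via Lemma~\ref{lem:op_ell_one_norm}, invert by Neumann series, and conclude $\|\cL_k^{-1}\|_{B(\ell^1)}\le\|A_k\|_{B(\ell^1)}/(1-\rho_k)$. Your computation of $\|A_k\|_{B(\ell^1)}\le\beta_k$ is essentially right, with one small caveat: the rank-one correction $-[(\cL_k^{(N)})^{-1}]_{col(1)}\,v\,\Omega^{-1}$ that eliminates the infinite tail $v$ of $\cL_k$'s row $0$ occupies the \emph{entire} upper-right block of $A_k$ (it has nonzero entries in rows $0,\dots,N$), not just the first row as you describe; this is exactly why the tail columns of $A_k$ pick up the whole vector $(\cL_k^{(N)})^{-1}_{col(1)}$ rather than a scalar.

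Where your bookkeeping goes wrong is in identifying the nonzero columns of $E_k$. The first $N$ columns of $E_k$ vanish identically; in particular the column you list as ``column $1$ \dots\ fed by the row-$0$ relation'' is zero. The row-$0$ relation affects a \emph{row} of $\cL_k$, not a column, and the corresponding column of $\cL_k$ (the vector $(1,\mu_k,0,0,\dots)^T$) is supported inside the finite block, so $A_k$ sends it to $e_1$ exactly and the residual vanishes. Conversely, there are not merely three nonzero columns: \emph{every} column of $E_k$ with Chebyshev index $\ge N$ is nonzero, because the tail $v$ of row $0$ reaches all of them through $v\,\Omega^{-1}\tilde\Lambda$, and the $\pm\mu_k$ band of $\cL_k$ contributes at every index beyond the cutoff. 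The three quantities $\rho^{(1)},\rho^{(2)},\rho^{(3)}$ are \emph{not} the norms of three residual columns: $\rho^{(1)}$ bounds the $(N+1)$-st column, $\rho^{(2)}$ bounds the $(N+2)$-nd, but $\rho^{(3)}$ is a \emph{uniform} bound on the $\ell^1$-norms of all columns with index $\ge N+3$ (the norm is achieved at the $(N+3)$-rd column and decreases thereafter because of the $\frac{1}{(j-2)j}$ decay coming from $v\,\Omega^{-1}\tilde\Lambda$). Executing the plan as you stated it, you would either miss the infinite tail of nonzero columns or mis-attribute $\rho^{(3)}$. Once you treat columns $N+1$ and $N+2$ explicitly and bound the entire tail $j\ge N+3$ uniformly by $\rho^{(3)}$, the argument closes exactly as in the paper.
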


\begin{proof}
From the definition of the operators $A_k$ and $\cL_k$ in Figure~\ref{fig:operators_cL_kand_A_k}, on can verify that the linear operator $I-A_k \cL_k$ is given in Figure~\ref{fig:operator_I_minus_AL}, where the operator $\widetilde{(\cL_k^{(N)})^{-1}_{col(N+1)}}$ is also defined in Figure~\ref{fig:operator_I_minus_AL}. 
%
%
\begin{figure}[h!]
\begin{center}
\includegraphics[width=15cm]{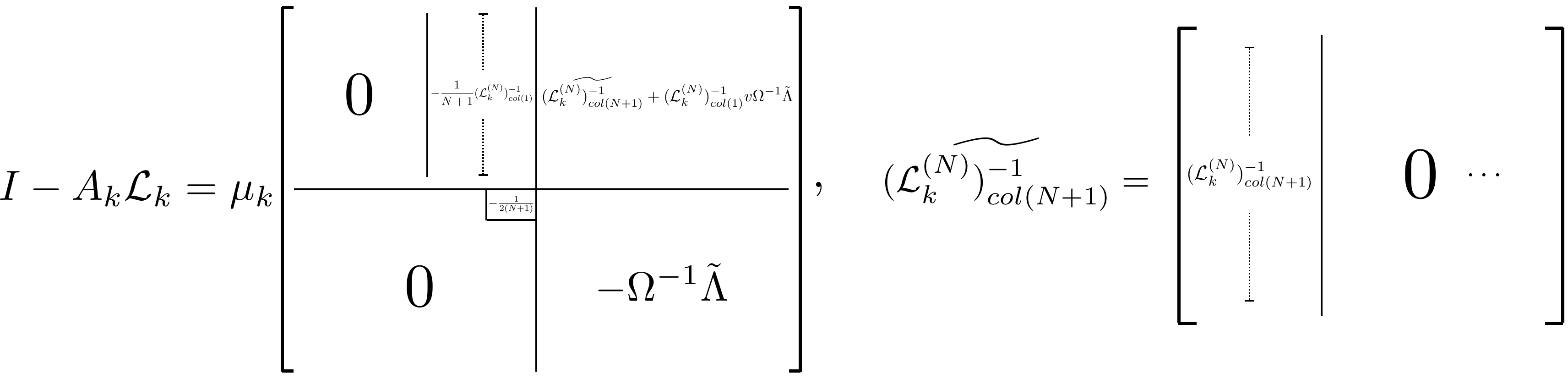}
\end{center}
\caption{The operators $I-A_k \cL_k$ and $\widetilde{(\cL_k^{(N)})^{-1}_{col(N+1)}}$.}
\label{fig:operator_I_minus_AL}
\end{figure}
%

Our first goal is to bound the operator norm $ \| I - A_k \cL_k \|_{B(\ell^1)}$. From Figure~\ref{fig:operator_I_minus_AL}, we see that the first $N$ columns of $I-A_k \cL_k$ are zero. Recalling Lemma~\ref{lem:op_ell_one_norm}, and denoting $C = I - A_k \cL_k$, we get that
\[
\| I - A_k \cL_k \|_{B(\ell^1)} = \| C \|_{B(\ell^1)} = \sup_{n \ge N+1} \frac{1}{2} \|c_n\|_{\ell^1}
\]

The $\ell^1$ norm of the $(N+1)^{th}$ column of $I-A_k \cL_k$ is given by $\|c_{N+1}\|_{\ell^1} = |\mu_k| \rho^{(1)}$, where $\rho^{(1)}$ is defined in \eqref{eq:rho1}. Let us now compute explicitly $c_j$, the $j^{th}$ columns of $I-A_k \cL_k$ for $j > N+1$. For this, we need to explicitly obtain the $j^{th}$ column of $(\cL_k^{(N)})^{-1}_{col(1)} v \Omega^{-1}\tilde \Lambda$. Let us first look at the {\em row} vector $v \Omega^{-1} \tilde \Lambda$, which is given by
\[
v \Omega^{-1} \tilde \Lambda = \left( \frac{1}{N+2},\frac{1}{N+1}-\frac{1}{N+3},-\frac{1}{N+2}+\frac{1}{N+4},\dots,\frac{2 (-1)^j}{(N+j-1)(N+1+j)},\dots \right),
\]
and therefore for $j>N+1$,
\begin{equation} \label{eq:Minv_col1_v_Omega_invT}
\left( (\cL_k^{(N)})^{-1}_{col(1)} v \Omega^{-1}\tilde \Lambda \right)_{col(j)} = 
\begin{cases}
\displaystyle
\frac{1}{N+2} (\cL_k^{(N)})^{-1}_{col(1)}, & j=N+2
\vspace{.2cm}
\\
\displaystyle
\frac{2 (-1)^j}{(j-2)j} (\cL_k^{(N)})^{-1}_{col(1)}, & j \ge N+3.
\end{cases}
\end{equation}
Hence, the finite part of the $(N+2)^{th}$ column of $\frac{1}{\mu_k}(I-A_k \cL_k)$ is given by the sum of (a) the first column of the operator $\widetilde{(\cL_k^{(N)})^{-1}_{col(N+1)}}$ (see Figure~\ref{fig:operator_I_minus_AL}), that is $(\cL_k^{(N)})^{-1}_{col(N+1)}$; and (b) the first column of the operator $(\cL_k^{(N)})^{-1}_{col(1)} v \Omega^{-1} \tilde \Lambda$, that is (using \eqref{eq:Minv_col1_v_Omega_invT}) $\frac{1}{N+2} (\cL_k^{(N)})^{-1}_{col(1)}$. Moreover, the {\em tail} part of the $(N+2)^{th}$ column of $\frac{1}{\mu_k}(I-A_k \cL_k)$ is given by $(0,-\frac{1}{2(N+2)},0,0,\dots)^T$. We conclude from this analysis that the $\ell^1$ norm of the $(N+2)^{th}$ column of $I-A_k \cL_k$ is given by $\|c_{N+2}\|_{\ell^1} = |\mu_k| \rho^{(2)}$, where $\rho^{(2)}$ is defined in \eqref{eq:rho2}. 

For $j>N+2$, using \eqref{eq:Minv_col1_v_Omega_invT}, the finite part of the $(N+2)^{th}$ column of $\frac{1}{\mu_k}(I-A_k \cL_k)$ is given by $\frac{2 (-1)^j}{(N+j-1)(N+1+j)} (\cL_k^{(N)})^{-1}_{col(1)}$ while the tail part is given by 
$(0,0,\dots,0,-\frac{1}{2(j-2)},0,\frac{1}{2j},0,0\dots)^T$. We conclude from this that for $j \ge N+3$, $\|c_{j}\|_{\ell^1}$, the $\ell^1$ norm of the $j^{th}$ column of $I-A_k \cL_k$ is bounded from above by $\|c_{N+3}\|_{\ell^1} = |\mu_k| \rho^{(3)}$, where $\rho^{(3)}$ is defined in \eqref{eq:rho3}. 

Therefore, we get that 
\[
\| I - A_k \cL_k \|_{B(\ell^1)} =  \rho_k = \frac{|\mu_k|}{2} \max \{ \rho^{(1)}, \rho^{(2)}, \rho^{(3)} \} < 1,
\]
by assumption. By a Neumann series argument, $A_k \cL_k$ is invertible with 
\[
(A_k \cL_k)^{-1} = \sum_{j \ge 0} ( I - A_k \cL_k)^j.
\]
By construction, $A_k$ is invertible and $\|A_k\|_{B(\ell^1)}=\beta_k$. Hence, $\cL_k$ is invertible with
\begin{equation} \label{eq:cL_k_inv_Neumann}
\cL_k^{-1} = \left( \sum_{j \ge 0} ( I - A_k \cL_k)^j \right) A_k.
\end{equation}
Using \eqref{eq:beta_norm_A}, we conclude the proof realizing that
\[
\| \cL_k^{-1} \|_{B(\ell^1)} \le  \sum_{j \ge 0} \left(\| I - A_k \cL_k\|_{B(\ell^1)}\right)^j  \| A_k\|_{B(\ell^1)} \le \frac{1}{1-\rho_k} \| A_k \|_{B(\ell^1)} =  \frac{\beta_k}{1-\rho_k}. \qedhere
\]
\end{proof}

We combined interval arithmetic with the result of Lemma~\ref{lem:bounding_invLk_small_k} 
 to obtain the following result, which when combined with the explicit tail bound of Corollary~\ref{cor:LkinvProj}, will be used to obtain a general bound for $\| \cL^{-1}\|_{B(X)}$.
 
 \begin{corollary} \label{cor:explicit_bound_norm_inv_small_mu}
 For all $\mu_k \in [0,1000]$, 
 \begin{equation}
 \| \cL_k^{-1}\|_{B(\ell^1)} \le 1.45.
 \end{equation}
 \end{corollary}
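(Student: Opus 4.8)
The plan is to combine Lemma~\ref{lem:bounding_invLk_small_k} with a finite interval-arithmetic computation that covers the parameter range $\mu_k \in [0,1000]$. The key observation is that, for fixed even projection dimension $N$, the quantities $\rho_k$ and $\beta_k$ appearing in Lemma~\ref{lem:bounding_invLk_small_k} depend on $k$ only through the single scalar $\mu_k$ (and on the finite matrix $\cL_k^{(N)}$, which is itself a function of $\mu_k$ alone). So the statement ``$\|\cL_k^{-1}\|_{B(\ell^1)} \le 1.45$ for all $\mu_k \in [0,1000]$'' is really a statement about a one-parameter family of finite matrices, and it can be verified rigorously by partitioning $[0,1000]$ into finitely many subintervals and bounding $\rho_k$ and $\beta_k$ on each subinterval with interval arithmetic.

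The steps I would carry out are as follows. First, fix a value of $N$ (even) large enough that for every $\mu \in [0,1000]$ the finite matrix $\cL_k^{(N)}$ is invertible and $\rho_k < 1$; since $\cL_k$ becomes diagonally dominant from Chebyshev index $\sim \mu_k$ onward, a choice like $N$ of order a few thousand suffices, and monotonicity/explicit estimates of $\rho^{(1)},\rho^{(2)},\rho^{(3)}$ in $N$ let one check the hypothesis. Second, subdivide $[0,1000]$ into intervals $[\mu^{(i)},\mu^{(i+1)}]$; on each, evaluate $\cL_k^{(N)}$ as an interval matrix, compute a rigorous enclosure of its inverse (e.g.\ via an interval Gauss--Seidel or Krawczyk-type verification of an approximate inverse), and from that enclosure produce interval upper bounds for $\|(\cL_k^{(N)})^{-1}\|_{B(\ell^1)}$, $\|(\cL_k^{(N)})^{-1}_{col(1)}\|_{\ell^1}$ and $\|(\cL_k^{(N)})^{-1}_{col(N+1)}\|_{\ell^1}$. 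Third, plug these into the explicit formulas \eqref{eq:rho1}--\eqref{eq:rho_k} and \eqref{eq:beta_norm_A} to get interval bounds on $\rho_k$ and $\beta_k$, verify $\rho_k<1$, and finally bound $\beta_k/(1-\rho_k)$ from above; if every subinterval yields a value $\le 1.45$ we are done. The factor $h/2$ hidden in $\mu_k = -\tfrac{h}{2}\lambda_k$ plays no role here: the claim is quantified directly over $\mu_k$, so the computation is purely a function of that scalar.

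The main obstacle is the interplay between the subdivision of $[0,1000]$ and the interval overestimation in enclosing $(\cL_k^{(N)})^{-1}$: for $\mu$ near the upper end of the range, $\cL_k^{(N)}$ is far from diagonally dominant over a large leading block, the matrix is moderately ill-conditioned, and a crude interval inverse will blow up unless the $\mu$-subintervals are taken narrow enough (so the interval matrix entries have small radius) and the verification of the approximate inverse is done carefully. Getting the bound down to the clean value $1.45$ — rather than merely ``some finite constant'' — requires that the numerical inverse be accurate and the wrapping controlled, which is where most of the computational effort (and the choice of $N$ and of the partition) goes. Everything else is a direct substitution into the already-proved Lemma~\ref{lem:bounding_invLk_small_k}.
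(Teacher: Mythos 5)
Your proposal is correct and follows essentially the same route as the paper: mesh the interval $[0,1000]$, form the interval tridiagonal matrix $\boldsymbol{\cL_k^{(N)}}$ on each subinterval, compute an interval enclosure of $(\boldsymbol{\cL_k^{(N)}})^{-1}$, and feed the resulting enclosures of $\rho_k$ and $\beta_k$ into Lemma~\ref{lem:bounding_invLk_small_k}. The only (minor) deviation is that the paper lets the projection dimension vary with the subinterval, $N=N(j)$, rather than fixing a single large $N$ for all of $[0,1000]$; this is purely a practical optimization and does not change the argument.
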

 
 \begin{proof}
Consider a mesh $0=\gamma_0<\gamma_1<\cdots< \gamma_{m-1}<\gamma_m = 1000$ of $[0,1000]$. Fix $j \in \cI \bydef \{0,\dots,m-1\}$, define the interval $\bmuk=\bmuk(j) = [\gamma_j,\gamma_{j+1}]$, and consider the $(N+1)\times(N+1)$ interval matrix $\boldsymbol{\cL_k^{(N)}}=\boldsymbol{\cL_k^{(N)}}(j)$ with $N=N(j)$, the finite part of $\cL_k$, whose tridiagonal entries are the intervals $\pm \bmuk$. With interval arithmetic, compute the interval valued matrix $(\boldsymbol{\cL_k^{(N)}})^{-1}$ and apply Lemma~\ref{lem:bounding_invLk_small_k}.
\end{proof}

\subsection{Uniform bound for \boldmath$\|\cL_k^{-1}\|_{B(\ell^1)}$~\unboldmath  for large \boldmath$k$\unboldmath} \label{sec:uniform_bounds_large_k}

In this section, we derive a uniform bound for $\|\cL_k^{-1}\|_{B(\ell^1)}$ for large $k$, first by obtaining a uniform bound for $\|(\cL_k^{(N)})^{-1}\|_{B(\ell^1)}$ for large $k$ and $N$ (see Corollary~\ref{cor:LkinvProj}), and then by using the result of Lemma~\ref{lem:bounding_invLk_small_k}.

\paragraph{Notation.} 
For a square matrix $A$ by $A_{\co{i}}$ we denote the $i$-th column vector of $A$, by $A_{\ro{j}}$ we denote the $j$-th row vector of $A$, by $A_{\co{i:j}}$ we denote the matrix composed out of the $i,\dots,j$-th columns of $A$, by $A_{\ro{i:j}}$ we denote the matrix composed out of the $i,\dots,j$-th columns of $A$.

As before, we denote the $N+1$ dimensional truncated linear operator $\cL_k$ by $\cLp_k$. $\cLp_k$ acts on vectors of the Chebyshev coefficients corresponding to $T_0,\dots,T_{N}$. We decompose $\cLp_{k}$ as the sum of the tridiagonal operator and the single non-zero row operator, i.e.,
\begin{equation}
\label{plusrone}
  \cLp_{k} \bydef M_k +  U  \bydef \left[ \begin{array}{cccc} 1&0&\cdots&0\\\eigk&\ &\ &\ \\0&\ &T_k&\ \\\vdots &\ &\ &\ \end{array}\right] + 
\left[ \begin{array}{cccc}0&-2&2&\dots\\\ &\ &\ &\ \\\ &\ &0&\ \\\ &\ &\ &\ \\ \end{array} \right],
\end{equation}
where 
\begin{equation}
\label{tridiag}
T_{k} \bydef \left[ \begin{array}{cccc} 
 2 &-\eigk&0&\cdots\\
\eigk& 4 &-\eigk&0\\
\ddots&\ddots&\ddots&\ddots\\
\dots&0&\eigk& 2N
\end{array}\right],
\end{equation}
$T_k$ diagonal elements are denoted by 
\[
d_k\bydef 2k\text{ for }k\geq 1.
\]

We derive a uniform bound for $\|\cLpkinv\|_{B(\ell^1)}$ for large $N$ and $k$. For the purpose of this section, we fix the  approximation dimension $N$ and the index $k$. 

Within this section, by $\left\|\cdot\right\|_{\ell^1}$ we denote the analogue of \eqref{eq:ell_one_norm} for finite vectors, and the $B(\ell^1)$ norm for a finite matrix $A$ takes the form
\begin{align}
\|v\|_{\ell^1} &\bydef v_1 + \sum_{j=2}^{N}{2|v_j|},\label{eq:vell1}\\
\| A\|_{B(\ell^1)} &\bydef \max \left\{|a_{1,1}|+2\sum_{j=2}^{N}|a_{j,1}|,\ \sup_{n > 1} \frac{1}{2}|a_{1,n}| + \sum_{j=2}^{N}|a_{j,n}|  \right\}.\label{eq:Bell1matrix}
\end{align}

To distinguish the vector $\ell^1$ norm from the standard matrix norm, by $\left\|
\cdot\right\|_{\ellmat}$ we denote the sum of absolute values  norm, i.e. for a vector / matrix $v,A$ we have $\left\|v\right\|_{\ellmat} = \sum_{j=1}^{N}{\left|v_{j}\right|}$, 
$\left\|
A\right\|_{\ellmat} = \max_{i}{\sum_{j=1}^{N}{\left|A_{ij}\right|}}$.

First, we are concerned with bounding the norm of the inverse of the tri-diagonal part of $\cLpk$, excluding the first row and column (i.e., the operator $T_k$). Second, using the Sherman-Morrison formula, we eventually bound $\|\cLpkinv\|_{B(\ell^1)}$ by simply $2\|T_k^{-1}\|_1$.
\newcommand{\mR}{\mathbb{R}}
\newcommand{\mnorm}{\ell^1}
%
%
\subsubsection{Finite inverse tridiagonal operator \boldmath$\left\|T_k^{-1}\right\|_{\ellmat}$\unboldmath norm uniform with respect to \boldmath$N$\unboldmath}

\paragraph{Notation.} Let $N>0$ be an even integer defining the projection size (fixed), $k>1$. Given sequence $\{x_j\}_{j=1}^N$ by
\begin{equation}
\label{tridiagser}
T_k(x_1, \dots, x_N) \bydef
\begin{bmatrix}
   x_1 &-\eigk&0&\dots&\ \\\eigk& x_2 &-\eigk&0&\dots\\\ &\ddots&\ddots&\ddots&\ \\\dots&0&\eigk& x_{N-1} &-\eigk\\\ &\dots&0&\eigk& x_N
\end{bmatrix},
\end{equation}
we denote the $N\times N$ tridiagonal matrix with 
the sequence on the diagonal, and $\pm \mu_k$ under and over-diagonal respectively.

We assume that $k$ denoting the Fourier coefficient is fixed, and denote $T = T_k$. By default $T$ without parentheses denotes the matrix with the sequence $d_k \bydef 2k$ as its diagonal for $k\geq 1$. We also use the notation $T^{-1}(x_1,\dots,x_N)$ to denote the inverse of $T(x_1,\dots,x_N)$. Observe that any matrix having the tridiagonal form like \eqref{tridiagser} is invertible when $\mu_k\neq 0$ due to an easy determinant computation (demonstrated in \cite{cyranka_mucha}). If not specified otherwise, we assume that $\mu_k>0$.
\begin{remark}
We base our large tridiagonal matrices analysis on a kind of \emph{divide and conquer} paradigm; that is,  in order to invert a large 
tridiagonal matrix, we decompose it recursively into smaller and smaller blocks. The blocks appearing in the inverse 
have a simple and explicit form. In Lemma~\ref{lemblock} we formalize this observation, to which we are going to refer often in the 
forthcoming analysis. The formulas that we used are a special case of a formula known in the literature as Banachiewicz inversion formula based on Schur complement \cite{schur}. The same formula in the context of the validated numerical method was used in \cite{nakao_shur}.
\end{remark}

The presented analysis was used already in \cite{cyranka_mucha}, where it is shown that $\|T_k^{-1}\|_{\ellmat}$ is bounded independently of $\mu$.

\begin{theorem}[Theorem~4.11 in \cite{cyranka_mucha}, setting $m=1$]
\label{tridiag}
    For any $\mu_k\in\mR$. There exists a constant that bounds $\|T_k^{-1}\|_{\ellmat}$ independently of $\mu_k$ for all $k$ and $N$.
\end{theorem}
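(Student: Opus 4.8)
The plan is to establish a bound on $\|T_k^{-1}\|_{\ellmat}$ that is uniform in both $\mu_k$ and $N$ by exploiting the recursive block structure of tridiagonal matrices via the Schur complement (Banachiewicz) inversion formula alluded to in the remark preceding the statement. The key idea, following \cite{cyranka_mucha}, is a \emph{divide and conquer} decomposition: partition the $N\times N$ matrix $T_k$ into a $2\times 2$ block form with diagonal blocks $T^{(1)}$ (top-left, of size roughly $N/2$) and $T^{(2)}$ (bottom-right), where the only nonzero entries of the off-diagonal blocks are the single $\pm\mu_k$ coupling terms at the interface. The inverse then has blocks expressible in terms of $T^{(2)-1}$ and the Schur complement $S = T^{(1)} - \mu_k^2 e\, (T^{(2)-1})_{1,1}\, e^{\top}$ (a rank-one perturbation of $T^{(1)}$ in its bottom-right corner). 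Iterating this decomposition produces a binary tree of subproblems whose leaves are small fixed-size tridiagonal blocks.

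The main steps I would carry out are: first, set up the block recursion precisely, recording how each $2\times 2$ block inversion expresses the blocks of $T^{-1}$ in terms of the inverse of a smaller tridiagonal matrix and a Schur complement that is itself tridiagonal except for a rank-one correction in one corner. Second, track the \emph{corner entries} $(T^{(i)-1})_{1,1}$ and $(T^{(i)-1})_{N_i,N_i}$ through the recursion, since these are the only quantities that feed into the next level's Schur complement; the crucial observation is that because the diagonal entries $d_k = 2k \geq 2$ grow (or are at least bounded below), these corner entries satisfy a contraction-type recursion that keeps them uniformly bounded (and in fact small) regardless of $\mu_k$. Third, bound the $\ell^1$ norm of each block of $T^{-1}$ by a geometric-type estimate: the off-diagonal decay of entries in the inverse of a (perturbed) tridiagonal matrix, combined with the uniform control of corner entries, yields a bound on $\sum_j |(T^{-1})_{ij}|$ that does not degrade as $N \to \infty$. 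Finally, invoke the cited Theorem~4.11 of \cite{cyranka_mucha} directly — since the statement here is explicitly that result with $m=1$ — so in practice the ``proof'' reduces to verifying that the present $T_k$ (with diagonal $2k$ and off-diagonals $\pm\mu_k$) is an instance covered by that theorem, and citing it.

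The hard part, conceptually, is the uniformity in $\mu_k$: naively, as $\mu_k \to \infty$ the off-diagonal entries blow up, so one cannot hope for diagonal dominance or a Neumann series. The resolution — and the technical heart of \cite{cyranka_mucha} that is being imported — is that the Schur complement mechanism automatically absorbs the large $\mu_k$: the rank-one correction $\mu_k^2 (T^{(2)-1})_{1,1}$ appearing in $S$ is controlled because $(T^{(2)-1})_{1,1}$ decays like $O(1/\mu_k^2)$ (this is where the nonzero diagonal $d_k = 2k$ and the structure of the determinant recursion enter), so the product stays bounded. Making this precise requires carefully establishing the asymptotics of the corner entries of inverses of tridiagonal matrices with large off-diagonals, which is exactly the content of the recursive lemmas in \cite{cyranka_mucha}. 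Since the excerpt permits assuming earlier-stated results and the statement is quoted verbatim from that reference, my proof of this particular theorem will simply record the reduction to the cited result and verify the hypotheses (namely that $\mu_k \in \mathbb{R}$ is arbitrary and the diagonal entries are $2k$), deferring the detailed block-recursion estimates to \cite{cyranka_mucha}.
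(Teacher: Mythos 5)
The paper itself does not prove this theorem: it is quoted verbatim from \cite{cyranka_mucha} and then \emph{strengthened} by the paper's own chain of lemmas culminating in Theorem~\ref{thm:trivialTkinvbd}, which gives the explicit constant $2\sqrt{2}+\tfrac{1}{2\mu_k}$. Your reduction to the cited result, together with a check that $T_k$ falls in its scope, is therefore the same route the paper takes and is formally fine. One substantive caution on your explanatory remark, though: you claim $(T^{(2)-1})_{1,1}$ decays like $O(1/\mu_k^2)$, so that the rank-one correction $\mu_k^2(T^{(2)-1})_{1,1}$ to the Schur-complement diagonal ``stays bounded.'' That is not the mechanism. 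By Lemma~\ref{lema} the corner quantities $a_j,\hat a_j,b_j$ are only $O(1/\mu_k)$, so the correction $\mu_k^2 a_{j-1}$ grows like $\mu_k$ --- the same order as the off-diagonal --- and the modified diagonal is \emph{not} uniformly bounded. What actually delivers uniformity (and what the paper's subsequent lemmas encode) is: every entry of $T_k^{-1}$ is $O(1/\mu_k)$ (Lemma~\ref{lem:tkinvelement}); only an $O(\mu_k)$-length window of each column contributes significantly, the rest decaying geometrically (Lemma~\ref{lem:tkinvsplit}); multiplying $O(\mu_k)$ entries of size $O(1/\mu_k)$ gives the $O(1)$ column bound of Theorem~\ref{thm:trivialTkinvbd}. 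If you were to carry your sketch through literally, the claim that the Schur-complement diagonal correction stays bounded would fail, so the heuristic should be rewritten to match the entry-size/window-length argument.
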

However, in the present paper, we make the results stronger by not only showing that $\|T_k^{-1}\|_{\ellmat}$ is bounded uniformly in $k$ and $N$ but also derive an explicit upper bound for it. We provide a new theoretical framework building upon the auxiliary lemmas from \cite{cyranka_mucha} and eventually prove the main theorem.

\paragraph{Notation} Let $\upmu_k$ denote the $n\times(N-n)$ matrix having single non-zero element in the lower left corner
\begin{equation}
\label{eq:muk}
\upmu_k = \begin{bmatrix} 0 & & & \\ \vdots & &\ddots& \\ 0&0& & \\\mu_k&0&\dots&0\end{bmatrix},\ \text{then }
\upmu_k\upmu_k^T = \begin{bmatrix}  & & &0\\ &\ddots& &\vdots\\ & &0&0\\0&\dots&0&\mu_k^2\end{bmatrix}\in M_n(\R).
\end{equation}
\newcommand{\inv}{\mbox{Inv}}
\newcommand{\id}{\mbox{Id}}
\begin{lemma}
  \label{lemblock}
Let $0 < n < N$ be even. Let $T\in\mathbb{R}^{N\times N}$ be a tridiagonal matrix of the form \eqref{tridiagser} with an arbitrary sequence on the diagonal. $T$ is decomposed
in the following way
\[
T = 
\begin{bmatrix}
F & -\upmu_k \\
\upmu_k^T & R
\end{bmatrix},
\]
where $F\in M_n(\R)$, and $R\in M_{N-n}(\R)$. Then it holds that
\[
T^{-1} = \begin{bmatrix} F & -\upmu_k \\ \upmu_k^T & R\end{bmatrix}^{-1} = 
\begin{bmatrix} \left(F + \upmu_k\upmu_k^TR^{-1}_{11}\right)^{-1} & F^{-1}\upmu_k\left(R + \upmu_k^T\upmu_kF^{-1}_{nn}\right)^{-1} \\ -R^{-1}\upmu_k^T\left(F + \upmu_k\upmu_k^T R^{-1}_{11} \right)^{-1} & \left(R + \upmu_k^T\upmu_kF^{-1}_{nn}\right)^{-1}\end{bmatrix}.
\]

This formula is a special case of a formula known in the literature as Banachiewicz inversion formula based on Schur complement \cite{schur}.
\end{lemma}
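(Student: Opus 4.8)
The plan is to recognize the stated identity as an instance of the classical Banachiewicz (Schur complement) inversion formula and to verify it by a single block computation, exploiting the very special rank-one structure of $\upmu_k$. First I would record the block decomposition explicitly: in
\[
T = \begin{bmatrix} F & -\upmu_k \\ \upmu_k^T & R \end{bmatrix},
\]
the diagonal blocks $F = T_k(x_1,\dots,x_n)$ and $R = T_k(x_{n+1},\dots,x_N)$ are themselves tridiagonal matrices of the form \eqref{tridiagser}. Hence, by the determinant computation recalled from \cite{cyranka_mucha} — equivalently, the recurrence $D_m = x_m D_{m-1} + \mu_k^2 D_{m-2}$, which keeps the determinant nonzero whenever the diagonal entries remain positive, as in our application where they equal $2k \ge 2$ — both $F$ and $R$ are invertible when $\mu_k \neq 0$.

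The key structural step is the observation that, since $\upmu_k$ is the $n\times(N-n)$ matrix whose only nonzero entry is $\mu_k$ in position $(n,1)$, a one-line index computation gives, for any $(N-n)\times(N-n)$ matrix $S$ and any $n\times n$ matrix $P$,
\[
\upmu_k\, S\, \upmu_k^T = S_{11}\,\upmu_k\upmu_k^T, \qquad \upmu_k^T\, P\, \upmu_k = P_{nn}\,\upmu_k^T\upmu_k,
\]
where $S_{11}$ and $P_{nn}$ are scalar entries. In particular $\upmu_k R^{-1}\upmu_k^T = R^{-1}_{11}\,\upmu_k\upmu_k^T$ and $\upmu_k^T F^{-1}\upmu_k = F^{-1}_{nn}\,\upmu_k^T\upmu_k$. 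Setting $S_D \bydef F + \upmu_k\upmu_k^T R^{-1}_{11}$ and $S_A \bydef R + \upmu_k^T\upmu_k F^{-1}_{nn}$ — the two Schur-complement blocks appearing in the statement — I would note that these are invertible: in the application they are $F$ and $R$ with a single diagonal entry increased by the positive amount $\mu_k^2 R^{-1}_{11}>0$, resp. $\mu_k^2 F^{-1}_{nn}>0$ (positivity of $R^{-1}_{11}$, $F^{-1}_{nn}$ follows again from the determinant recurrence with positive diagonal), hence still of the form \eqref{tridiagser} with positive diagonal; in full generality one simply assumes invertibility of these blocks, or deduces it from invertibility of $T$ together with $F$ and $R$.

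Finally I would verify the formula directly by checking that the matrix $G$ on the right-hand side of the claimed identity satisfies $TG = I$, block by block. With $G_{11} = S_D^{-1}$, $G_{12} = F^{-1}\upmu_k S_A^{-1}$, $G_{21} = -R^{-1}\upmu_k^T S_D^{-1}$, $G_{22} = S_A^{-1}$, the $(1,1)$ block of $TG$ equals $F S_D^{-1} + \upmu_k R^{-1}\upmu_k^T S_D^{-1} = (F + \upmu_k\upmu_k^T R^{-1}_{11})S_D^{-1} = I$; the $(2,2)$ block equals $\upmu_k^T F^{-1}\upmu_k S_A^{-1} + R S_A^{-1} = (\upmu_k^T\upmu_k F^{-1}_{nn} + R)S_A^{-1} = I$; and the $(1,2)$ and $(2,1)$ blocks collapse to $0$ after the immediate cancellations $\upmu_k S_A^{-1} - \upmu_k S_A^{-1}$ and $\upmu_k^T S_D^{-1} - \upmu_k^T S_D^{-1}$. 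Since $T$ is square, $TG = I$ forces $G = T^{-1}$. I do not expect a genuine obstacle: the only point requiring care is the bookkeeping of invertibility of $F$, $R$, $S_D$, $S_A$ (which is precisely why the single-nonzero-entry structure of $\upmu_k$ and the positivity of the diagonal $d_k = 2k$ are worth isolating first), after which the verification is a short and routine block multiplication. For completeness one cites \cite{schur} for the general provenance of the formula.
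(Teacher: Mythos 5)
Your proposal is correct and takes essentially the same approach as the paper: both hinge on the rank-one identities $\upmu_k R^{-1}\upmu_k^T = R^{-1}_{11}\,\upmu_k\upmu_k^T$ and $\upmu_k^T F^{-1}\upmu_k = F^{-1}_{nn}\,\upmu_k^T\upmu_k$, followed by a short block computation. The only difference is direction — you verify $TG=I$ for the claimed right-hand side $G$, while the paper writes the four block equations from $TT^{-1}=I$ and decouples them to derive the same expressions — which is the same calculation run in opposite directions.
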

\begin{proof}
We want to compute 
\[
T^{-1} = \begin{bmatrix} \inv_{11} & \inv_{12} \\ \inv_{21} & \inv_{22} \end{bmatrix},
\]
where $\inv_{11}, \inv_{12}, \inv_{21}, \inv_{22}$ are the blocks composing $T^{-1}$ of appropriate dimensions. 

Directly from the condition for the inverse matrix it follows that
\begin{equation*}
\begin{array}{cc}
F\inv_{11} - \mu_k\inv_{21} = I,&\upmu_k^T\inv_{11} + R\inv_{21} = 0,\\
F\inv_{12}-\upmu_k\inv_{22} = 0,&\upmu_k^T\inv_{12} + R\inv_{22} = I.
\end{array}
\end{equation*}
The presented formula for $\inv_{ij}$ can be obtained by noting.
\[
\upmu_kR^{-1}\upmu_k^T = \upmu_k\upmu_k^T(R^{-1}_{11}),\qquad \upmu_k^TF^{-1}\upmu_k = \upmu_k^T\upmu_k(F^{-1}_{nn}),
\]
and decoupling the solutions to the equations for $\inv_{11}, \inv_{12}, \inv_{21}, \inv_{22}$.
\end{proof}
\bigskip

\begin{definition}
\label{defaahat}
Let $\{d_j\}_{j=1}^N$ be the sequence of $T_k$ diagonal elements given by $d_j = 2j$, and let us define the following recursive sequences 
\begin{equation*}
\arraycolsep=2pt\def\arraystretch{1.4}
\begin{array}{lll}
a_j &=a_j(a_{j-1}) \bydef
\displaystyle
\frac{ d_{N-2j+2} + a_{j-1}\mu_k^2}{ d_{N-2j+1}d_{N-2j+2} + a_{j-1}d_{N-2j+1}\mu_k^2 + \mu_k^2}
\vspace{.3cm}
\\
\hat{a}_j &=\hat{a}_j(a_{j-1}) \bydef 
\displaystyle
\frac{ \mu_k }{ d_{N-2j+1}d_{N-2j+2} + a_{j-1}d_{N-2j+1}\mu_k^2 + \mu_k^2}
\vspace{.3cm}
\\
b_j &=b_j(b_{j-1}) \bydef
\displaystyle
\frac{ d_{2j-1} + b_{j-1}\mu_k^2}{ d_{2j}d_{2j-1}  + b_{j-1}d_{2j}\mu_k^2+\mu_k^2}
\end{array}
\end{equation*}
for $j=1,2,3,\dots,\frac{N}{2}$, with $a_0,\hat{a}_0,b_0 = 0$.
\end{definition}

\begin{lemma}
  \label{lemmono}
  Let $\{a_j\}_{j=1}^{\frac{N}{2}}$, $\{\hat{a}_j\}_{j=1}^{\frac{N}{2}}$, $\{b_j\}_{j=1}^{\frac{N}{2}}$ be the recursive sequences defined in Definition~\ref{defaahat}. It holds that
  \[
    a_j(a_{j-1}),\ b_j(b_{j-1})
  \]
  i.e. $a_j$ as a function of $a_{j-1}$ and $b_j$ as a function of $b_{j-1}$ are increasing $\text{ for }j=2,\dots,\frac{N}{2}$. And
  \[
    \hat{a}_j(a_{j-1})
  \]
  i.e. $\hat{a}_j$ as a function of $a_{j-1}$ is decreasing $\text{ for }j=2,\dots,\frac{N}{2}$.
\end{lemma}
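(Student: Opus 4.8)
The plan is to recognize that each of the three maps in Definition~\ref{defaahat}, viewed as a function of the single real variable $x$ that stands for $a_{j-1}$ (respectively $b_{j-1}$), is a fractional linear (Möbius) transformation $x \mapsto \frac{A+Bx}{C+Dx}$, or in the case of $\hat{a}_j$ the degenerate one $x \mapsto \frac{\mu_k}{C+Dx}$. For such a map the derivative is $\frac{BC-AD}{(C+Dx)^2}$, so, provided the denominator $C+Dx$ does not vanish on the relevant domain, monotonicity is decided entirely by the sign of the single number $BC-AD$.

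First I would fix $j\in\{2,\dots,\frac{N}{2}\}$, noting that for such $j$ the indices $N-2j+1,N-2j+2,2j-1,2j$ all lie in $\{1,\dots,N\}$, so $d_m=2m>0$ for every index $m$ appearing, and $\mu_k>0$ by the standing assumption of this section. A one-line induction on $j$ using Definition~\ref{defaahat} then gives $a_{j-1}\ge 0$ and $b_{j-1}\ge 0$: all three recursions have nonnegative numerator and strictly positive denominator at every step, starting from $a_0=\hat{a}_0=b_0=0$. Hence the denominator of each of the three maps is strictly positive for all $x\ge 0$, each map is smooth on $[0,\infty)$, and it suffices to compute the sign of the numerator of the derivative. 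For $a_j(x)=\frac{d_{N-2j+2}+x\mu_k^2}{d_{N-2j+1}d_{N-2j+2}+x\,d_{N-2j+1}\mu_k^2+\mu_k^2}$, writing $A=d_{N-2j+2}$, $B=\mu_k^2$, $C=d_{N-2j+1}d_{N-2j+2}+\mu_k^2$, $D=d_{N-2j+1}\mu_k^2$, one gets
\[
BC-AD=\mu_k^2\bigl(d_{N-2j+1}d_{N-2j+2}+\mu_k^2\bigr)-d_{N-2j+2}\,d_{N-2j+1}\,\mu_k^2=\mu_k^4>0,
\]
so $a_j$ is (strictly) increasing. The identical computation for $b_j(x)=\frac{d_{2j-1}+x\mu_k^2}{d_{2j}d_{2j-1}+x\,d_{2j}\mu_k^2+\mu_k^2}$ again yields $BC-AD=\mu_k^4>0$, so $b_j$ is increasing. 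Finally $\hat{a}_j(x)=\frac{\mu_k}{d_{N-2j+1}d_{N-2j+2}+x\,d_{N-2j+1}\mu_k^2+\mu_k^2}$ is $\mu_k>0$ divided by a strictly increasing, strictly positive affine function of $x$, hence strictly decreasing; explicitly $\hat{a}_j'(x)=-\mu_k\,d_{N-2j+1}\mu_k^2/(C+Dx)^2<0$.

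I expect no real obstacle beyond this bookkeeping. The only point that genuinely needs care is the positivity of the denominators on the domain where the sequences actually live — this is exactly why one first records $a_{j-1},b_{j-1}\ge 0$ and invokes $\mu_k>0$ — together with the slightly surprising cancellation $BC-AD=\mu_k^4$, which is in fact a pure algebraic identity requiring nothing about the $d_m$. (If $\mu_k=0$ the sequences are constant and the statement is vacuous, consistent with the standing assumption $\mu_k>0$.)
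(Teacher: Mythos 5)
Your proof is correct and follows essentially the same route as the paper: the paper directly states the derivative $\frac{d\,a_j}{d\,a_{j-1}}=\mu_k^4/(\cdots)^2$, which is precisely the $\frac{BC-AD}{(C+Dx)^2}$ you compute once $BC-AD=\mu_k^4$ is identified, and likewise for $b_j$ and $\hat a_j$. Your additional care in recording $a_{j-1},b_{j-1}\ge 0$ so that the denominators stay strictly positive is a small but welcome bit of extra rigor that the paper leaves implicit.
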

\begin{proof}
  We treat $a_{j-1}$, $b_{j-1}$ as (continuous) variables in the formulas given in Definition~\ref{defaahat}, and compute
  \begin{align*}
    \frac{d\,a_j}{d\,a_{j-1}} &= \frac{\mu_k^4}{\left( d_{N-2j+1}d_{N-2j+2} + a_{j-1}d_{N-2j+1}\mu_k^2 + \mu_k^2\right)^2} > 0,\\
    \frac{d\,b_j}{d\,b_{j-1}} &= \frac{\mu_k^4}{\left( d_{2j}d_{2j-1} + b_{j-1}d_{2j}\mu_k^2+\mu_k^2\right)^2} > 0,\\
    \frac{d\,\hat{a}_j}{d\,a_{j-1}} &< 0. \qedhere
  \end{align*}
  
\end{proof}
\begin{lemma}
  \label{lema}
  Let $\{a_j\}_{j=1}^{\frac{N}{2}}$, $\{\hat{a}_j\}_{j=1}^{\frac{N}{2}}$, $\{b_j\}_{j=1}^{\frac{N}{2}}$ be the sequences defined in Definition~\ref{defaahat}. It holds that
  \[
    0 < a_j < \frac{\sqrt{2}}{\mu_k},\qquad 0 < \hat{a}_j < \frac{1}{\mu_k}, \qquad 0<b_j < \frac{1}{\mu_k}
  \]
  for all $j=1,\dots,\frac{N}{2}$.
\end{lemma}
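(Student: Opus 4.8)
The plan is to establish all three estimates by induction on $j$, leaning on the monotonicity recorded in Lemma~\ref{lemmono}. Observe first that the recursion for $a_j$ (hence also $\hat{a}_j$) involves only $a_{j-1}$, while that for $b_j$ involves only $b_{j-1}$, so the argument decouples into an induction for the pair $(a_j,\hat{a}_j)$ and an independent induction for $b_j$. Positivity is immediate at every step: since $d_i = 2i > 0$, $\mu_k > 0$ (assumed throughout this section), and $a_{j-1},b_{j-1} \ge 0$ with base values $0$, every numerator and denominator in Definition~\ref{defaahat} is strictly positive, hence $a_j,\hat{a}_j,b_j > 0$.

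For the upper bound on $a_j$, I would assume inductively that $a_{j-1} < \sqrt{2}/\mu_k$ (true for $j=1$ since $a_0=0$). By Lemma~\ref{lemmono}, $a_j$ is an increasing function of $a_{j-1}$, so it suffices to check $a_j(\sqrt{2}/\mu_k) < \sqrt{2}/\mu_k$. Substituting and clearing the positive denominator, this is equivalent to $\mu_k d_{N-2j+2} < \sqrt{2}\, d_{N-2j+1} d_{N-2j+2} + 2\,d_{N-2j+1}\mu_k$, i.e., using $d_i = 2i$, to $-\mu_k (N-2j) < 2\sqrt{2}\,(N-2j+1)(N-2j+2)$. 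Since $j \le N/2$ the left-hand side is $\le 0$ while the right-hand side is strictly positive, so the inequality holds and the induction closes. (The case $j=1$ can equally well be checked by direct substitution, giving the quadratic-in-$\mu_k$ inequality $\sqrt{2}\mu_k^2 - 2N\mu_k + 4\sqrt{2} N(N-1) > 0$, which holds because its discriminant is negative for $N\ge 2$.)

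The bound for $\hat{a}_j$ is then essentially free: by Lemma~\ref{lemmono}, $\hat{a}_j$ is a decreasing function of $a_{j-1} \ge 0$, so $\hat{a}_j \le \hat{a}_j(0) = \mu_k/(d_{N-2j+1}d_{N-2j+2}+\mu_k^2) < \mu_k/\mu_k^2 = 1/\mu_k$. The bound for $b_j$ mirrors the one for $a_j$: assuming $b_{j-1} < 1/\mu_k$ and using that $b_j$ is increasing in $b_{j-1}$, one reduces $b_j(1/\mu_k) < 1/\mu_k$ to $\mu_k d_{2j-1} < d_{2j}\mu_k + d_{2j}d_{2j-1}$, which holds since $d_{2j} > d_{2j-1} > 0$. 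This completes the induction.

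I do not expect a genuine obstacle here; the two points that need mild care are (i) keeping all subscripts in range — for $1 \le j \le N/2$ the indices $N-2j+1,\,N-2j+2,\,2j-1,\,2j$ all lie in $\{1,\dots,N\}$, so $d_i=2i$ is legitimate — and (ii) recognizing why the constant is exactly $\sqrt{2}$: with a general constant $c$ in place of $\sqrt{2}$ the reduced inequality for $a_j$ becomes $\mu_k\bigl[(N-2j+2) - c^2(N-2j+1)\bigr] < 2c\,(N-2j+1)(N-2j+2)$, and the requirement $c^2 \ge 2$ is precisely what forces the bracket to be $\le 0$ for every $j\le N/2$, hence uniform validity over all $\mu_k>0$.
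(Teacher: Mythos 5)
Your proposal is correct and follows essentially the same route as the paper: positivity from the form of the recursions, then an induction that uses the monotonicity of $a_j$, $\hat a_j$, $b_j$ in the previous term (Lemma~\ref{lemmono}) to reduce each bound to a check at the boundary value, clearing the denominator and simplifying with $d_i = 2i$. The paper phrases the $a_j$ step as ``find the minimal $\alpha$'' and verifies the worst case $j = N/2$, whereas you simply observe that with $\alpha=\sqrt{2}$ the reduced left side $-\mu_k(N-2j)$ is $\le 0$ while the right side is positive for all $j \le N/2$; these are the same computation, and your side remark about why $\sqrt2$ is forced matches the paper's worst-case analysis.
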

\begin{proof}
The bound $a_j>0$ is obvious. We proceed by induction. First, we show that $a_1=\frac{ d_N }{ d_{N-1}d_N +\mu_k^2} < \frac{\alpha}{\mu_k}$, 
which is satisfied for $\alpha\geq 1$, as $0< \alpha d_{N-1}d_N  + \alpha\mu_k^2- d_N\mu_k$. We find $\alpha\geq 1$, such that assuming $a_{j-1} < \frac{\alpha}{\mu_k}$, we show $a_j<\frac{\alpha}{\mu_k}$. Using the monotonicity property from Lemma~\ref{lemmono} we plug in $a_{j-1} = \frac{\alpha}{\mu_k}$ in the formula for $a_j(a_{j-1})$ (Def.~\ref{defaahat}) and find minimal $\alpha$ satisfying the inequality
\[
    a_j < a_j\left(\frac{\alpha}{\mu_k}\right) < \frac{ d_{N-2j+2} + \alpha\mu_k}{ d_{N-2j+1}d_{N-2j+2} + \alpha d_{N-2j+1}\mu_k + \mu_k^2} < \frac{\alpha}{\mu_k},
\] 
which is satisfied when
$0 < \alpha d_{N-2j+1}d_{N-2j+2} + \mu_k(\alpha^2 d_{N-2j+1}-d_{N-2j+2})$,
  and hence
  \begin{equation}
    \label{eqineq2}
    0 < \alpha^2 d_{N-2j+1}-d_{N-2j+2} = \alpha^22(N-2j+1) - 2(N-2j+2).
  \end{equation}
  By analyzing the worst case $(j = \frac{N}{2})$ \eqref{eqineq2} holds for $\alpha \geq \sqrt{2}$. The minimal value of $\alpha$ such that $a_j < \frac{\alpha}{\mu_k}$ is $\alpha = \sqrt{2}$. 
  
  In order to show the second inequality, first notice that
$\hat{a}_1 = \frac{\mu_k}{ d_{N-1}d_N  + \mu_k^2} = \frac{\mu_k}{d_{N-1}d_N + \mu_k^2} < \frac{1}{\mu_k}\text{ for }N> 1$.
  By induction we show that $0< \hat{a}_j< \frac{1}{\mu_k}$. Due to the monotonicity property from Lemma~\ref{lemmono}, we plug in $a_{j-1} = 0$ in the formula for $\hat a_j$ and obtain
  \[
    0 < \hat{a}_j < \hat{a}_j(0)= \frac{ \mu_k }{ d_{N-2j+1}d_{N-2j+2} +  \mu_k^2} < \frac{1}{\mu_k}\text{ for }N> 1.
  \]
Now let us turn into the third inequality, we again proceed by induction. $b_1 = \frac{d_1}{d_2d_1 + \mu_k^2} < \frac{1}{\mu_k}$ is easy to verify. We show that $0 < b_j < \frac{1}{\mu_k}$. Using the monotonicity property from Lemma~\ref{lemmono} we plug in $b_{\frac{N-2j-2}{2}} = \frac{1}{\mu_k}$ in the formula for $b_{\frac{N-2j}{2}}$ and obtain
\[
0 < b_{\frac{N-2j}{2}} < b_{\frac{N-2j}{2}}\left(\frac{1}{\mu_k}\right) < \frac{d_{N-2j+1} + \mu}{d_{N-2j+1}d_{N-2j+2} + d_{N-2j+2}\mu + \mu^2} < \frac{1}{\mu_k},
\]
satisfied when
$0 < d_{N-2j+1}d_{N-2j+2} + (d_{N-2j+2}-d_{N-2j+1})\mu$,
which is clearly true for all $j=1,\dots,\frac{N}{2}$.
\end{proof}
%
%
%
%
%
%
%
%
%
%
\begin{lemma}
  \label{lema2}
  Let $\{a_j\}_{j=1}^{\frac{N}{2}}$, $\{\hat{a}_j\}_{j=1}^{\frac{N}{2}}$  be the sequences defined in Definition~\ref{defaahat}.
  Also let $N > 2^p\mu_k$ and even, $p>0$. It holds that
  \[
    a_j \leq \frac{1}{2^{p+1}\mu_k},\ b_{\frac{N}{2}-j+1}\leq \frac{1}{2^{p+1}\mu_k}\text{, and }\hat{a}_j \leq \frac{1}{(2^{2p+2} +1)\mu_k} 
  \]
  for all $j$ such that 
  \begin{equation}
    \label{muineq}
    N-2j+1 \geq 2^p\mu_k\qquad \left(\text{i.e., for all} ~ j \leq \frac{N}{2} - \mu_k + 1\right).
    \end{equation}
\end{lemma}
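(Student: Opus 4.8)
The plan is to prove the three inequalities separately, in each case combining the monotonicity statements of Lemma~\ref{lemmono}, the a priori bounds of Lemma~\ref{lema}, and the elementary remark that the set of admissible indices $\{j\ge 1 : N-2j+1\ge 2^p\mu_k\}$ is an \emph{initial segment} $\{1,\dots,j^\star\}$ (because $N-2j+1$ decreases in $j$), on which $d_{N-2j+1}=2(N-2j+1)\ge 2^{p+1}\mu_k$ and hence also $d_{N-2j+2}=d_{N-2j+1}+2>2^{p+1}\mu_k$.

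For the bound on $a_j$ I would induct on $j$ along this initial segment, starting from $a_0=0$. In the inductive step, $j-1$ also belongs to the segment (or $a_{j-1}=a_0=0$), so $a_{j-1}\le\frac{1}{2^{p+1}\mu_k}$ is available; since $a_j(\cdot)$ is increasing (Lemma~\ref{lemmono}), replacing $a_{j-1}$ by $\frac{1}{2^{p+1}\mu_k}$ in the formula of Definition~\ref{defaahat} and clearing denominators reduces $a_j\le\frac{1}{2^{p+1}\mu_k}$ to $2^{p+1}\mu_k\,d_{N-2j+2}\le d_{N-2j+1}d_{N-2j+2}$, i.e.\ to $2^p\mu_k\le N-2j+1$, which is exactly \eqref{muineq}. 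The base case $j=1$, where Lemma~\ref{lemmono} is not asserted, is handled directly from $a_1=d_N/(d_{N-1}d_N+\mu_k^2)$ and $d_{N-1}\ge 2^{p+1}\mu_k$.

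The bound on $\hat a_j$ is immediate: $\hat a_j(\cdot)$ is \emph{decreasing} (Lemma~\ref{lemmono}) and $a_{j-1}\ge 0$ (Lemma~\ref{lema}), so $\hat a_j\le\hat a_j(0)=\mu_k/(d_{N-2j+1}d_{N-2j+2}+\mu_k^2)$, and $d_{N-2j+1}d_{N-2j+2}\ge(2^{p+1}\mu_k)^2=2^{2p+2}\mu_k^2$ gives $\hat a_j\le\frac{1}{(2^{2p+2}+1)\mu_k}$. The bound on $b_{\frac{N}{2}-j+1}$ is the delicate case, because the recursion $\{b_m\}$ is built \emph{from} $b_0=0$ \emph{towards} increasing $m$, i.e.\ against the direction of the admissible segment, so one cannot descend inductively to a base case while keeping the sharp hypothesis. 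Instead I would use only the crude bound $b_{m-1}<\frac{1}{\mu_k}$ of Lemma~\ref{lema} together with the monotonicity of $b_m(\cdot)$ to get $b_m\le b_m(1/\mu_k)=(d_{2m-1}+\mu_k)/(d_{2m}d_{2m-1}+d_{2m}\mu_k+\mu_k^2)$, and then show that this single step already falls below $\frac{1}{2^{p+1}\mu_k}$. With $m=\frac{N}{2}-j+1$ one has $d_{2m-1}=d_{N-2j+1}\ge 2^{p+1}\mu_k$ and $d_{2m}=d_{2m-1}+2$, so clearing denominators and using $\mu_k\le d_{2m-1}/2^{p+1}$ to absorb the unfavourable term $(2^{p+1}-1)\mu_k^2$, the required inequality collapses to an expression of the form $8m-4+2\mu_k>0$, valid for every $m\ge 1$.

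The main obstacle is exactly this last point: the $b$-recursion is oriented against the region where the refined estimate holds, so the argument cannot be a clean descending induction and must instead quantify the contraction achieved by one application of the recursion starting from the universal bound $1/\mu_k$; controlling the term $(2^{p+1}-1)\mu_k^2$ via $\mu_k\le d_{2m-1}/2^{p+1}$ is the crux of the bookkeeping. A minor cosmetic point is that Lemma~\ref{lemmono} only covers $j\ge 2$, so in each of the three sequences the smallest admissible index must be disposed of by a direct computation from the closed forms in Definition~\ref{defaahat}.
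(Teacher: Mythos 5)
Your proof is correct, and for two of the three bounds ($\hat a_j$ and $b_{\frac{N}{2}-j+1}$) it coincides with the paper's: both drop the nonnegative $a_{j-1}d_{N-2j+1}\mu_k^2$ term (or use monotonicity, which is the same thing) for $\hat a_j$, and both handle $b$ by a single substitution of the crude bound $1/\mu_k$ from Lemma~\ref{lema} into the recursion; your reduction of that last inequality to $8m-4+2\mu_k>0$ after absorbing $(2^{p+1}-1)\mu_k^2$ via $\mu_k\le d_{2m-1}/2^{p+1}$ checks out, and your remark that a descending induction on $b$ is structurally unavailable, while not made explicit in the paper, is a fair explanation of why the one-shot argument is the natural one. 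Where you genuinely diverge is the bound on $a_j$: the paper substitutes the crude Lemma~\ref{lema} bound $a_{j-1}<\sqrt{2}/\mu_k$ into $a_j(\cdot)$, reduces to a quadratic inequality $g(\beta)>0$ with $\beta=d_{N-2j+1}$, establishes monotonicity of $g$ on the relevant range, and evaluates $g(2^{p+1}\mu_k)=\mu_k^2$; you instead run a forward induction along the admissible initial segment with the \emph{target} bound $1/(2^{p+1}\mu_k)$ as hypothesis, so that after clearing denominators the step inequality collapses directly to $2^{p+1}\mu_k\le d_{N-2j+1}$, i.e.\ condition~\eqref{muineq}. Both are correct; your route eliminates the $\sqrt{2}$-bookkeeping and the auxiliary function $g$, at the cost of an explicit induction (with the $j=1$ base case dispatched directly, since Lemma~\ref{lemmono} is stated only for $j\ge 2$). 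This is arguably tidier, and the initial-segment observation that licenses the induction for $a_j$ but not for $b$ is a nice structural point the paper leaves implicit.
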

\begin{proof}
Using the upper bound of Lemma~\ref{lema}, we put $a_{j-1}=\frac{\sqrt{2}}{\mu_k}$ in the formula $a_{j}(a_{j-1})$ (which increases in $a_{j-1}$), and denote $\beta \bydef d_{N-2j+1} = 2(N-2j+1) = 2N-4j+2$, then $d_{N-2j+2} = \beta+2$. We verify that
  \[
  a_{j}(\frac{\sqrt{2}}{\mu_k}) = 
    \frac{ \beta+2 + \sqrt{2}\mu_k}{ \beta(\beta+2) + \sqrt{2}\beta \mu_k + \mu_k^2} < \frac{1}{2^{p+1}\mu_k},
  \]
  which reduces to
  \begin{equation}
    \label{muineq2}
    g(\beta) \bydef \beta(\beta+2) + \sqrt{2}\beta\mu_k + \mu_k^2 - 2^{p+1}\beta\mu_k - 2^{p+2}\mu_k - 2^{p+1}\sqrt{2}\mu_k^2 > 0.
  \end{equation}
  The function $g(\beta)$ is increasing for all $\beta \ge \left( 2^p - \frac{\sqrt{2}}{2}\right) \mu_k-1$, that is for all $j$ such that  $2(N-2j+1) \ge \left( 2^p - \frac{\sqrt{2}}{2}\right) \mu_k-1$, a condition which is ensured by assumption \eqref{muineq}.
  Plugging $\beta=2^{p+1}\mu_k \ge \left( 2^p - \frac{\sqrt{2}}{2}\right) \mu_k-1$ in \eqref{muineq2} leads to \[
  0<\mu_k^2 = g(2^{p+1}\mu_k) \le g(\beta)\]
  for all $\beta \geq 2^{p+1}\mu_k$, that is for all $j$ such that
  $2(N-2j+1) \geq 2^{p+1}\mu_k$, which follows by \eqref{muineq}. 
  
  Analogous computation shows the bound $b_{\frac{N}{2}-j+1}\leq \frac{1}{2^{p+1}\mu_k}$, using the upper bound of Lemma~\ref{lema}, and putting $b_{\frac{N}{2}-j}=\frac{1}{\mu_k}$ in the formula $b_{\frac{N}{2}-j+1}(b_{\frac{N}{2}-j})$. Regarding the bound for $\hat{a}_j$ we have
  \begin{multline*}
  \hat{a}_j = \frac{ \mu_k }{ d_{N-2j+1}d_{N-2j+2} + a_{j-1}d_{N-2j+1}\mu_k^2 + \mu_k^2} \leq \frac{\mu_k}{\beta(\beta + 2) + \mu_k^2} 
  <\\
  \frac{\mu_k}{2^{p+1}\mu_k(2^{p+1}\mu_k+2) + \mu_k^2} = 
  \frac{1}{(2^{2p+2}+1) \mu_k + 2^{p+2}}. \qedhere
  \end{multline*}
\end{proof}

Our eventual estimate for $T^{-1}$ relies on computing all of its components explicitly and then bounding the resulting sum.
The explicit formulas for $T^{-1}$ are build using the recursive formulas presented in Definition~\ref{defaahat}.
\begin{lemma}
\label{lemblocks}
Let $n < \frac{N}{2}$. The $N-2n\times N-2n$ top left corner submatrix of $T^{-1}$ is given explicitly by
\[
T^{-1}(d_1, d_2, \dots, d_{N-2n} + \mu_k^2a_n),
\]
whereas the $2n\times 2n$ bottom right corner  submatrix of $T^{-1}$ is given explicitly by
\[
T^{-1}(d_{N-2n+1} + \mu_k^2b_{\frac{N-2n}{2}}, d_{N-2n+2},\dots, d_{N}),
\]
where $a_j$ and $b_{\frac{N-2j-2}{2}}$ are elements of the recursive series from Definition~\ref{defaahat}.
\end{lemma}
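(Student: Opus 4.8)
The plan is to reduce both formulas to the block–inversion identity of Lemma~\ref{lemblock}, after first identifying the recursive sequences $a_j$ and $b_j$ of Definition~\ref{defaahat} with individual entries of the inverses of corner submatrices of $T$. Throughout, recall $d_j=2j$, and for $1\le j\le\frac{N}{2}$ I would write $S_j\bydef T(d_{N-2j+1},d_{N-2j+2},\dots,d_N)$ for the $2j\times 2j$ bottom-right principal submatrix of $T$ and $P_j\bydef T(d_1,d_2,\dots,d_{2j})$ for the $2j\times 2j$ top-left principal submatrix. All matrices entering the argument (including the ones with a single modified diagonal entry that appear below) are invertible by the easy determinant computation for tridiagonal matrices of the form \eqref{tridiagser}; in fact invertibility of the modified matrices will be automatic, since they turn out to be inverses of submatrices of $T^{-1}$.

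First I would prove, by induction on $j$, that $[S_j^{-1}]_{1,1}=a_j$ for all $1\le j\le\frac{N}{2}$. The base case $j=1$ is the direct computation $[S_1^{-1}]_{1,1}=d_N/(d_{N-1}d_N+\mu_k^2)=a_1$ (using $a_0=0$). For $j\ge 2$ I would apply Lemma~\ref{lemblock} to $S_j$ decomposed into its top-left $2\times 2$ block $F$ (with diagonal $d_{N-2j+1},d_{N-2j+2}$) and its bottom-right $(2j-2)\times(2j-2)$ block, the point being that the latter equals $T(d_{N-2j+3},\dots,d_N)=S_{j-1}$, so the induction hypothesis gives its $(1,1)$ entry as $a_{j-1}$. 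Lemma~\ref{lemblock} then identifies the top-left $2\times 2$ block of $S_j^{-1}$ with the inverse of the $2\times 2$ matrix obtained from $F$ by adding $\mu_k^2 a_{j-1}$ to its $(2,2)$ entry, whose $(1,1)$ entry is, term for term, the expression defining $a_j$ in Definition~\ref{defaahat}. Symmetrically, I would prove $[P_j^{-1}]_{2j,2j}=b_j$ by induction on $j$: the base case is $[P_1^{-1}]_{2,2}=d_1/(d_1d_2+\mu_k^2)=b_1$, and for $j\ge 2$ one decomposes $P_j$ into its top-left $(2j-2)\times(2j-2)$ block, which equals $P_{j-1}$, and its bottom-right $2\times 2$ block (diagonal $d_{2j-1},d_{2j}$); Lemma~\ref{lemblock} identifies the bottom-right $2\times 2$ block of $P_j^{-1}$ with the inverse of the matrix obtained from that block by adding $\mu_k^2 b_{j-1}$ to its $(1,1)$ entry, and its $(2,2)$ entry reproduces the recursion for $b_j$.

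With these two identities established, the lemma follows from one final application of Lemma~\ref{lemblock}, this time to $T$ itself, decomposed into a top-left $(N-2n)\times(N-2n)$ part, namely $P_{(N-2n)/2}=T(d_1,\dots,d_{N-2n})$, and a bottom-right $2n\times 2n$ part, namely $S_n=T(d_{N-2n+1},\dots,d_N)$. In the notation of that lemma, $\upmu_k\upmu_k^T$ is then the $(N-2n)\times(N-2n)$ matrix whose only nonzero entry $\mu_k^2$ sits in position $(N-2n,N-2n)$, and $\upmu_k^T\upmu_k$ is the $2n\times 2n$ matrix whose only nonzero entry $\mu_k^2$ sits in position $(1,1)$. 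Hence Lemma~\ref{lemblock} gives that the top-left $(N-2n)\times(N-2n)$ block of $T^{-1}$ equals $\bigl(P_{(N-2n)/2}+\upmu_k\upmu_k^T[S_n^{-1}]_{1,1}\bigr)^{-1}$, which by the first identity is $T^{-1}(d_1,\dots,d_{N-2n-1},d_{N-2n}+\mu_k^2 a_n)$, and that the bottom-right $2n\times 2n$ block of $T^{-1}$ equals $\bigl(S_n+\upmu_k^T\upmu_k[P_{(N-2n)/2}^{-1}]_{N-2n,N-2n}\bigr)^{-1}$, which by the second identity is $T^{-1}(d_{N-2n+1}+\mu_k^2 b_{(N-2n)/2},d_{N-2n+2},\dots,d_N)$. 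Since $N$ is even and $n<\frac{N}{2}$, the block sizes $N-2n$ and $2n$ are both even and positive, so they are compatible with the even-size hypothesis of Lemma~\ref{lemblock} and with the parity built into Definition~\ref{defaahat}.

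The argument is essentially bookkeeping once Lemma~\ref{lemblock} is available; the part I expect to be the main obstacle is the index matching: checking that the bottom-right $(2j-2)$-corner of $S_j$ is literally $S_{j-1}$ (and the analogous statement for $P_j$) so that the two inductions close, and verifying that the relevant entries of the inverted $2\times 2$ blocks coincide, symbol for symbol, with the recursions for $a_j$ and $b_j$ in Definition~\ref{defaahat}, including handling the degenerate base cases $j=1$ where the $2+(2j-2)$ splitting does not apply.
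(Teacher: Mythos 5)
Your proof is correct and takes the same approach as the paper: apply Lemma~\ref{lemblock} to the $(N-2n)+2n$ block splitting of $T$, after identifying $R^{-1}_{11}$ (top-left entry of the bottom-right block inverse) with $a_n$ and $F^{-1}_{N-2n,N-2n}$ (bottom-right entry of the top-left block inverse) with $b_{(N-2n)/2}$. The paper's own proof is a one-line citation that defers the recursion computation to \cite{cyranka_mucha}; your two inductions $[S_j^{-1}]_{1,1}=a_j$ and $[P_j^{-1}]_{2j,2j}=b_j$ (each again via Lemma~\ref{lemblock} with a $2+(2j-2)$ or $(2j-2)+2$ split) are exactly the content of that omitted computation, and they check out term for term against Definition~\ref{defaahat}.
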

\begin{proof}
Follows from Lemma~\ref{lemblock}
with $R=T\left(d_1, d_2, \dots, d_{N-2n}\right)$, and $F=T(d_{N-2n+1}, d_{N-2n+2},\dots, d_{N})$, and computing the recursion for the elements $R^{-1}_{11}$ and $F^{-1}_{nn}$. For the detailed proof refer to \cite{cyranka_mucha}.
\end{proof}

We have the following corollary from Lemma~\ref{lemblocks}. In the proofs in the sequel we will often use the explicit form of $T_k^{-1}$ diagonal blocks given by
\begin{corollary}
\label{corblocks}
The $2\times 2$ dimensional diagonal blocks of $T^{-1}$ are given by
\begin{equation}
\label{eq:I}
\widetilde{I}\bydef 
  \begin{bmatrix}
    d_{N-2n+1}+\mu_k^2b_{\frac{N-2n}{2}} & \mu_k \\ -\mu_k & d_{N-2n+2} +\mu_k^2a_{n-1}       
  \end{bmatrix}^{-1} = \frac{1}{D}\begin{bmatrix}
d_{N-2n+2} + \mu_k^2 a_{n - 1} & \mu_k \\
 -\mu_k & d_{N-2n+1} + \mu_k^2b_{\frac{N-2n}{2}}
\end{bmatrix}
\end{equation}
for $n=1,\dots,\frac{N}{2}$, where $a_{n-1}$ and $b_{\frac{N-2n}{2}}$ are elements of the recursive series from Definition~\ref{defaahat}, $D=(d_{N-2n+2} + \mu_k^2 a_{n - 1})(d_{N-2n+1} + \mu_k^2b_{\frac{N-2n}{2}}) + \mu_k^2$ is the determinant of $\widetilde{I}$.

The $n$-th diagonal block above corresponds to \[
T^{-1}_{\ro{N-2n+1:N-2n+2},\co{N-2n+1:N-2n+2}}.\] 
In particular $1$-st block denotes the bottom-right diagonal block of $T^{-1}$.
\end{corollary}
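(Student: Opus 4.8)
The plan is to obtain the $n$-th diagonal $2\times 2$ block of $T^{-1}$ by viewing it as the ``centre'' of $T$ and peeling away the rows and columns lying on either side of it, using Lemma~\ref{lemblocks} (which is itself built from the block-inversion Lemma~\ref{lemblock}). Concretely, I would first cut off the last $2n-2$ rows and columns of $T$, reducing to a tridiagonal matrix $H$ of size $N-2n+2$ whose inverse equals the top-left $(N-2n+2)\times(N-2n+2)$ corner submatrix of $T^{-1}$; then cut off the first $N-2n$ rows and columns of $H$, landing on a $2\times 2$ tridiagonal matrix whose inverse is the block we want. This two-sided peeling lands exactly on the target because the indices $N-2n+1$ and $N-2n+2$ are simultaneously the last two indices of the leading size-$(N-2n+2)$ block of $T$.

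\emph{Step one.} Apply Lemma~\ref{lemblocks} with parameter $n-1$ in place of its $n$ (valid for $1\le n\le \tfrac N2$; for $n=1$ this step is vacuous since $a_0=0$). It gives that the top-left $(N-2n+2)\times(N-2n+2)$ corner submatrix of $T^{-1}$ equals $H^{-1}$, where $H\bydef T(d_1,\dots,d_{N-2n+1},\,d_{N-2n+2}+\mu_k^2 a_{n-1})$ and $a_{n-1}$ is the element of the recursion of Definition~\ref{defaahat}. Since $N-2n+1,N-2n+2$ are the last two indices of this block, $T^{-1}_{\ro{N-2n+1:N-2n+2},\co{N-2n+1:N-2n+2}}$ is precisely the bottom-right $2\times2$ block of $H^{-1}$. \emph{Step two.} Apply the bottom-right-corner statement of Lemma~\ref{lemblocks}, now to the $(N-2n+2)\times(N-2n+2)$ matrix $H$, extracting its bottom-right $2\times2$ block (for $n=\tfrac N2$, $H$ is already $2\times 2$ and this step is vacuous). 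This yields that the block equals $T^{-1}\!\left(d_{N-2n+1}+\mu_k^2 b_{\frac{N-2n}{2}},\ d_{N-2n+2}+\mu_k^2 a_{n-1}\right)$. The one place needing care is that the $b$-recursion appearing here is the one of Definition~\ref{defaahat} computed for $T$, not the one for $H$; this is legitimate because $b_j$ depends only on the diagonal entries $d_1,\dots,d_{2j}$, and for the relevant index $j=\tfrac{N-2n}{2}$ one has $2j=N-2n<N-2n+2$, so $H$ and $T$ agree on every entry the recursion sees. Hence the block equals $\widetilde I$ as in the statement.

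Finally it remains to invert the $2\times 2$ tridiagonal matrix $\begin{bmatrix} p & -\mu_k \\ \mu_k & q \end{bmatrix}$, with $p\bydef d_{N-2n+1}+\mu_k^2 b_{\frac{N-2n}{2}}$ and $q\bydef d_{N-2n+2}+\mu_k^2 a_{n-1}$ (off-diagonal signs as in \eqref{tridiagser}): by Lemma~\ref{lema} one has $a_{n-1},b_{\frac{N-2n}{2}}\ge 0$ and $d_j=2j>0$, so $D=pq+\mu_k^2>0$ is the determinant and the inverse is $\tfrac1D\begin{bmatrix} q & \mu_k \\ -\mu_k & p \end{bmatrix}$, which is exactly the closed form asserted for $\widetilde I$. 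I expect the only real obstacle to be the index bookkeeping — correctly matching $a_{n-1}$ to the truncation from the bottom and $b_{\frac{N-2n}{2}}$ to the truncation from the top, and tracking the size-$(N-2n+2)$ intermediate matrix — together with the localization observation that each recursion is insensitive to a truncation performed at the opposite end; all of the substantive algebra is already contained in Lemmas~\ref{lemblock} and~\ref{lemblocks} and in \cite{cyranka_mucha}.
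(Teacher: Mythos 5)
Your proof is correct and follows the paper's intended route (the corollary is left in the paper as an immediate consequence of Lemma~\ref{lemblocks}): you apply Lemma~\ref{lemblocks} with parameter $n-1$ to isolate the leading size-$(N-2n+2)$ corner block $H^{-1}$, then extract its bottom-right $2\times2$ submatrix by the same mechanism, and your locality observation --- that the $b$-recursion up to index $\tfrac{N-2n}{2}$ only sees the entries $d_1,\dots,d_{N-2n}$, which $H$ shares with $T$ --- is exactly the justification needed for the second peel (strictly one invokes Lemma~\ref{lemblock} for $H$ rather than Lemma~\ref{lemblocks} verbatim, since $H$ has a modified last diagonal entry, but you effectively address this). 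A small remark: you invert $\left(\begin{smallmatrix}p&-\mu_k\\ \mu_k&q\end{smallmatrix}\right)$ with the off-diagonal signs of \eqref{tridiagser}, which recovers the right-hand side of \eqref{eq:I} exactly; the off-diagonal signs printed inside the matrix being inverted on the left-hand side of \eqref{eq:I} are flipped relative to this, a typo that is harmless downstream since only the absolute values $|\widetilde I_{ij}|$ are ever used.
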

\begin{theorem}
\label{thm:explicit}
Let $\widetilde{I}$ be the block \eqref{eq:I}. Consider the $2n\times 2n$ dimensional bottom right corner square submatrix of $T^{-1}$ 
\[
\widetilde{T}^{-1} \bydef T^{-1}(d_{N-2n+1} + \mu_k^2b_{\frac{N-2n}{2}},\ d_{N-2n+2},\ \dots,\ d_{N}).
\]
The following recursive explicit formulas hold
\begin{align*}
    \widetilde{T}^{-1}_{\ro{1:2},\co{1:2}} &= \widetilde{I},\\
    \widetilde{T}^{-1}_{\ro{3:4},\co{1:2}} &= \mu_k\begin{bmatrix}-a_{n-1}\\\hat{a}_{n-1}\end{bmatrix}\widetilde{T}^{-1}_{\ro{2},\co{1:2}},\\
    \vdots &\\
    \widetilde{T}^{-1}_{\ro{2j+1:2j+2},\co{1:2}} &= \mu_k\begin{bmatrix}-a_{n-j}\\\hat{a}_{n-j}\end{bmatrix}\widetilde{T}^{-1}_{\ro{2j},\co{1:2}}. 
\end{align*}
\end{theorem}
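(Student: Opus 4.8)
The plan is to prove the recursive formulas in Theorem~\ref{thm:explicit} by induction on the block index $j$, using Lemma~\ref{lemblock} (the Banachiewicz/Schur complement inversion formula) at each step to ``peel off'' a $2\times2$ diagonal block from the bottom-right corner of the relevant truncated tridiagonal matrix. The base case $\widetilde{T}^{-1}_{\ro{1:2},\co{1:2}} = \widetilde{I}$ is exactly the content of Corollary~\ref{corblocks}: the bottom-right $2\times 2$ diagonal block of $T^{-1}$ (equivalently, the $1$-st diagonal block of the submatrix $\widetilde T^{-1}$) is the inverse of the $2\times2$ matrix displayed in \eqref{eq:I}, where the perturbed diagonal entries $d_{N-2n+1}+\mu_k^2 b_{\frac{N-2n}{2}}$ and $d_{N-2n+2}+\mu_k^2 a_{n-1}$ encode the Schur-complement corrections coming from the blocks that have already been removed (this is precisely Lemma~\ref{lemblocks}).

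For the inductive step, I would decompose the matrix $\widetilde T^{-1}$ via Lemma~\ref{lemblock} by splitting off its last two rows and columns, i.e. taking $n=2$ in the lemma's notation so that $F$ is the top-left $(2n-2)\times(2n-2)$ block and $R$ is the bottom-right $2\times2$ block. The lemma then expresses the off-diagonal block $\inv_{21}$ (the lower-left block of the inverse) as $-R^{-1}\upmu_k^T(F+\upmu_k\upmu_k^T R^{-1}_{11})^{-1}$. The key observation is that $\upmu_k^T$ has a single nonzero entry ($\mu_k$) in its top-right corner, so $R^{-1}\upmu_k^T$ picks out only the first column of $R^{-1}$ scaled by $\mu_k$, and that first column — via the $2\times 2$ formula in \eqref{eq:I} applied to the appropriately perturbed block — is exactly $\begin{bmatrix}-a_{n-j}\\ \hat a_{n-j}\end{bmatrix}$ up to the scalar factor appearing in Corollary~\ref{corblocks}. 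Meanwhile $(F+\upmu_k\upmu_k^TR^{-1}_{11})^{-1}$ is, by Lemma~\ref{lemblocks} again, the inverse of a smaller tridiagonal matrix with the same perturbation structure, so its bottom row is $\widetilde T^{-1}_{\ro{2j},\co{1:2}}$ in the indexing of the theorem. Assembling these two factors gives $\widetilde T^{-1}_{\ro{2j+1:2j+2},\co{1:2}} = \mu_k\begin{bmatrix}-a_{n-j}\\ \hat a_{n-j}\end{bmatrix}\widetilde T^{-1}_{\ro{2j},\co{1:2}}$, completing the induction.

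The main obstacle, as I see it, is bookkeeping: one must verify carefully that the recursive sequences $a_j$, $\hat a_j$, $b_j$ of Definition~\ref{defaahat} are exactly the quantities produced by iterating the Schur complement, and in particular that the diagonal perturbation $d_{N-2n+2}+\mu_k^2 a_{n-1}$ carried by block $n$ is the one inherited from removing block $n-1$ (note the index shift between $n$ and $n-1$, and the alternating roles of $a$ and $b$ for the ``top'' versus ``bottom'' recursion in Lemma~\ref{lemblocks}). Concretely, one checks that the $(1,1)$ entry $R^{-1}_{11}$ of the just-removed $2\times2$ block, read off from \eqref{eq:I}, equals the numerator/denominator combination defining $a_{n-j}$ and $\hat a_{n-j}$ — this is the identity that makes the column $\begin{bmatrix}-a_{n-j}\\ \hat a_{n-j}\end{bmatrix}$ appear. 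Once that algebraic correspondence is pinned down, the recursion unwinds mechanically; the detailed matching of indices can be cross-referenced with the analogous computation in \cite{cyranka_mucha}, as already done for Lemma~\ref{lemblocks}.
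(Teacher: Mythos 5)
Your proposal takes essentially the same route as the paper's proof: iteratively peel a $2\times 2$ block off the bottom right of the remaining Schur-complement block via Lemma~\ref{lemblock}, read off the outer-product structure of $\inv_{21}=-R^{-1}\upmu_k^T\inv_{11}$ from the single nonzero entry of $\upmu_k^T$, and match the resulting column vector with $\mu_k\begin{bmatrix}-a_{n-j}\\\hat a_{n-j}\end{bmatrix}$ through Definition~\ref{defaahat}, iterating on the $\inv_{11}$ block until the base case $\widetilde I$ is reached. Two small slips to tighten: in Lemma~\ref{lemblock}'s convention ($F\in M_n$, $R\in M_{N-n}$) you want the lemma's $n$ to be the size $2n-2$ of the large top-left block, not $n=2$; and the first column of the intermediate block $R^{-1}=T^{-1}\bigl(d_{N-2i+1},\,d_{N-2i+2}+\mu_k^2 a_{i-1}\bigr)$ comes from a direct $2\times 2$ inversion together with Definition~\ref{defaahat} as $\begin{bmatrix}a_i\\-\hat a_i\end{bmatrix}$ --- not from \eqref{eq:I}/Corollary~\ref{corblocks}, which describe only the doubly-perturbed block $\widetilde I$ at the corner --- with the sign flip produced by the $-\mu_k$ prefactor in $-R^{-1}\upmu_k^T$ rather than by any determinant scaling.
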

\begin{proof}
  We decompose into blocks
  \[
    \widetilde{T}^{-1} = \left[ \begin{matrix} T(d_{N-2n+1} + \mu_k^2b_{\frac{N-2n}{2}},\ d_{N-2n+2},\dots,d_{N-2}) & -\upmu_k \\ \upmu_k^T & T(d_{N-1},d_N) \end{matrix}\right]^{-1} = \begin{bmatrix} \inv_{11}^{(1)} & \inv_{12}^{(1)}\\ \inv_{21}^{(1)} & \inv_{22}^{(1)}\end{bmatrix}.
  \]
  
  We apply Lemma~\ref{lemblock} with $F = T(d_{N-2n+1} + \mu_k^2b_{\frac{N-2n}{2}},\ d_{N-2n+2},\dots,d_{N-2})$, and $R = T(d_{N-1},d_N)$. It follows that
  \begin{multline*}
    \inv_{11}^{(1)} = \left( T(d_{N-2n+1} + \mu_k^2b_{\frac{N-2n}{2}},\ d_{N-2n+2},\dots,d_{N-2}) + \upmu_k\upmu_k^T T^{-1}_{11}(d_{N-1},d_N)\right)^{-1} =\\ T^{-1}(d_{N-2n+1} + \mu_k^2b_{\frac{N-2n}{2}},\ d_{N-2n+2},\dots,d_{N-2} + \mu_k^2a_1).
  \end{multline*}
  (observe that $\upmu_k$ is $(N-2)\times 2$ dimensional matrix -- set $n=N-2$ in \eqref{eq:muk}). It holds that
  \[
    \inv_{21}^{(1)} = - T^{-1}(d_{N-1}, d_N)\upmu_k^T\inv_{11}^{(1)},\qquad (\inv_{21}^{(1)})_{\co{1:2}} = \mu_k\begin{bmatrix} -a_1\\\hat{a}_1\end{bmatrix}(\inv_{11}^{(1)})_{\ro{N-2},\co{1:2}}.
  \]
  In order to compute $(\inv_{11}^{(1)})_{\ro{N-2},\co{1:2}}$ we decompose $\inv_{11}^{(1)}$ into blocks further
  \[
    \inv_{11}^{(1)} = \begin{bmatrix} \inv_{11}^{(2)} & \inv_{12}^{(2)}\\ \inv_{21}^{(2)} & \inv_{22}^{(2)} \end{bmatrix} = \begin{bmatrix} T(d_{N-2n+1} + \mu_k^2b_{\frac{N-2n}{2}},\ d_{N-2n+2},\dots,d_{N-4}) & -\upmu_k \\ \upmu_k^T & T(d_{N-3}, d_{N-2}) + \mu_k^2a_1)\end{bmatrix}^{-1}.
  \]
  It holds that
  \[
    \inv_{21}^{(2)} = -T^{-1}(d_{N-3}, d_{N-2}+\mu_k^2a_1)\upmu_k^T\inv_{11}^{(2)}.
  \]
  Hence
  \[
    (\inv_{11}^{(1)})_{\ro{N-3:N-2},\co{1:2}} = (\inv_{21}^{(2)})_{\ro{1:2},\co{1:2}} = \mu_k\begin{bmatrix}-a_2\\ \hat{a}_2\end{bmatrix}(\inv_{11}^{(2)})_{\ro{N-4},\co{1:2}}.
  \]
  Repeating this procedure, we find
  \[
    (\inv_{11}^{(2)})_{\ro{N-5:N-4},\co{1:2}} = (\inv_{21}^{(3)})_{\ro{1:2},\co{1:2}} = \mu_k\begin{bmatrix}-a_3\\\hat{a}_3\end{bmatrix}(\inv_{11}^{(3)})_{\ro{N-6},\co{1:2}},
  \]
  and
  \[
    (\inv_{11}^{(j)})_{\ro{N-2j-1:N-2j},\co{1:2}} = (\inv_{21}^{(j-1)})_{\ro{1:2},\co{1:2}} = \mu_k\begin{bmatrix}-a_{j+1}\\\hat{a}_{j+1}\end{bmatrix}(\inv_{11}^{(j-1)})_{\ro{N-2j-2},\co{1:2}},
  \]  
  and finally (after several steps of recursion)
  \begin{gather}
    \inv_{21}^{(n-1)} = \mu_k\begin{bmatrix}-a_{n-1}\\\hat{a}_{n-1}\end{bmatrix}\left(\inv_{11}^{(n-1)}\right)_{\ro{2}}\nonumber\\
    \inv_{11}^{(n-1)} = \widetilde{I} = T^{-1}(d_{N-2n+1} + \mu_k^2b_{\frac{N-2n}{2}},\ d_{N-2n+2} + \mu_k^2 a_{n-1}).\label{recurend}
  \end{gather}
We emphasize that the recursion is being finished here as $\inv_{11}^{(n-1)}$ is the $2\times 2$ upper left corner matrix of $T^{-1}$. 
Observe that $T^{-1}_{\ro{N-2j-1:N-2j}, \co{1:2}} = (\inv_{11}^{(j)})_{\ro{N-2j-1:N-2j},\co{1:2}}\text{ for $j=1,\dots,n-1$}$. Therefore we obtained the claim.
\end{proof}
\begin{remark}
By repetitive application of Theorem~\ref{thm:explicit} we obtain the explicit formulas for all values of the lower triangle of the full tridiagonal inverse matrix $T^{-1}$ \eqref{tridiagser}. E.g., by setting $n=\frac{N}{2}$ in Theorem~\ref{thm:explicit} we obtain the formulas for the first and the second columns of $T^{-1}$ and so on.
\end{remark}
\begin{lemma}
\label{lemtksymmetry}
The matrix $T^{-1}$ satisfies the following symmetries
\begin{align*}
T^{-1}_{j+2i,j} = T^{-1}_{j,j+2i},\ 
T^{-1}_{j+2i+1,j} = -T^{-1}_{j,j+2i+1},\\
T^{-1}_{j - 2i,j} = T^{-1}_{j,j - 2i},\ 
T^{-1}_{j - 2i+1,j} = -T^{-1}_{j,j - 2i+1}.
\end{align*}
for all $1\leq j\leq N$ and $i>0$ such that $j+2i+1 \leq N$, and $1\leq j - 2i+1$.
\end{lemma}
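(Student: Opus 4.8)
The plan is to derive all four identities from a single entrywise symmetry of $T^{-1}$, namely
\[
(T^{-1})_{p,q} = (-1)^{p+q}\,(T^{-1})_{q,p}\qquad\text{for all }1\le p,q\le N .
\]
Granting this, the four displayed relations follow by plugging in the appropriate indices: choosing $(p,q)=(j+2i,\,j)$ and using $(-1)^{2j+2i}=1$ gives $T^{-1}_{j+2i,j}=T^{-1}_{j,j+2i}$; choosing $(p,q)=(j+2i+1,\,j)$ and using $(-1)^{2j+2i+1}=-1$ gives $T^{-1}_{j+2i+1,j}=-T^{-1}_{j,j+2i+1}$; and $(p,q)=(j-2i,\,j)$ and $(p,q)=(j-2i+1,\,j)$ give the remaining two in the same way. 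The index restrictions in the statement ($j+2i+1\le N$ and $1\le j-2i+1$) serve only to guarantee that every entry occurring in a given relation lies inside the $N\times N$ matrix, so nothing extra needs checking there.

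To prove the entrywise symmetry I would introduce the diagonal involution $D\bydef\mathrm{diag}((-1)^1,(-1)^2,\dots,(-1)^N)$, so that $D^2=I$, and check directly that conjugation by $D$ turns $T$ into its transpose: the matrix $T=T_k$ from \eqref{tridiagser} has $d_j$ on the diagonal, $-\mu_k$ on the superdiagonal and $\mu_k$ on the subdiagonal, and conjugating by $D$ leaves each diagonal entry fixed (it is multiplied by $(-1)^{2j}=1$), sends the superdiagonal entry $-\mu_k$ at position $(j,j+1)$ to $(-1)^{2j+1}(-\mu_k)=\mu_k$, and sends the subdiagonal entry $\mu_k$ at position $(j+1,j)$ to $(-1)^{2j+1}\mu_k=-\mu_k$; this is exactly the off-diagonal pattern of $T^{\top}$, hence $DTD=T^{\top}$. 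Since $\mu_k\neq 0$, the determinant argument recalled above (see \cite{cyranka_mucha}) shows $T$ is invertible, so $DTD$ and $T^{\top}$ are invertible as well, and inverting the identity $DTD=T^{\top}$ gives
\[
D\,T^{-1}\,D=(DTD)^{-1}=(T^{\top})^{-1}=(T^{-1})^{\top}.
\]
Reading off the $(p,q)$ entry on both sides, $(-1)^{p}(T^{-1})_{p,q}(-1)^{q}=(T^{-1})_{q,p}$, which is the claimed symmetry.

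I expect no genuine obstacle here: the only step demanding any care is the entrywise verification that $DTD=T^{\top}$, after which everything is bookkeeping of signs. As an alternative one could read the same symmetries off the recursive block formulas of Theorem~\ref{thm:explicit} and Corollary~\ref{corblocks}, but the conjugation argument is both shorter and more transparent, and it applies verbatim to the tridiagonal submatrices of $T^{-1}$ appearing in Lemma~\ref{lemblocks}, since those share the same $\pm\mu_k$ off-diagonal structure.
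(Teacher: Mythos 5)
Your proof is correct, and it takes a genuinely cleaner route than the paper's. The paper argues directly from the defining identities $T^{-1}T = TT^{-1} = I$: it writes out corresponding diagonal entries of both products, exploits the skew relations $T_{j,j\pm 1} = -T_{j\pm 1,j}$ to extract $T^{-1}_{j\pm 1,j} = -T^{-1}_{j,j\pm 1}$, and then iterates ("proceeding further analogously") to propagate the pattern outward across the super- and subdiagonals, which amounts to an induction on the offset $i$ that is left somewhat terse. Your approach replaces all of that bookkeeping with the single structural observation $DTD = T^{\top}$ for $D = \mathrm{diag}((-1)^1,\dots,(-1)^N)$: since $D^2 = I$, inverting both sides gives $DT^{-1}D = (T^{-1})^{\top}$, i.e.\ $(T^{-1})_{p,q} = (-1)^{p+q}(T^{-1})_{q,p}$, and all four stated identities are immediate instances of this one relation. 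The conjugation argument buys you a closed-form entrywise symmetry in one line, with no induction and no case splitting, and as you note it transfers verbatim to the tridiagonal blocks appearing in Lemma~\ref{lemblocks} and Theorem~\ref{thm:explicit} since they share the $\pm\mu_k$ off-diagonal structure. The paper's elementwise approach has the minor advantage that it stays entirely within the ``expand the inverse identity'' toolkit already in use throughout Section~3, but your version is shorter, more transparent, and isolates the exact algebraic source of the sign pattern.
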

\begin{proof}
Without loss of generality, let $1\leq j\leq N$. It must hold that
\[
T^{-1}_{j-1, j}T_{j, j-1} + T^{-1}_{j,j}T_{j,j} + T^{-1}_{j+1, j}T_{j,j+1} = 1,\qquad T^{-1}_{j, j-1}T_{j-1, j} + T^{-1}_{j,j}T_{j,j} + T^{-1}_{j, j+1}T_{j+1,j} = 1,
\]
as it holds that $T_{j, j-1} = -T_{j-1, j}$ and $T_{j, j+1} = -T_{j+1, j}$ we obtain
\begin{equation}
\label{eqfirstcond}
T^{-1}_{j-1, j} = -T^{-1}_{j, j-1}\text{ \ and \ }T^{-1}_{j+1, j} = - T^{-1}_{j, j+1}.
\end{equation}
From the same argument applied to other column, we get
\[
T^{-1}_{j, j}T_{j+1, j} + T^{-1}_{j+1,j}T_{j+1,j+1} + T^{-1}_{j+2,j}T_{j+1,j+2} = 0,\qquad T^{-1}_{j, j}T_{j, j+1} + T^{-1}_{j,j+1}T_{j+1,j+1} + T^{-1}_{j, j+2}T_{j+2,j+1} = 0.
\]
From \eqref{eqfirstcond} and $T_{j+1,j+2} = -T_{j+2,j+1}$ it follows that $T^{-1}_{j+2,j} = T^{-1}_{j, j+2}$.

Now, it is clearly seen that proceeding further analogously, and we obtain the claim.
\end{proof}
\begin{remark}
By iterative application of Theorem~\ref{thm:explicit} we can obtain the explicit formulas for all values of the lower triangle of the full tridiagonal inverse matrix $T^{-1}$ \eqref{tridiagser}. Then, the symmetries from Lemma~\ref{lemtksymmetry} provide the explicit formulas for all values of the full tridiagonal inverse matrix $T^{-1}$.
\end{remark}
\begin{lemma}
\label{lem:tkinvlowertriang}
It holds that 
\begin{equation}
\label{eq:norms}
\left\|T^{-1}_{\ro{N-2n+1:N},\co{N-2n+c}}\right\|_{\ellmat} = \left\{\begin{array}{l}
\left\|\widetilde{I}_{\co{c}}\right\|_{\ellmat},\text{ for }n=1,\\
\left\|\widetilde{I}_{\co{c}}\right\|_{\ellmat} + \sum_{i=1}^{n-1}\mu_k^{ i } \left(a_{n-i} + \hat{a}_{n-i}\right)\prod_{p = 1}^{i-1}{\hat{a}_{n-p}|\widetilde{I}_{2c}}|\text{, for }1 < n \leq\frac{N}{2},
\end{array}
\right.
\end{equation}
where $c=1$ or $2$, $\widetilde{I}$ is the diagonal block \eqref{eq:I} and $\|\widetilde{I}_{\co{c}}\|_{\ellmat}=|\widetilde{I}_{1c}|+|\widetilde{I}_{2c}|$. In the sum above for the case $i=1$ we put $\prod_{p = 1}^{i-1}{\hat{a}_{n-p}}=1$.
\end{lemma}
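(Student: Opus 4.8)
The plan is to compute the column norm $\left\|T^{-1}_{\ro{N-2n+1:N},\co{N-2n+c}}\right\|_{\ellmat}$ directly from the explicit recursive formulas of Theorem~\ref{thm:explicit}. The key observation is that the column $T^{-1}_{\co{N-2n+c}}$ restricted to rows $N-2n+1,\dots,N$ is exactly the first (if $c=1$) or second (if $c=2$) column of the $2n\times 2n$ bottom-right submatrix $\widetilde{T}^{-1}$ from Theorem~\ref{thm:explicit}, because of the way the recursion anchors the $2\times 2$ diagonal block $\widetilde I$ at the top-left of $\widetilde{T}^{-1}$. So I would first set up this identification, then stack the block rows: row-pair $2j+1{:}2j+2$ of the column equals $\mu_k\left[\begin{smallmatrix}-a_{n-j}\\\hat a_{n-j}\end{smallmatrix}\right]$ times the scalar $\widetilde{T}^{-1}_{\ro{2j},\co{c}}$, and the top row-pair $1{:}2$ is just $\widetilde I_{\co{c}}$.

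Second, I would unwind the scalar recursion for $\widetilde{T}^{-1}_{\ro{2j},\co c}$. From Theorem~\ref{thm:explicit}, $\widetilde{T}^{-1}_{\ro{2j+1:2j+2},\co{1:2}} = \mu_k\left[\begin{smallmatrix}-a_{n-j}\\\hat a_{n-j}\end{smallmatrix}\right]\widetilde{T}^{-1}_{\ro{2j},\co{1:2}}$, so the \emph{even} row $2j+2$ picks up the factor $\hat a_{n-j}$ relative to the even row $2j$; iterating, $\widetilde{T}^{-1}_{\ro{2j},\co c} = \left(\prod_{p=1}^{j-1}\hat a_{n-p}\right)\widetilde{T}^{-1}_{\ro{2},\co c} = \left(\prod_{p=1}^{j-1}\hat a_{n-p}\right)\widetilde I_{2c}$, with the empty product equal to $1$ when $j=1$. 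Substituting back, the row-pair $2j+1{:}2j+2$ of the column has entries $-\mu_k a_{n-j}\left(\prod_{p=1}^{j-1}\hat a_{n-p}\right)\widetilde I_{2c}$ and $\mu_k\hat a_{n-j}\left(\prod_{p=1}^{j-1}\hat a_{n-p}\right)\widetilde I_{2c}$, whose combined absolute value is $\mu_k\left(a_{n-j}+\hat a_{n-j}\right)\left(\prod_{p=1}^{j-1}\hat a_{n-p}\right)|\widetilde I_{2c}|$ (here I use that all of $a_\ell,\hat a_\ell,b_\ell>0$ by Lemma~\ref{lema}, so no sign cancellation occurs when summing absolute values).

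Third, I would sum over all row-pairs: the $j=0$ block contributes $\|\widetilde I_{\co c}\|_{\ellmat}=|\widetilde I_{1c}|+|\widetilde I_{2c}|$, and the blocks $j=1,\dots,n-1$ contribute $\sum_{j=1}^{n-1}\mu_k\left(a_{n-j}+\hat a_{n-j}\right)\left(\prod_{p=1}^{j-1}\hat a_{n-p}\right)|\widetilde I_{2c}|$. Reindexing the sum with $i=j$ gives exactly the right-hand side of \eqref{eq:norms}; the $n=1$ case degenerates to just the $\widetilde I$ term. A small bookkeeping point I would be careful about: the $\ellmat$-norm here sums \emph{all} absolute values (not the weighted $\ell^1$ of \eqref{eq:vell1}), so each of the $2n$ entries is counted once with weight one, which is what the computation above produces.

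The main obstacle is not any single hard estimate but rather getting the indexing of the recursion exactly right — matching the running index $j$ in Theorem~\ref{thm:explicit} (which counts blocks downward from the top of $\widetilde{T}^{-1}$) with the subscripts $n-j$ on the sequences $a,\hat a$, and checking that the product $\prod_{p=1}^{i-1}\hat a_{n-p}$ telescopes correctly out of the even-row recursion. I would also double-check the base of the product convention ($i=1\Rightarrow$ empty product $=1$) against the statement, and confirm that the exponent $\mu_k^{\,i}$ in \eqref{eq:norms} matches: one factor of $\mu_k$ from the current block's $\left[\begin{smallmatrix}-a_{n-i}\\\hat a_{n-i}\end{smallmatrix}\right]$ prefactor, times $i-1$ implicit factors already absorbed — actually all $i$ factors of $\mu_k$ come from the $i$ applications of the prefactor $\mu_k\left[\begin{smallmatrix}-a\\\hat a\end{smallmatrix}\right]$ needed to descend $i$ block-levels, so the power is $\mu_k^{\,i}$ as claimed. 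Once the indices are pinned down, the proof is a direct substitution.
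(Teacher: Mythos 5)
Your proposal is correct and follows essentially the same route as the paper's proof: identify $T^{-1}_{\ro{N-2n+1:N},\co{N-2n+c}}$ with column $c$ of the $2n\times 2n$ bottom-right submatrix $\widetilde T^{-1}$ via Lemma~\ref{lemblocks}, unwind the block recursion of Theorem~\ref{thm:explicit}, and sum absolute values row-pair by row-pair. One slip worth cleaning up in a final write-up: the intermediate line $\widetilde{T}^{-1}_{\ro{2j},\co c} = \bigl(\prod_{p=1}^{j-1}\hat a_{n-p}\bigr)\widetilde I_{2c}$ drops the accumulated $\mu_k^{j-1}$ (the even-row recursion is $\widetilde{T}^{-1}_{\ro{2j+2},\co c} = \mu_k\hat a_{n-j}\,\widetilde{T}^{-1}_{\ro{2j},\co c}$, so the correct iterate is $\mu_k^{j-1}\bigl(\prod_{p=1}^{j-1}\hat a_{n-p}\bigr)\widetilde I_{2c}$), though you catch this yourself in the bookkeeping paragraph and land on the right per-block contribution $\mu_k^{i}\bigl(a_{n-i}+\hat a_{n-i}\bigr)\prod_{p=1}^{i-1}\hat a_{n-p}\,|\widetilde I_{2c}|$.
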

\begin{proof}
Consider $\widetilde{T}^{-1}$ -- the bottom right corner sub-matrix of $T^{-1}$ that spans the following diagonal elements of $T$: $d_{N-2n+1}, \dots, d_N$. By Lemma~\ref{lemblocks} $\widetilde{T}^{-1}$ is equal to
\[
\widetilde{T}^{-1} = T^{-1}(d_{N-2n+1} + \mu_k^2b_{\frac{N-2n}{2}},\ d_{N-2n+2},\ \dots,\ d_{N}).
\]
$\widetilde{T}^{-1}$ embeds in $T^{-1}$ as
\[
\widetilde{T}^{-1}=T^{-1}_{\ro{N-2n+1:N},\co{N-2n+1:N}},
\]
it is the $2n\times 2n$ dimensional bottom right corner submatix of $T^{-1}$. Knowing that $\ro{1:2}$ and $\co{1:2}$ of $\widetilde{T}^{-1}$ embeds in $\ro{N-2n+1:N-2n}$ and $\co{N-2n+1:N-2n}$ of $T^{-1}$ respectively, the recurrence relation shown in Theorem~\ref{thm:explicit} takes the form
\[
T^{-1}_{\ro{N-2n+2i+1:N-2n+2i+2},\co{N-2n+1:N-2n+2}} = \mu_k\begin{bmatrix}-a_{n-i}\\\hat{a}_{n-i}\end{bmatrix}T^{-1}_{\ro{N-2n+2i},\co{N-2n+1:N-2n+2}},
\]
for $i=1,\dots,n-1$. After unveiling the recurrence relation we get for $c=1$ or $c=2$ (denoting the odd/even column respectively)
\begin{equation}
\label{eq:terms}
\left\|T^{-1}_{\ro{N-2n+2i+1:N-2n+2i+2},\co{N-2n+c}}\right\|_{\ellmat} = \left\{ \begin{array}{ll}
\mu_k^{ i } \left(a_{n-i} + \hat{a}_{n-i}\right)\prod_{p=1}^{i-1}{\hat{a}_{n-p}|\widetilde{I}_{2c}}|&\text{ for }1 < i < n,\\
\mu_k \left(a_{n-1} + \hat{a}_{n-1}\right)|\widetilde{I}_{2c}|&\text{ for }i=1,\\
\left\|\widetilde{I}_{\co{c}}\right\|_{\ellmat}&\text{ for }i=0,
\end{array}\right.
\end{equation}
Summing up all elements in rows $N-2n+1:N$ we have
\begin{equation}
\label{eq:norms}
\left\|T^{-1}_{\ro{N-2n+1:N},\co{N-2n+c}}\right\|_{\ellmat} = 
\left\|\widetilde{I}_{\co{c}}\right\|_{\ellmat} + \sum_{i=1}^{n-1}\mu_k^{ i } \left(a_{n-i} + \hat{a}_{n-i}\right)\prod_{p = 1}^{i-1}{\hat{a}_{n-p}|\widetilde{I}_{2c}}|,
\end{equation}
where for $i = 1$ we put $\prod_{p = 1}^{i-1}{\hat{a}_{n-p}=1}$ and the first term in the sum \eqref{eq:norms} is equal to $\mu_k \left(a_{n-1} + \hat{a}_{n-1}\right)|\widetilde{I}_{2c}|$ according to \eqref{eq:terms}.
\end{proof}
We will often need a formula covering $\|T_k^{-1}\|$ whole columns, not only the lower triangle as in Lemma~\ref{lem:tkinvlowertriang}. Combining Lemma~\ref{lem:tkinvlowertriang} with Lemma~\ref{lem:tkinvuppertriang2} proven in the appendix we obtain
\begin{lemma}
The following bound holds
\[
\left\|\left(T_k^{-1}\right)_{\co{N-2n+c}}\right\|_{\ellmat} \leq 
\|\widetilde{I}_{\co{c}}\|_{1} + 
\sum_{i=1}^{n-1}\mu_k^{ i } \left(a_{n-i} + \hat{a}_{n-i}\right)\prod_{p = 1}^{i-1}{\hat{a}_{n-p}|\widetilde{I}_{2c}}|+
\sum_{i=1}^{\frac{N}{2}-n}\mu_k^{ i } 
\left(a_{n+i} + \hat{a}_{n+i}\right)\prod_{p = 1}^{i-1}{\hat{a}_{n+p}|\widetilde{I}_{2c}}|
\]
for all $n=1,\dots,\frac{N}{2}$ and $c=1\text{ or }2$, where $\widetilde{I}$ is the diagonal block \eqref{eq:I} and $\|\widetilde{I}_{\co{c}}\|_{\ellmat}=|\widetilde{I}_{1c}|+|\widetilde{I}_{2c}|$. In case $n=1$ we put the first sum is zero, in case $n=\frac{N}{2}$ the second sum is zero.
\end{lemma}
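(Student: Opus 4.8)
The plan is to read off both summands of the claimed bound from the two lemmas being combined. Fix a column index $N-2n+c$ with $c\in\{1,2\}$, and recall that the diagonal $2\times2$ block of $T^{-1}$ through this column is $\widetilde{I}$, the $n$-th block of Corollary~\ref{corblocks}, occupying rows and columns $N-2n+1,N-2n+2$. Partition the row indices of $T^{-1}$ into $\{N-2n+1,\dots,N\}$ (the block $\widetilde{I}$ together with everything below it) and $\{1,\dots,N-2n\}$ (everything strictly above $\widetilde{I}$). Since $\|\cdot\|_{\ellmat}$ of a vector is additive over any partition of the coordinates,
\[
\|(T_k^{-1})_{\co{N-2n+c}}\|_{\ellmat}=\|T^{-1}_{\ro{N-2n+1:N},\co{N-2n+c}}\|_{\ellmat}+\|T^{-1}_{\ro{1:N-2n},\co{N-2n+c}}\|_{\ellmat}.
\]
The first summand is evaluated \emph{exactly} by Lemma~\ref{lem:tkinvlowertriang}, and equals $\|\widetilde{I}_{\co{c}}\|_{\ellmat}+\sum_{i=1}^{n-1}\mu_k^{i}(a_{n-i}+\hat a_{n-i})\prod_{p=1}^{i-1}\hat a_{n-p}|\widetilde{I}_{2c}|$ with the conventions that an empty sum is $0$ and an empty product is $1$; this is precisely the first two terms of the stated bound. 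Hence everything reduces to bounding the second summand, the mass of column $N-2n+c$ lying strictly above the block $\widetilde{I}$.

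That bound is the content of Lemma~\ref{lem:tkinvuppertriang2}, proven in the appendix, whose proof mirrors the argument of Lemma~\ref{lem:tkinvlowertriang} but works toward the top-left corner of $T^{-1}$ rather than its bottom-right corner. Concretely: by the checkerboard symmetry of Lemma~\ref{lemtksymmetry} the entries $T^{-1}_{i,N-2n+c}$ with $i\le N-2n$ agree, up to sign, with the left-of-diagonal entries of row $N-2n+c$; one then peels off, by a single Schur step (Lemma~\ref{lemblock}, exactly as in Lemma~\ref{lemblocks}), the top-left principal submatrix of $T^{-1}$ whose bottom-right $2\times2$ block is $\widetilde{I}$, which replaces a diagonal entry by a shifted one governed by the recursion of Definition~\ref{defaahat}; and one iterates the recursion of Theorem~\ref{thm:explicit} upward, stripping one $2\times2$ block at each stage and picking up, exactly as in the computation that produced \eqref{eq:terms}, the factors $\mu_k^{i}(a_{n+i}+\hat a_{n+i})\prod_{p=1}^{i-1}\hat a_{n+p}$ for $i=1,\dots,\frac{N}{2}-n$, multiplying an entry of $\widetilde{I}$ that is finally dominated by $|\widetilde{I}_{2c}|$. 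This yields
\[
\|T^{-1}_{\ro{1:N-2n},\co{N-2n+c}}\|_{\ellmat}\le\sum_{i=1}^{\frac{N}{2}-n}\mu_k^{i}(a_{n+i}+\hat a_{n+i})\prod_{p=1}^{i-1}\hat a_{n+p}|\widetilde{I}_{2c}|,
\]
the third term of the claim. Adding the two contributions proves the lemma; the boundary cases $n=1$ (first sum empty, $\widetilde{I}$ the bottom block of $T^{-1}$) and $n=\frac{N}{2}$ (second sum empty, $\widetilde{I}$ the top block, and Lemma~\ref{lem:tkinvlowertriang} already delivers the whole column with equality) are exactly the two situations in which $\widetilde{I}$ abuts an edge of $T^{-1}$.

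The genuinely new input is Lemma~\ref{lem:tkinvuppertriang2}; once it is granted the present statement is a one-line bookkeeping step. I expect the main obstacle to be precisely that upward recursion: one must mirror the blockwise induction of Theorem~\ref{thm:explicit} with the roles of the first and last diagonal blocks interchanged, check that the coefficients emerge as $a_{n+i},\hat a_{n+i}$ over exactly the range $i=1,\dots,\frac{N}{2}-n$, and — this is where the inequality enters, in contrast with the equality of Lemma~\ref{lem:tkinvlowertriang} — verify that the seed of the recursion (an entry of the top row of $\widetilde{I}$ together with a factor coming from the last column of the inverse of the peeled submatrix) is controlled by $|\widetilde{I}_{2c}|$. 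The index arithmetic is delicate but conceptually routine, and the bounds of Lemma~\ref{lema} together with the monotonicity of Lemma~\ref{lemmono} are the tools that make the final domination go through.
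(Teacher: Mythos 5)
Your proof is correct and follows the paper's own route exactly: the paper proves this lemma in one line by partitioning the column at the diagonal block, reading off the lower part (rows $N-2n+1$ through $N$) from the equality of Lemma~\ref{lem:tkinvlowertriang} and the upper part (rows $1$ through $N-2n$) from the inequality of Lemma~\ref{lem:tkinvuppertriang2}, then adding. Your sketch of how Lemma~\ref{lem:tkinvuppertriang2} is obtained is also in the spirit of the appendix, though the paper actually proceeds by formally ``reversing'' $T_k$ into $\overline{T}_k$, deriving analogous recursive sequences $\overline{a}_j,\hat{\overline{a}}_j$, and invoking Lemma~\ref{lem:abar} ($\overline{a}_j < a_{N/2-j+1}$) to convert back to the original $a$'s and $\hat a$'s — but since you treat that lemma as given, this does not affect the correctness of your argument for the present statement.
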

We proceed with proving the bound for $\|T^{-1}\|_1$ essential for the ultimate result of this section.  
\begin{lemma}
\label{lem:tkinvsplit}
It holds that
\[
\left\|T^{-1}\right\|_{\ellmat} < \left\|T_{\ro{1:2\lfloor\mu_k\rfloor}}^{-1}\right\|_{\ellmat} + \frac{1}{2\mu_k}
\]
for all $N>2\mu_k$, the bound is uniform with respect to (even) approximation dimension $N$.
\end{lemma}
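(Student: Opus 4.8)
The plan is to localize the $\|\cdot\|_{\ellmat}$ operator norm of $T^{-1}$ (the maximal absolute row sum) to its first $2\lfloor\mu_k\rfloor$ rows, at the cost of a uniformly small additive error, by peeling off the diagonally dominant tail of $T$ with a single Schur--complement block split. Write $T=T_k$ in $2\times2$ block form as in Lemma~\ref{lemblock}, with leading principal block $F\in M_{2\lfloor\mu_k\rfloor}(\mR)$ and trailing block $R=T(d_{2\lfloor\mu_k\rfloor+1},\dots,d_N)\in M_{N-2\lfloor\mu_k\rfloor}(\mR)$; this split is legitimate because $N$ and $2\lfloor\mu_k\rfloor$ are even and $N-2\lfloor\mu_k\rfloor\ge2$ (as $N>2\mu_k$). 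Lemma~\ref{lemblock} then expresses $T^{-1}$ through four blocks $\inv_{11},\inv_{12},\inv_{21},\inv_{22}$ with $\inv_{21}=-R^{-1}\upmu_k^T\inv_{11}$, where: $\inv_{11}$ is exactly the leading principal $2\lfloor\mu_k\rfloor\times2\lfloor\mu_k\rfloor$ submatrix of $T^{-1}$; $\inv_{22}$ is the inverse of $R$ with a strictly positive number added to its $(1,1)$ entry (namely $\mu_k^2(F^{-1})_{2\lfloor\mu_k\rfloor,2\lfloor\mu_k\rfloor}$, which lies in $(0,\mu_k)$ by Lemma~\ref{lemblocks} and Lemma~\ref{lema}); and, because $\upmu_k^T$ has only one nonzero entry $\mu_k$, the block $\inv_{21}$ is rank one, $(\inv_{21})_{\ell,j}=-\mu_k\,(R^{-1})_{\ell,1}\,(\inv_{11})_{2\lfloor\mu_k\rfloor,\,j}$.

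Next I would establish that $R$ --- and hence the matrix underlying $\inv_{22}$, which differs from $R$ only by enlarging its $(1,1)$ entry --- is strictly diagonally dominant with \emph{every} row slack strictly larger than $2\mu_k$. Indeed, the smallest slack is the interior one $d_{2\lfloor\mu_k\rfloor+2}-2\mu_k=4\lfloor\mu_k\rfloor+4-2\mu_k$, which exceeds $2\mu_k$ precisely because $\lfloor\mu_k\rfloor+1>\mu_k$, while the single-neighbour boundary rows have even larger slack in the relevant (large-$\mu_k$) range. The standard bound $\|A^{-1}\|_{\ellmat}\le 1/\min_i\mathrm{slack}_i(A)$ for strictly diagonally dominant $A$ then gives at once both $\|\inv_{22}\|_{\ellmat}<\frac{1}{2\mu_k}$ and $\mu_k\,|(R^{-1})_{\ell,1}|\le\mu_k\,\|R^{-1}\|_{\ellmat}<\frac12$ for every $\ell$.

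To finish, assemble the row sums of $T^{-1}$. For $1\le i\le 2\lfloor\mu_k\rfloor$ the $i$-th row is one of the first $2\lfloor\mu_k\rfloor$ rows, so $\sum_j|T^{-1}_{ij}|\le\|T_{\ro{1:2\lfloor\mu_k\rfloor}}^{-1}\|_{\ellmat}$. For $i=2\lfloor\mu_k\rfloor+\ell$ the $i$-th row of $T^{-1}$ is the concatenation of $(\inv_{21})_{\ro{\ell}}$ with $(\inv_{22})_{\ro{\ell}}$, whence, using the rank-one form of $\inv_{21}$ and the two bounds just obtained,
\[
\sum_j|T^{-1}_{ij}|=\mu_k\,|(R^{-1})_{\ell,1}|\,\|(\inv_{11})_{\ro{2\lfloor\mu_k\rfloor}}\|_{\ellmat}+\|(\inv_{22})_{\ro{\ell}}\|_{\ellmat}<\frac12\,\|T_{\ro{1:2\lfloor\mu_k\rfloor}}^{-1}\|_{\ellmat}+\frac{1}{2\mu_k},
\]
where $\|(\inv_{11})_{\ro{2\lfloor\mu_k\rfloor}}\|_{\ellmat}=\sum_{j=1}^{2\lfloor\mu_k\rfloor}|T^{-1}_{2\lfloor\mu_k\rfloor,j}|\le\|T_{\ro{1:2\lfloor\mu_k\rfloor}}^{-1}\|_{\ellmat}$ since $\inv_{11}$ is the leading block of $T^{-1}$. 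Taking the maximum over all $N$ rows yields the stated strict inequality. It is uniform in $N$: the additive $\frac{1}{2\mu_k}$ and the factor $\frac12$ are $N$-free, and $\|T_{\ro{1:2\lfloor\mu_k\rfloor}}^{-1}\|_{\ellmat}$ depends on $N$ only through the rank-one correction entering $\inv_{11},\inv_{12}$ --- a perturbation of a fixed $2\lfloor\mu_k\rfloor$-dimensional tridiagonal matrix by $\mu_k^2 a_n$ in its last diagonal entry, with $a_n\in(0,\sqrt2/\mu_k)$ by Lemma~\ref{lema}, hence confined to a fixed compact set.

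The only genuinely delicate step is the diagonal-dominance bookkeeping of the second paragraph: showing that, after removing exactly the first $2\lfloor\mu_k\rfloor$ rows, every slack of the peeled tail $R$ strictly exceeds $2\mu_k$ (this is what pins the additive error at $\frac{1}{2\mu_k}$ rather than something larger), together with the quantitative estimate $\mu_k|(R^{-1})_{\ell,1}|<1$ that keeps the rank-one coupling term strictly below $\|T_{\ro{1:2\lfloor\mu_k\rfloor}}^{-1}\|_{\ellmat}$. Both reduce to elementary inequalities for $\lfloor\mu_k\rfloor$; the only care needed is the small/large-$\mu_k$ transition, which is harmless since $\mu_k\le1000$ is already covered by Corollary~\ref{cor:explicit_bound_norm_inv_small_mu}. (An alternative, hewing closer to the preceding development, would bound the $\ellmat$-norm of each column $N-2n+c>2\lfloor\mu_k\rfloor$ directly from the explicit column bound established just above together with the decay estimates of Lemmas~\ref{lema}--\ref{lema2}, but the bookkeeping of the two resulting geometric series is considerably heavier.)
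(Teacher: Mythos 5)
Your route --- a single Schur--complement split at column $2\lfloor\mu_k\rfloor$ plus diagonal-dominance (Varah-type) bounds on the trailing block --- is genuinely different from the paper's, and it is an attractive, elementary alternative to the explicit recursive formulas of Lemmas~\ref{lemblocks}--\ref{lem:tkinvlowertriang}. Your computations of the slacks of $R$ and $\inv_{22}$ and the rank-one structure of $\inv_{21}$ are all correct. However, the proof hinges on reading $\|\cdot\|_{\ellmat}$ as a maximum absolute \emph{row} sum. While the paper's stated formula looks like that, the surrounding usage unambiguously means the maximum absolute \emph{column} sum (the standard matrix $1$-norm): in Theorem~\ref{thm:trivialTkinvbd} (and again in Appendix~\ref{appxspacereg}) the quantity $\|T^{-1}_{\ro{1:2\lfloor\mu_k\rfloor}}\|_{\ellmat}$ is bounded by $2\lfloor\mu_k\rfloor\cdot\sqrt2/\mu_k$ from the entry-wise bound $|T^{-1}_{ij}|\le\sqrt2/\mu_k$, i.e.\ by summing at most $2\lfloor\mu_k\rfloor$ entries per column; the row norm of that $2\lfloor\mu_k\rfloor\times N$ block is only bounded by $N\sqrt2/\mu_k$, which is not uniform in $N$.

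With the column-sum interpretation, your argument is fine for columns $l>2\lfloor\mu_k\rfloor$ (using the symmetry of Lemma~\ref{lemtksymmetry} to turn your row-$l$ bound into a column-$l$ bound, and noting $\sum_{j\le 2\lfloor\mu_k\rfloor}|T^{-1}_{2\lfloor\mu_k\rfloor,j}|=P_{2\lfloor\mu_k\rfloor}\le\|T^{-1}_{\ro{1:2\lfloor\mu_k\rfloor}}\|_{\ellmat}$ where $P_l\bydef\sum_{i\le2\lfloor\mu_k\rfloor}|T^{-1}_{il}|$). But the step ``for $1\le i\le 2\lfloor\mu_k\rfloor$ the $i$-th row is one of the first $2\lfloor\mu_k\rfloor$ rows, so $\sum_j|T^{-1}_{ij}|\le\|T^{-1}_{\ro{1:2\lfloor\mu_k\rfloor}}\|_{\ellmat}$'' is exactly where the gap lies: $\|T^{-1}_{\ro{1:2\lfloor\mu_k\rfloor}}\|_{\ellmat}=\max_l P_l$ is a maximum of \emph{partial} column sums, whereas the full column-$l$ sum for $l\le 2\lfloor\mu_k\rfloor$ carries the tail $\sum_{i>2\lfloor\mu_k\rfloor}|T^{-1}_{il}|$, which your argument never bounds for these columns. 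Running your rank-one computation on that tail yields $\sum_{i>2\lfloor\mu_k\rfloor}|T^{-1}_{il}|=\mu_k\,|T^{-1}_{2\lfloor\mu_k\rfloor,l}|\cdot\|(R^{-1})_{\co{1}}\|_{\ellmat}<\mu_k\cdot\frac{\sqrt2}{\mu_k}\cdot\frac{1}{2\mu_k}=\frac{1}{\sqrt2\,\mu_k}$, which overshoots the target $\frac{1}{2\mu_k}$ by a factor $\sqrt2$. This is precisely the part where the paper's proof spends most of its effort, establishing the tail bound for \emph{every} column (not only $l>2\lfloor\mu_k\rfloor$) via the sharper $\frac14,\frac{1}{17}$-type constants of Lemma~\ref{lema2}, which the diagonal-dominance shortcut cannot reproduce inside the leading block. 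Your approach would close cleanly if you can sharpen $|T^{-1}_{2\lfloor\mu_k\rfloor,l}|<\frac{1}{\mu_k}$ for $l<2\lfloor\mu_k\rfloor$; without that, the $l\le2\lfloor\mu_k\rfloor$ case is missing.
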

\begin{proof}
We start by showing
\begin{equation}
\label{eq:tkinvtail}
\left\| T^{-1}_{\substack{\ro{2\lfloor\mu_k\rfloor+1\colon N}\\\co{l}}}\right\|_{\ellmat} <\frac{1}{2\mu_k}
\end{equation}
for any $1 \leq l \leq 2\mu_k$, where $n$ is such that $N-2n+1 = l$ if $l$ is odd, and $N-2n+2 = l$ if $l$ is even.

We use the equality derived in Lemma~\ref{lem:tkinvlowertriang}, i.e.,
\begin{equation}
\label{eq:tkinvlowertriang2}
\left\|T^{-1}_{\ro{N-2n+1:N},\co{N-2n+c}}\right\|_{\ellmat} = 
\left\|\widetilde{I}_{\co{c}}\right\|_{\ellmat} + \sum_{i=1}^{n-1}\mu_k^{ i } \left(a_{n-i} + \hat{a}_{n-i}\right)\prod_{p = 1}^{i-1}{\hat{a}_{n-p}|\widetilde{I}_{2c}}|,
\end{equation}
where for the case $n=1$ the second term in the sum is not present. Recall $\widetilde{I}$ is the corresponding diagonal block of $T^{-1}$ \eqref{eq:I} i.e.,
\[
\widetilde{I} =  \begin{bmatrix}d_{N-2n+1}+\mu_k^2b_{\frac{N-2n}{2}} & \mu_k \\ -\mu_k & d_{N-2n+2} +\mu_k^2a_{n-1}\end{bmatrix}^{-1},
\]
where $D=d_{N-2n+1}d_{N-2n+2} +\mu_k^2a_{n-1} d_{N-2n+1}+d_{N-2n+2}\mu_k^2b_{(N-2n)/2} +\mu_k^4a_{n-1} b_{(N-2n)/2}+\mu_k^2$ is the determinant. We have the following upper bounds for $|\widetilde{I}_{21}|$, $|\widetilde{I}_{22}|$ following from Lemma~\ref{lema}
\begin{gather}
\label{eq:Iinvboundsinline}
    \left|\widetilde{I}_{21}\right| = \frac{\mu_k}{D}  < \hat{a}_{n} < \frac{1}{\mu_k},\quad\left|\widetilde{I}_{22}\right| = \frac{d_{N-2n+1} +\mu_k^2b_{(N-2n)/2}}{D}< b_{(N-2n+2)/2} < \frac{1}{\mu_k}.
\end{gather}
%
%
Let us take $n$ such that (it is possible as $N$ is even and $N>2\mu_k$)
\begin{equation}
\label{eq:cond}
N - 2n + 2= 2\lfloor\mu_k\rfloor,
\end{equation}
then the assumption of Lemma~\ref{lema2}, i.e. $ n - i \leq \frac{N}{2}-\lfloor\mu_k\rfloor + 1$ is satisfied for all $i=1,\dots,n$ with $p=1$ and hence it holds that
\[
    \hat{a}_{n - i } + a_{n - i } \leq \frac{1}{4\mu_k} + \frac{1}{16\mu_k} = \frac{5}{16\mu_k}\text{ and for $i>1$ } \prod_{p=1}^{i-1}{\hat{a}_{n-p}}\leq \frac{1}{17^{i-1}\mu_k^{i-1}},
\]
we bound all terms appearing in \eqref{eq:norms} as follows
\begin{subequations}
\label{eq:aahatbounds}
\begin{align}
&\mu_k \left(a_{n-1} + \hat{a}_{n-1}\right)|\widetilde{I}_{2c}| < \frac{5}{16\mu_k}< \frac{6}{17\mu_k},\\
&\mu_k^{ i } \left(a_{n-i} + \hat{a}_{n-i}\right) \prod_{p=1}^{i-1}{\hat{a}_{n-p}|\widetilde{I}_{2c}}| < 
\mu_k^i\frac{5}{16\mu_k}\frac{1}{17^{i-1}\mu_k^{i-1}}\frac{1}{\mu_k} < \frac{6}{17^i\mu_k}\text{, for }i>1.
\end{align}
\end{subequations}
and the sum in \eqref{eq:norms} is bounded by
\[
\sum_{i=1}^{n-1}\mu_k^{ i } \left(a_{n-i} + \hat{a}_{n-i}\right)\prod_{p = 1}^{i-1}{\hat{a}_{n-p}|\widetilde{I}_{2c}}|<\frac{6}{\mu_k}\left(\sum_{i=1}^{n-1}{\frac{1}{17^i}}\right) < \frac{6}{16\mu_k}.
\]

Next, to show the bound $\left\| T^{-1}_{\substack{\ro{2\lfloor\mu_k\rfloor+1\colon N}\\\co{l}}}\right\|_{\ellmat} <\frac{1}{2\mu_k}$ for  $l> 2\lfloor\mu_k\rfloor+1$ we proceed as follows. Let $n$ be such that $N-2n+1 = l$ if $l$ is odd, and $N-2n+2=l$ if $l$ is even. As assumptions of Lemma~\ref{lema2} are satisfied ($d_{N-2n+1},\ d_{N-2n+2} \geq 2\mu_k$) with $p=1$, we have that
$\left|\widetilde{I}_{11}\right|=\frac{d_{N-2n+2} +\mu_k^2a_{n-1}}{D} < a_{n} < \frac{1}{4\mu_k},\  \left|\widetilde{I}_{22}\right| = \frac{d_{N-2n+1} +\mu_k^2b_{(N-2n)/2}}{D}< b_{(N-2n+2)/2} < \frac{1}{4\mu_k}$,\ $\left|\widetilde{I}_{12}\right|,\,\left|\widetilde{I}_{21}\right| = \frac{\mu_k}{D} < \frac{1}{17\mu_k}$. 

We have that \eqref{eq:tkinvlowertriang2} is bounded by (using the bounds \eqref{eq:aahatbounds} and the explicit bounds for $\left|\widetilde{I}_{21}\right|$ and $\left|\widetilde{I}_{22}\right|$ above)
\[
\left\|T^{-1}_{\ro{N-2n+1:N},\co{N-2n+c}}\right\|_{\ellmat} \leq
\frac{1}{4\mu_k} + \frac{1}{17\mu_k}+\frac{6}{4\mu_k}\left( \sum_{i=1}^{n-1}{\frac{1}{17^i}}\right) < \frac{1}{\mu_k}\left(\frac{1}{4}+\frac{1}{17}+\frac{3}{32}\right).
\]

Using Lemma~\ref{lem:tkinvuppertriang2} we similarly bound the terms in the upper triangle ($\ro{2\lfloor\mu_k\rfloor+1:N-2n}$). Hence
\begin{equation}
\label{eq:bounduppertriangleterms}
\left\|T^{-1}_{\ro{2\lfloor\mu_k\rfloor+1:N-2n},\co{N-2n+c}}\right\|_{\ellmat}<\frac{6}{4\mu_k}\left( \sum_{i=1}^{\frac{N}{2}-n-\lfloor\mu_k\rfloor+1}{\frac{1}{17^i}}\right).
\end{equation}
Hence, the final bound is
\[
\left\|T^{-1}_{\ro{2\lfloor\mu_k\rfloor+1:N},\co{N-2n+c}}\right\|_{\ellmat} <\frac{1}{\mu_k}\left(\frac{1}{4}+\frac{1}{17}+2\cdot\frac{3}{32}\right) < \frac{1}{2\mu_k}.
\]
\end{proof}

We have the following trivial component-wise uniform bound for $T_k^{-1}$ (derived analogously as Lemma 4.8 from \cite{cyranka_mucha}).
\begin{lemma}
\label{lem:tkinvelement}
Let $N> 2\mu_k$, the following entry-wise bound holds
\[
\left|T_{i,l}^{-1}\right| \leq \frac{\sqrt{2}}{\mu_k}
\]
for all $1\leq i, l\leq N$.
\end{lemma}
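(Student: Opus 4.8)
The plan is to bound $|T^{-1}_{i,l}|$ (recall $T=T_k$) entry by entry, reading each entry off the explicit recursive formulas of Corollary~\ref{corblocks} and Theorem~\ref{thm:explicit} and then applying the scalar estimates of Lemma~\ref{lema}. First, by the symmetries of Lemma~\ref{lemtksymmetry} one has $|T^{-1}_{i,l}|=|T^{-1}_{l,i}|$, so it suffices to bound the entries on and below the main diagonal. Using the $2\times2$ block partition of Corollary~\ref{corblocks} --- the $n$-th block occupying rows and columns $\{N-2n+1,N-2n+2\}$, $n=1,\dots,\frac N2$ --- every on-or-below-diagonal entry either sits inside a diagonal block $\widetilde I$ or strictly below one, and I would treat the two cases separately.

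For an entry inside the $n$-th diagonal block I would use the closed form \eqref{eq:I} for $\widetilde I$ together with the bounds $0<a_j<\frac{\sqrt2}{\mu_k}$, $0<\hat a_j<\frac1{\mu_k}$, $0<b_j<\frac1{\mu_k}$ of Lemma~\ref{lema}. Comparing the determinant $D$ in \eqref{eq:I} with the denominators of the recursions of Definition~\ref{defaahat} (and using $a_{n-1},b_{(N-2n)/2}\ge0$) yields $|\widetilde I_{11}|\le a_n<\frac{\sqrt2}{\mu_k}$ and $|\widetilde I_{12}|=|\widetilde I_{21}|=\frac{\mu_k}{D}\le\hat a_n<\frac1{\mu_k}$, while \eqref{eq:Iinvboundsinline} gives $|\widetilde I_{22}|<\frac1{\mu_k}$; hence every entry of every diagonal block is below $\frac{\sqrt2}{\mu_k}$. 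For an entry strictly below a block I would unroll the recursion of Theorem~\ref{thm:explicit}: in the submatrix attached to the $n$-th block, the column-$c$ entries ($c\in\{1,2\}$) in rows $2i+1$ and $2i+2$ are $-\mu_k^i a_{n-i}\big(\prod_{p=1}^{i-1}\hat a_{n-p}\big)\widetilde I_{2c}$ and $\mu_k^i\hat a_{n-i}\big(\prod_{p=1}^{i-1}\hat a_{n-p}\big)\widetilde I_{2c}$. Bounding each factor with Lemma~\ref{lema} and using $|\widetilde I_{2c}|<\frac1{\mu_k}$, the first of these is at most $\mu_k^i\cdot\frac{\sqrt2}{\mu_k}\cdot\frac1{\mu_k^{i-1}}\cdot\frac1{\mu_k}=\frac{\sqrt2}{\mu_k}$ and the second at most $\frac1{\mu_k}$; the cancellation of $\mu_k^i$ against the $(i-1)$ factors $\hat a$ and the factor $\widetilde I_{2c}$ is exactly what makes the bound independent of $N$. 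The remaining (upper-triangular) entries then follow from $|T^{-1}_{i,l}|=|T^{-1}_{l,i}|$.

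The only genuinely delicate point is the index bookkeeping: I would need to check that letting $n$ run over $1,\dots,\frac N2$ in Theorem~\ref{thm:explicit} exhausts all columns of $T^{-1}$, and that the rows ``below the block'' in the attached submatrix really correspond to strictly sub-diagonal positions of the full matrix $T^{-1}$, so that together with the diagonal blocks and the symmetry every entry is covered exactly once. All the analytic content is already in Lemma~\ref{lema} and Theorem~\ref{thm:explicit}, so no new estimate is needed --- which is why the result may be ``derived analogously'' to Lemma~4.8 of \cite{cyranka_mucha}. As an alternative one could read each entry directly off the lower-triangular formula of Lemma~\ref{lem:tkinvlowertriang} combined with its upper-triangular counterpart Lemma~\ref{lem:tkinvuppertriang2}, which is notationally heavier but avoids the symmetry step. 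Finally, the hypothesis $N>2\mu_k$ merely keeps us within the standing assumptions of the section; the argument above uses only the bounds of Lemma~\ref{lema}, which hold for any even $N>1$.
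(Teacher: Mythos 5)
Your argument is correct and follows essentially the same path as the paper's proof: bound the $2\times2$ diagonal block entries via \eqref{eq:I} and Lemma~\ref{lema}, propagate the bound down the column using the recursion of Theorem~\ref{thm:explicit} (your unrolled closed form $\mu_k^i a_{n-i}\big(\prod_{p=1}^{i-1}\hat a_{n-p}\big)\widetilde I_{2c}$ is just the explicit restatement of the paper's step-by-step iteration, and the cancellation $\mu_k^i\cdot\mu_k^{-(i-1)}\cdot\mu_k^{-1}=1$ is exactly the mechanism the paper relies on), and then extend to the upper triangle by the sign symmetry of Lemma~\ref{lemtksymmetry}. The only thing the paper does not spell out and you flag — the index bookkeeping that every $(i,l)$ with $i\ge l$ is hit by some block and some recursion step — is indeed routine, and your remark that Lemmas~\ref{lem:tkinvlowertriang} and~\ref{lem:tkinvuppertriang2} give an alternative route is accurate but not the paper's choice.
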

\begin{proof}
We will use the upper bounds from Lemma~\ref{lema}, the explicit formula for diagonal blocks from Corollary~\ref{corblocks}, recursive formulas from Theorem~\ref{thm:explicit} and the symmetry from Lemma~\ref{lemtksymmetry}. Let $\widetilde{I}$ be the diagonal block corresponding to $l$-th column of $T_k^{-1}$ \eqref{eq:I}, i.e.,
\[
\widetilde{I} =  \begin{bmatrix}d_{N-2n+1}+\mu_k^2b_{\frac{N-2n}{2}} & \mu_k \\ -\mu_k & d_{N-2n+2} +\mu_k^2a_{n-1}\end{bmatrix}^{-1},
\]
where $l = N-2n+1$ for $l$ odd, and $l=N-2n+2$ for $l$ even, 
and $D=d_{N-2n+1}d_{N-2n+2} +\mu_k^2a_{n-1} d_{N-2n+1}+d_{N-2n+2}\mu_k^2b_{(N-2n)/2} +\mu_k^4a_{n-1} b_{(N-2n)/2}+\mu_k^2$ is the determinant of $\widetilde{I}$.

The following entry-wise bounds hold for $\widetilde{I}$:
\begin{subequations}
\label{eq:eqIbds}
\begin{align}
    \left|\widetilde{I}_{11}\right| &= \frac{d_{N-2n+2} +\mu_k^2a_{n-1}}{D}< a_{n} < \frac{\sqrt{2}}{\mu_k},\\
    \left|\widetilde{I}_{12}\right|, \left|\widetilde{I}_{21}\right| &= \frac{\mu_k}{D}  < \hat{a}_{n} < \frac{1}{\mu_k},\label{eqI12bd}\\
    \left|\widetilde{I}_{22}\right| &= \frac{d_{N-2n+1} +\mu_k^2b_{(N-2n)/2}}{D}< b_{(N-2n+2)/2} < \frac{1}{\mu_k},\label{eqI22bd}
\end{align}
\end{subequations}

To bound the elements below the diagonal block we use the recursive formulas from Theorem~\ref{thm:explicit}, i.e. the first block slot below $\widetilde{I}$ is given by 
$\mu_k\begin{bmatrix}-a_{n-1}\\\hat{a}_{n-1}\end{bmatrix}\widetilde{I}_{\ro{2}}$ where $a_{n-1}$, $\hat{a}_{n-1}$ are upper bounded by $\frac{\sqrt{2}}{\mu_k}$, each component of $\widetilde{I}_{\ro{2}}$ is upper bounded by $\frac{1}{\mu_k}$ (using Lemma~\ref{lema}). Applying the recursion from Theorem~\ref{thm:explicit} repetitively for all of the diagonal blocks we obtain that the uniform upper bound $\frac{\sqrt{2}}{\mu_k}$ is true for the lower  triangle of $T_k^{-1}$ (including the diagonal blocks). From the symmetry formula Lemma~\ref{lemtksymmetry} this uniform upper bound is true for all entries in $T_k^{-1}$.
\end{proof}

\begin{theorem}
\label{thm:trivialTkinvbd}
It holds that
\[
\left\|T^{-1}\right\|_{\ellmat} \leq 2\sqrt{2} + \frac{1}{2\mu_k}.
\]
for all $N>2\mu_k$ (the bound is uniform with respect to even approximation dimension $N$).
\end{theorem}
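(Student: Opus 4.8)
The plan is to bound $\|T^{-1}\|_{\ellmat}$ by splitting each column into its ``tail'' part (rows $2\lfloor\mu_k\rfloor+1$ through $N$) and its ``head'' part (rows $1$ through $2\lfloor\mu_k\rfloor$), handle the tail part by invoking Lemma~\ref{lem:tkinvsplit}, and control the head part by the crude entry-wise bound of Lemma~\ref{lem:tkinvelement}. Precisely, fix a column index $l$. By definition of $\|\cdot\|_{\ellmat}$ as the maximum over columns of the sum of absolute values of entries, it suffices to bound $\sum_{i=1}^{N}|T^{-1}_{i,l}|$ uniformly in $l$ and $N$.

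First I would write
\[
\sum_{i=1}^{N}\left|T^{-1}_{i,l}\right| = \sum_{i=1}^{2\lfloor\mu_k\rfloor}\left|T^{-1}_{i,l}\right| + \sum_{i=2\lfloor\mu_k\rfloor+1}^{N}\left|T^{-1}_{i,l}\right|.
\]
For the second sum, Lemma~\ref{lem:tkinvsplit} (more precisely its proof, specifically the estimate~\eqref{eq:tkinvtail} and its analogue for $l>2\lfloor\mu_k\rfloor+1$) gives that the tail contribution of every column is strictly less than $\frac{1}{2\mu_k}$. For the first sum, there are exactly $2\lfloor\mu_k\rfloor \le 2\mu_k$ entries, and by Lemma~\ref{lem:tkinvelement} each satisfies $|T^{-1}_{i,l}| \le \frac{\sqrt{2}}{\mu_k}$, so the head contribution is at most $2\mu_k \cdot \frac{\sqrt{2}}{\mu_k} = 2\sqrt{2}$. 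Adding the two pieces yields $\sum_{i=1}^{N}|T^{-1}_{i,l}| \le 2\sqrt{2} + \frac{1}{2\mu_k}$, and taking the maximum over $l$ gives $\|T^{-1}\|_{\ellmat} \le 2\sqrt{2} + \frac{1}{2\mu_k}$. Since both input lemmas require $N>2\mu_k$ (so that $2\lfloor\mu_k\rfloor < N$ and the tail region is non-empty and well-defined), the hypothesis $N>2\mu_k$ is exactly what is needed, and the bound is manifestly independent of $N$.

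The only mild subtlety — and the step I expect to require the most care — is the bookkeeping that Lemma~\ref{lem:tkinvsplit} is stated as a bound on the full $\|T^{-1}\|_{\ellmat}$ rather than on a single tail-of-column, so I would instead cite the intermediate estimate inside its proof, namely that for every column index $l$ with $1 \le l \le N$ one has $\bigl\|T^{-1}_{\ro{2\lfloor\mu_k\rfloor+1:N},\co{l}}\bigr\|_{\ellmat} < \frac{1}{2\mu_k}$, which was established there both for $l \le 2\lfloor\mu_k\rfloor+1$ (via~\eqref{eq:tkinvtail}) and for $l > 2\lfloor\mu_k\rfloor+1$ (via the upper-triangle bound~\eqref{eq:bounduppertriangleterms}). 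Everything else is a direct substitution of the two lemmas, so the proof is short.

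\begin{proof}
Fix a column index $l$ with $1\leq l\leq N$. By the definition \eqref{eq:Bell1matrix} of $\|\cdot\|_{\ellmat}$ it suffices to bound $\sum_{i=1}^{N}|T^{-1}_{i,l}|$ uniformly in $l$ and $N$. Split
\[
\sum_{i=1}^{N}\left|T^{-1}_{i,l}\right| = \sum_{i=1}^{2\lfloor\mu_k\rfloor}\left|T^{-1}_{i,l}\right| + \left\|T^{-1}_{\ro{2\lfloor\mu_k\rfloor+1:N},\co{l}}\right\|_{\ellmat}.
\]
For the tail term, the argument in the proof of Lemma~\ref{lem:tkinvsplit} (the bound \eqref{eq:tkinvtail} for $l\leq 2\lfloor\mu_k\rfloor+1$ together with the analogous estimate using \eqref{eq:bounduppertriangleterms} for $l>2\lfloor\mu_k\rfloor+1$) shows that
\[
\left\|T^{-1}_{\ro{2\lfloor\mu_k\rfloor+1:N},\co{l}}\right\|_{\ellmat} < \frac{1}{2\mu_k}.
\]
For the head term, the sum has at most $2\lfloor\mu_k\rfloor\leq 2\mu_k$ summands, and by Lemma~\ref{lem:tkinvelement} each satisfies $|T^{-1}_{i,l}|\leq \frac{\sqrt{2}}{\mu_k}$, so
\[
\sum_{i=1}^{2\lfloor\mu_k\rfloor}\left|T^{-1}_{i,l}\right| \leq 2\mu_k\cdot\frac{\sqrt{2}}{\mu_k} = 2\sqrt{2}.
\]
Combining the two estimates gives $\sum_{i=1}^{N}|T^{-1}_{i,l}| \leq 2\sqrt{2} + \frac{1}{2\mu_k}$. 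Since $l$ was arbitrary, $\|T^{-1}\|_{\ellmat}\leq 2\sqrt{2}+\frac{1}{2\mu_k}$, and the bound does not depend on the (even) approximation dimension $N>2\mu_k$.
\end{proof}
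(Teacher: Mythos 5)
Your proof is correct and follows essentially the same route as the paper: split each column into head rows $1\colon2\lfloor\mu_k\rfloor$ and tail rows $2\lfloor\mu_k\rfloor+1\colon N$, use the entry-wise bound $|T^{-1}_{i,l}|\le\sqrt{2}/\mu_k$ (Lemma~\ref{lem:tkinvelement}) on the head, and the $<1/(2\mu_k)$ tail bound from Lemma~\ref{lem:tkinvsplit}. The one thing you could tighten is the worry you flag about Lemma~\ref{lem:tkinvsplit} being ``stated at the level of $\|\cdot\|_{\ellmat}$ rather than per column'': that concern is unnecessary, because Lemma~\ref{lem:tkinvsplit} as stated, $\|T^{-1}\|_{\ellmat} < \|T^{-1}_{\ro{1:2\lfloor\mu_k\rfloor}}\|_{\ellmat} + 1/(2\mu_k)$, is exactly the split you need, and the paper's own proof simply cites it together with Lemma~\ref{lem:tkinvelement} and is done in two lines. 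Re-deriving the per-column decomposition is harmless and perhaps clearer, but you could shorten the argument by just invoking the lemma.
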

\begin{proof}
Using Lemma~\ref{lem:tkinvsplit} we have that $\left\|T^{-1}\right\|_{\ellmat} \leq \left\|T_{\ro{1:2\lfloor\mu_k\rfloor}}^{-1}\right\|_{\ellmat} + \frac{1}{2\mu_k}$. Using the element-wise bound from Lemma~\ref{lem:tkinvelement} the first term is upper bounded by
\[
\left\|T_{\ro{1:2\lfloor\mu_k\rfloor}}^{-1}\right\|_{\ellmat} \leq 2\lfloor\mu_k\rfloor \frac{\sqrt{2}}{\mu_k} \leq 2\sqrt{2}.
\]
\end{proof}
Using the tighter integral bounds derived in the Appendix, we obtain the following result.

\begin{theorem}
\label{thm:tkimprov}
Let $\mu_k>10$. Then, for all $N>2\mu_k$ (the bounds are uniform with respect to even approximation dimension $N$), it holds that
\[
\left\|T_k^{-1}\right\|_{\ellmat} \le 
\begin{cases}
0.806 + \frac{1}{20}, & \text{if } \mu_k>10,
\\
0.736 + \frac{1}{200}, & \text{if } \mu_k>100,
\\
0.727 + \frac{1}{2000}, & \text{if } \mu_k>10.
\end{cases}
\]


\end{theorem}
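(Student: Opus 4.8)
The plan is to sharpen Theorem~\ref{thm:trivialTkinvbd}, whose only loss comes from applying the element-wise estimate $|T^{-1}_{i,l}|\le \sqrt2/\mu_k$ of Lemma~\ref{lem:tkinvelement} simultaneously to all $2\lfloor\mu_k\rfloor$ rows retained in Lemma~\ref{lem:tkinvsplit}. Instead we keep the explicit description of the entries of $T^{-1}$---the $2\times2$ diagonal blocks $\widetilde{I}$ of Corollary~\ref{corblocks} and the recursion of Theorem~\ref{thm:explicit}, assembled through Lemma~\ref{lem:tkinvlowertriang} and Lemma~\ref{lem:tkinvuppertriang2}---and sum the resulting closed forms using the tighter integral comparison estimates of the Appendix.

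First I would invoke Lemma~\ref{lem:tkinvsplit} to reduce the statement to a bound on $\|T^{-1}_{\ro{1:2\lfloor\mu_k\rfloor}}\|_{\ellmat}$; the additive term $\tfrac1{2\mu_k}$ appearing in each of the three cases is precisely the uniform-in-$N$ tail contribution already estimated there. Using the symmetry relations of Lemma~\ref{lemtksymmetry} (so that $\sum_j|T^{-1}_{i,j}|=\sum_j|T^{-1}_{j,i}|$), it then suffices to bound $\sum_i|T^{-1}_{i,l}|$ uniformly in $N$ over the first $2\lfloor\mu_k\rfloor$ columns $l$ of $T_k^{-1}$. Writing such a column as $l=N-2n+c$ with $c\in\{1,2\}$ (so $n$ lies within $\lfloor\mu_k\rfloor$ of $N/2$) and applying Lemma~\ref{lem:tkinvlowertriang} together with its upper-triangular counterpart Lemma~\ref{lem:tkinvuppertriang2}, this column sum equals $|\widetilde{I}_{1c}|+|\widetilde{I}_{2c}|$ plus two geometric series whose generic term has the form $\mu_k^{\,i}(a_{n\mp i}+\hat{a}_{n\mp i})\big(\prod_p\hat{a}_{n\mp p}\big)\,|\widetilde{I}_{2c}|$, where $\widetilde{I}$ is the block \eqref{eq:I} attached to that column.

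These two ingredients are now estimated more carefully than in Theorem~\ref{thm:trivialTkinvbd}. The geometric factors are controlled via the monotonicity of Lemma~\ref{lemmono} and the quantitative decay of Lemma~\ref{lema2}: once the relevant diagonal indices exceed a fixed multiple of $\mu_k$, the factors $\mu_k\hat{a}_j$ fall below a small constant (for instance the value $\tfrac1{17}$ already exploited in Lemma~\ref{lem:tkinvsplit}), so the off-diagonal tails sum to $O(1/\mu_k)$ and do not affect the leading constant. The improvement lies in the diagonal-block part: by the closed form \eqref{eq:I} and the bounds of Lemma~\ref{lema} on $a_{n-1}$ and $b_{(N-2n)/2}$, the quantity $|\widetilde{I}_{1c}|+|\widetilde{I}_{2c}|$ is a ratio with numerator of size $O(d_l+\mu_k)$ and determinant $\gtrsim d_l^2+\mu_k^2$ (recall $d_l=2l$); its maximum over the admissible columns, and the accompanying adjacent corrections, are controlled by comparing the relevant sums over $l$ (equivalently over $n$) to integrals---which is exactly the content of the tighter estimates of the Appendix. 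This replaces the factor $2\lfloor\mu_k\rfloor\cdot(\sqrt2/\mu_k)=2\sqrt2$ of Theorem~\ref{thm:trivialTkinvbd} by an absolute constant independent of $N$. Assembling the diagonal-block constant, the finitely many adjacent corrections and the geometric remainder---and certifying the resulting arithmetic with interval arithmetic---gives $\|T^{-1}_{\ro{1:2\lfloor\mu_k\rfloor}}\|_{\ellmat}\le 0.806$ when $\mu_k>10$; since every correction term carries a factor $O(1/\mu_k)$, re-running the same integral and geometric-series estimates under the stronger hypotheses of the remaining two cases yields the sharper constants $0.736$ and $0.727$. Adding $\tfrac1{2\mu_k}$ from Lemma~\ref{lem:tkinvsplit} gives the three stated inequalities.

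I expect the main obstacle to be the bookkeeping in the transition band $l\sim\mu_k$, where neither the ``$d_l\ll\mu_k$'' nor the ``$d_l\gg\mu_k$'' simplification of $\widetilde{I}$ is accurate, together with the simultaneous control of all the geometrically small off-diagonal blocks summed over the $O(\mu_k)$ columns in play; it is precisely this that forces the passage to the integral comparison lemmas of the Appendix and is responsible for the $\mu_k$-dependent thresholds appearing in the statement.
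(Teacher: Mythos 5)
Your proposal follows the same route as the paper's Appendix~\ref{appxspacereg}: reduce via Lemma~\ref{lem:tkinvsplit} to the first $2\lfloor\mu_k\rfloor$ rows (this contributes the $1/(2\mu_k)$ remainder), then use the explicit column-sum formulas from Lemmas~\ref{lem:tkinvlowertriang} and~\ref{lem:tkinvuppertriang2} and bound the resulting sums by an integral comparison. A couple of details are misattributed, though. The paper's decisive intermediate step is Lemma~\ref{lem:totalbd}, which uses only the crude bound $\mu_k\hat a_j<1$ from Lemma~\ref{lema} to collapse the entire column sum --- diagonal block plus both geometric tails --- into the single, column-index-independent quantity $\sum_{j=1}^{N/2}(a_j+\hat a_j)$; the finer estimate $\mu_k\hat a_j\le 1/17$ from Lemma~\ref{lema2} that you invoke is what drives the earlier Lemma~\ref{lem:tkinvsplit}, not this theorem. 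The integral comparison is then applied to $\sum_j a_j$ and $\sum_j\hat a_j$ directly, after feeding the lower bound $a_{j-1}\ge 4/(17\mu_k)$ back into the recursion to sharpen the integrand; it is not a sum over columns $l$. Finally, the resulting definite integral is evaluated by a symbolic Mathematica computation rather than interval arithmetic. None of this affects the viability of the plan --- these are bookkeeping and presentation differences, and the constants come out the same --- but the clean reduction to $\sum_j(a_j+\hat a_j)$ via Lemma~\ref{lem:totalbd} is the key simplification you would want to write down explicitly.
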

\begin{proof}[Proof of Theorem~\ref{thm:tkimprov}]
We present the details in Appendix~\ref{appxspacereg}. 

The fundamental idea behind the proof is to apply in practice Theorem~\ref{thm:explicit} and Lemma~\ref{lemtksymmetry}, which provide explicit recursive formulas for all $T_k^{-1}$ entries. By summing up and unveiling the recursion in the formulas, an explicit formula for the $\ell^1$ norm of the lower triangle (including the diagonal) of $T_k^{-1}$ follows. By analogous computations, we obtain an explicit formula for the $\ell^1$ norm of the upper triangle by reversing the order of the diagonal elements.

Using the derived explicit formulas, $\|T_k^{-1}\|_{\ellmat}$ is bounded by the sequence sum of $a_j$'s and $\hat{a}_j$'s from Def.~\ref{defaahat}. The resulting finite sum is in turn bounded by a definite integral that can be computed explicitly (we performed a symbolic Mathematica computation), and this determines the bound for the first term $\left\|T_{\ro{1:2\lfloor\mu_k\rfloor}}^{-1}\right\|_{\ellmat}$ appearing in Lemma~\ref{lem:tkinvsplit} (for sufficiently large $\mu_k$ as 	in the statement of the Theorem), the remainder according to Lemma~\ref{lem:tkinvsplit} is bounded by $1/2\mu_k$.
\end{proof}
\begin{figure}[h!]
\includegraphics[width=\textwidth]{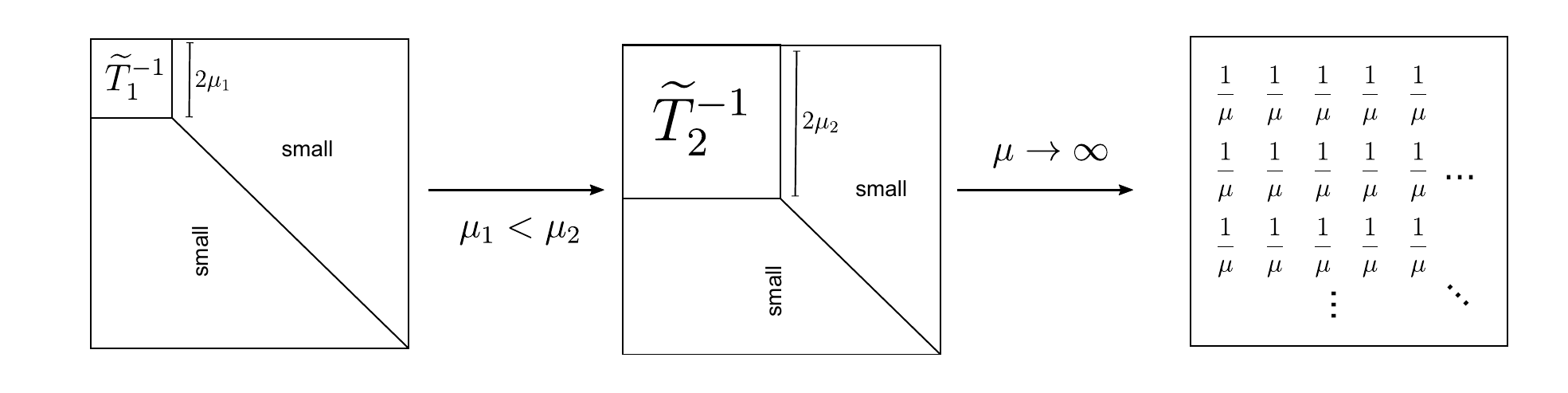}
\caption{ Diagram motivating requirement of performing our analysis to compute the stability of the inverse tridiagonal operator $T_k^{-1}$ with respect to the approximation dimension. 
Notation $\widetilde{T}_1,\widetilde{T}_2$ mean the upper left finite dimensional block of $T_k^{-1}$ having $\mu_1, \mu_2$ as the off-diagonal elements, which are denoted by 
$(T_k^{-1})_{ \ro{1:2\mu},\ \co{1:2\mu} }$. Intuitively, all the entries of the inverse matrices outside $\widetilde{T}_1$ and $\widetilde{T}_2$  are small, as they lie in the regime where the corresponding part of $T_k$ is diagonally dominant.
}
\label{figblock}
\end{figure}

\subsubsection{Uniform bounds for \boldmath$\|\cLpkinv\|_{B(\ell^1)}$\unboldmath\ and the ultimate bound for \boldmath$\|\cL_k^{-1}\|_{B(\ell^1)}$\unboldmath\  }

We are going to use the following well-known result in linear algebra about the inverse of the sum of
an invertible matrix $A$ and a rank one perturbation $uv^T$ ($u,v$ are vectors), known as the
\emph{Sherman-Morrison formula} \cite{SM1, SM2, SM3, SM4, SM5}.

\begin{lemma}[\cite{SM5}]
\label{lemsm}
Suppose $A$ is an \emph{invertible real square matrix} and $u,v$ are \emph{column vectors}. Suppose furthermore that $1 + v^TA^{-1}u\neq 0$. Then 
the \emph{Sherman-Morrison} formula states that 
\[
(A+uv^T)^{-1} = A^{-1} - \frac{A^{-1}uv^TA^{-1}}{1+v^TA^{-1}u}.
\]
Here, $uv^T$ is the outer product of two vectors $u$ and $v$.
\end{lemma}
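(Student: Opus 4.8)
The plan is to prove the identity by direct verification rather than by invoking anything deeper: I would exhibit the claimed operator as a right (hence two-sided) inverse of $A+uv^T$ by multiplying it out. Since $A+uv^T$ is a square matrix, producing a one-sided inverse already forces invertibility, but I would record both products for completeness. The only structural fact I need is that $A$ is invertible (so $A^{-1}$ makes sense) and that the scalar $1+v^TA^{-1}u$ is nonzero (so the denominator is legitimate); both are hypotheses of the lemma.

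First I would abbreviate $s \bydef v^TA^{-1}u \in \R$, a \emph{scalar}, and $g \bydef 1+s$, which is nonzero by assumption, and write $B \bydef A^{-1} - g^{-1} A^{-1}uv^TA^{-1}$ for the candidate inverse. Next I would expand
\[
(A+uv^T)B = AB + uv^TB = \left( I - g^{-1} uv^TA^{-1} \right) + uv^TB .
\]
The key observation for the last term is that inside the product $uv^TA^{-1}uv^TA^{-1}$ the middle factor $v^TA^{-1}u$ equals the scalar $s$ and can be pulled out, so
\[
uv^TB = uv^TA^{-1} - g^{-1} s\, uv^TA^{-1} = \frac{g-s}{g}\, uv^TA^{-1} = \frac{1}{g}\, uv^TA^{-1},
\]
using the elementary identity $g-s=1$. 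Adding the two contributions, the $g^{-1}uv^TA^{-1}$ terms cancel and $(A+uv^T)B = I$. The mirror computation — now factoring the scalar $s$ out of $A^{-1}uv^TA^{-1}uv^T$ on the other side — gives $B(A+uv^T)=I$. Hence $A+uv^T$ is invertible with inverse $B$, which is exactly the stated formula.

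There is no genuine obstacle here: the entire content of the verification is the remark that $v^TA^{-1}u$ is a scalar and therefore commutes past the rank-one factors, combined with $g-s=1$. So the ``hard part'' is merely bookkeeping the denominator $g = 1+v^TA^{-1}u$. Since this is a textbook fact already attributed to \cite{SM5} in the statement, one could simply cite it; I would nonetheless include the two-line computation above to keep the linear-algebra preliminaries of this section self-contained.
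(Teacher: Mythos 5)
Your proof is correct. It is the standard direct verification of the Sherman--Morrison formula: abbreviate the scalar $s = v^TA^{-1}u$ and the denominator $g = 1+s$, multiply the candidate inverse against $A+uv^T$, and use the fact that $s$ commutes as a scalar to collapse $uv^TA^{-1}uv^TA^{-1}$ to $s\,uv^TA^{-1}$, so that the two rank-one contributions cancel via $g - s = 1$. The mirror computation for the left product is symmetric, and since we are dealing with square matrices either one-sided check would suffice.

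For comparison: the paper does not prove this lemma at all; it simply states it with the citation to \cite{SM5} and proceeds to use it (in Corollary~\ref{lkcor}) to invert the rank-one perturbation $\cLp_k = M_k + uv^T$. Your choice to include the short verification is a stylistic one — as you note, it keeps the linear-algebra preliminaries self-contained at the cost of two lines, whereas the paper defers to the literature. Neither choice is wrong; for a result this classical a citation is perfectly acceptable, and your verification adds nothing the references do not already contain, but it also introduces no error.
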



Now let us apply Lemma~\ref{lemsm} for the inverse of the linear operator that we consider. We recall its form below. $\cLp_{k}$ is being decomposed into a sum of a tridiagonal operator and a single non-zero row operator
\begin{equation}
\label{plusrone2}
  \cLp_{k} = M_k +  U  = \left[ \begin{array}{cccc} 1&0&\cdots&0\\\eigk&\ &\ &\ \\0&\ &T_k&\ \\\vdots &\ &\ &\ \end{array}\right] + 
\left[ \begin{array}{cccc}0&-2&2&\dots\\\ &\ &\ &\ \\\ &\ &0&\ \\\ &\ &\ &\ \\ \end{array} \right],
\end{equation}
where 
\begin{equation}
\label{tridiag2}
T_{k} = \left[ \begin{array}{ccccc} 
 2 &-\eigk&0&\cdots&\ \\
\eigk& 4 &-\eigk&0&\cdots\\
\ddots&\ddots&\ddots&\ddots&\ddots\\
\dots&0&\eigk& 2(N-1) &-\eigk\\
\ &\dots&0&\eigk& 2N
\end{array}\right].
\end{equation}

Observe that in our case, we apply the Sherman-Morrison formula for \eqref{plusrone2}, and we put
\begin{equation*}
A+uv^T = \cLpk,\ A = M_k,\ uv^T = U,\ u = \left[1,0,\dots,0\right]^T,\ 
v^T = 2\left[0,-1,1,\dots,(-1)^{k+1},\dots\right].
\end{equation*}

$M_k$ is invertible; being a tridiagonal matrix, it can be checked by the recursive determinant computation that it is 
bounded away from zero for all $k$ and $N$.

\begin{lemma}
\label{akinv}
The inverse of $M_k$ in \eqref{plusrone} is given by
\begin{equation}
\label{eqakinv}
M_k^{-1} = 
\begin{bmatrix} 1 & 0\\\begin{smallmatrix}\eigk\\0\\\vphantom{\int\limits^x}\smash{\vdots}\end{smallmatrix}&T_k\end{bmatrix}^{-1} = 
\left[ \begin{array}{cccc} 1&0&\cdots&0\\-\eigk\cdot \left(T_{k}^{-1}\right)_{\co{1}}&\ &T_k^{-1}&\ \end{array}\right].
\end{equation}
\end{lemma}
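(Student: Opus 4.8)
The plan is to exploit the block lower-triangular structure of $M_k$ and simply verify the claimed formula by block multiplication. Reading $M_k$ off \eqref{plusrone}, write it in $2\times 2$ block form as
\[
M_k = \begin{bmatrix} 1 & 0 \\ c & T_k \end{bmatrix}, \qquad c \bydef \eigk\, e_1 \in \R^{N},
\]
where $e_1 = (1,0,\dots,0)^T$ is the first standard basis vector of $\R^{N}$; indeed, the portion of the first column of $M_k$ below the $(1,1)$ entry is the single nonzero entry $\eigk$ in position $(2,1)$, and the bottom-right $N\times N$ block is $T_k$ as in \eqref{tridiag2}. Before anything else I would record that $T_k$ is invertible: being tridiagonal of the form \eqref{tridiagser} with $\eigk \neq 0$, its determinant is bounded away from zero by the recursive determinant computation recalled in \cite{cyranka_mucha}, so $T_k^{-1}$ exists. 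Since the $(1,1)$ block is the invertible scalar $1$, the $(1,2)$ block is zero, and the $(2,2)$ block $T_k$ is invertible, $M_k$ is a block lower-triangular matrix with invertible diagonal blocks and hence is invertible.

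Next I would introduce the candidate inverse
\[
P \bydef \begin{bmatrix} 1 & 0 \\ -T_k^{-1} c & T_k^{-1} \end{bmatrix}
\]
and check $M_k P = I_{N+1}$ by direct block multiplication: the $(1,1)$ block gives $1\cdot 1 = 1$, the $(1,2)$ block gives $0$, the $(2,1)$ block gives $c\cdot 1 + T_k\bigl(-T_k^{-1}c\bigr) = c - c = 0$, and the $(2,2)$ block gives $T_k T_k^{-1} = I_N$. Since $M_k$ and $P$ are finite square matrices, $M_k P = I_{N+1}$ already yields $M_k^{-1} = P$ (alternatively one verifies $P M_k = I_{N+1}$ the same way).

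Finally I would rewrite the lower-left block in the stated form: because $c = \eigk\, e_1$, one has $-T_k^{-1} c = -\eigk\, T_k^{-1} e_1 = -\eigk\left(T_k^{-1}\right)_{\co{1}}$, multiplication of $T_k^{-1}$ by $e_1$ extracting precisely its first column. Substituting this into $P$ gives exactly \eqref{eqakinv}. There is essentially no serious obstacle here; the only points needing a moment of care are (i) confirming that the sub-diagonal part of the first column of $M_k$ is the single-entry vector $\eigk\, e_1$, so that $T_k^{-1}c$ collapses to one column of $T_k^{-1}$, and (ii) invoking the already-established invertibility of the tridiagonal block $T_k$.
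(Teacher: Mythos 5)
Your proposal is correct and takes essentially the same approach as the paper: both exploit the block lower-triangular structure of $M_k$, with the paper solving the block system $M_k \cdot \mathrm{Inv} = I$ for the four unknown blocks while you verify the candidate inverse $P$ by direct block multiplication — two phrasings of the same computation. The extra care you take in noting $c = \eigk e_1$ and recalling the invertibility of $T_k$ from the recursive determinant argument matches the surrounding text in the paper.
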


\begin{proof} Let us denote $\inv \bydef M_k^{-1}$.
From solving the linear system of equations
\[
\begin{bmatrix} 1 & 0\\\begin{smallmatrix}\eigk\\0\\\vphantom{\int\limits^x}\smash{\vdots}\end{smallmatrix}&T_k\end{bmatrix}\cdot
\left[\begin{array}{cc}\inv_{11}&\inv_{12}\\\inv_{21}&\inv_{22}\end{array}\right]=
\left[\begin{array}{cc}1&0\\0&\id \end{array}\right].
\]
we have that 
\begin{equation*}
\begin{array}{ll}
\inv_{11}=1,&
\inv_{12}=0,\\
\inv_{21}=T_k^{-1}\begin{bmatrix}-\eigk\\0\\\vdots\\0\end{bmatrix} = -\eigk\cdot \left(T_{k}^{-1}\right)_{\co{1}},&
\inv_{22}=T_k^{-1}.
\end{array}
\end{equation*}
\end{proof}
\begin{corollary}
\label{lkcor}
Let $M_k$ be the first term in the decomposition of $\cLp_{k}$ in \eqref{plusrone2}. 
\begin{equation}
\label{akuk}
\cLpkinv = M_k^{-1} - \frac{M_k^{-1}U M_k^{-1}}{1 + v^TM_k^{-1}u}.
\end{equation}
\end{corollary}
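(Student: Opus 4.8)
The plan is to obtain Corollary~\ref{lkcor} as a direct application of the Sherman--Morrison formula (Lemma~\ref{lemsm}) to the rank-one splitting $\cLpk = M_k + U$ recorded in \eqref{plusrone2}. Concretely, I would set $A \bydef M_k$, take the column vector $u \bydef (1,0,0,\dots)^T$, and take the column vector $v$ with $v^T = 2(0,-1,1,-1,1,\dots)$, so that $uv^T$ is exactly the single-nonzero-row operator $U$ in \eqref{plusrone2}: its only nonzero row is the first one, namely $(0,-2,2,-2,2,\dots)$, which is the tail of the first row of $\cLpk$ lying outside $M_k$.

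The steps are then the following. (i) Record that $M_k$ is invertible: since $M_k$ is block lower triangular with diagonal blocks $1$ and $T_k$ (see \eqref{plusrone}), one has $\det M_k = \det T_k$, and $\det T_k \neq 0$ by the recursive determinant computation for tridiagonal matrices used in \cite{cyranka_mucha} (the diagonal entries $2j$ being positive). (ii) Recall the explicit form of $M_k^{-1}$ from Lemma~\ref{akinv}; in particular $M_k^{-1}u = (M_k^{-1})_{\co{1}} = \big(1,\ -\mu_k (T_k^{-1})_{\co{1}}\big)^T$. (iii) Verify the non-degeneracy hypothesis of Lemma~\ref{lemsm}, i.e. that the scalar $1 + v^T M_k^{-1} u$ is nonzero. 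Using (ii) and $v_1 = 0$, this scalar equals $1 - 2\mu_k \sum_{i \ge 1} (-1)^i (T_k^{-1})_{i,1}$; by the matrix determinant lemma $\det \cLpk = \det M_k \cdot \big(1 + v^T M_k^{-1} u\big)$, so $1 + v^T M_k^{-1} u \neq 0$ is equivalent to $\cLpk$ being invertible, which is the standing assumption under which \eqref{akuk} is asserted. (iv) With (i)--(iii) in place, Lemma~\ref{lemsm} applies verbatim with $A = M_k$ and $uv^T = U$, and its conclusion is precisely \eqref{akuk}.

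There is no real obstacle in this corollary: once the splitting $\cLpk = M_k + uv^T$ is written down and $M_k^{-1}$ is known via Lemma~\ref{akinv}, the statement is a substitution into a classical identity. The only point deserving a word of care — which I would flag explicitly rather than treat as a difficulty — is the invertibility input in step (iii): one either adopts ``$\cLpk$ is invertible'' as the running hypothesis (equivalently $1 + v^T M_k^{-1} u \neq 0$), or, for large $k$, defers its justification to the subsequent norm estimate of the form $\|\cLpkinv\|_{B(\ell^1)} \le 2\|T_k^{-1}\|_{\ellmat}$, whose proof simultaneously keeps the scalar $1 + v^T M_k^{-1} u$ away from zero by means of the uniform bounds on $\|T_k^{-1}\|_{\ellmat}$ from Theorems~\ref{thm:trivialTkinvbd}--\ref{thm:tkimprov}. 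The substantive work of this subsection therefore lies in those $\ell^1$-norm estimates, not in the Sherman--Morrison rewriting itself.
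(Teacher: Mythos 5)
Your proposal is correct and mirrors what the paper does: the corollary is a verbatim application of Lemma~\ref{lemsm} to the rank-one splitting $\cLpk = M_k + U$ with $A = M_k$, $u = (1,0,\dots,0)^T$, $v^T = 2(0,-1,1,-1,\dots)$, and $M_k$ invertible by the tridiagonal determinant recursion. One small refinement to your step (iii): the nondegeneracy condition does not need to be deferred or hypothesized --- the paper's sign analysis in the proof of Theorem~\ref{thmmain2} (via Lemma~\ref{lemtksymmetry} and equation~\eqref{eqpossum}) shows that $v^T M_k^{-1} u = 2|\mu_k| \|(T_k^{-1})_{\co{1}}\|_{\ellmat} \ge 0$, hence $1 + v^T M_k^{-1} u \ge 1 > 0$ automatically; this is a cleaner justification than invoking the uniform norm bounds on $\|T_k^{-1}\|_{\ellmat}$, which bound the denominator from above rather than away from zero.
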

\begin{theorem}
\label{thmmain2}
Let $\mu_k > 0$. Let $T_k$ be the tridiagonal matrix \eqref{tridiag2}, and $\cLp_k$ be the matrix having the diagonal block structure as in \eqref{plusrone2}.

For all $k$ such that $\mu_k \geq 0$ and for all $N>0$ it holds that
\[
\|\cLpkinv\|_{B(\ell^1)}\leq 
\max\left\{2\left\| T_k^{-1}\right\|_{\ellmat}, 1\right\},
\]
where $1$ above comes from the fact that $\|\cLpkinv_{\co{1}}\|_{\ell^1}=1$.
\end{theorem}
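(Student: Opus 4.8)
The plan is to turn the Sherman--Morrison formula of Corollary~\ref{lkcor} (equation~\eqref{akuk}) into fully explicit formulas for the columns of $\cLpkinv$, written only in terms of $T_k^{-1}$, and then to estimate these columns in the norm \eqref{eq:Bell1matrix}. Throughout, $N$ is even as elsewhere in this section, and I would handle $\mu_k>0$ first; the case $\mu_k=0$ is immediate ($T_k$ is then diagonal, $\delta=1$ below, and the same formulas apply).

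First I would compute the scalar $\delta\bydef 1+v^TM_k^{-1}u$ occurring in \eqref{akuk}. Writing $w\bydef (T_k^{-1})_{\co{1}}$, the block form \eqref{eqakinv} of $M_k^{-1}$ together with $u=[1,0,\dots,0]^T$ gives $M_k^{-1}u=[\,1;\,-\mu_k w\,]^T$, and since $v=2[0,-1,1,-1,\dots]^T$ has vanishing first entry, $\delta=1-\mu_k\sum_{i\ge1}v_{i+1}w_i=1-2\mu_k\sum_{i\ge1}(-1)^iw_i$. The key step---the only one that is not bookkeeping---is to show that the first column of $T_k^{-1}$ has alternating signs, $\operatorname{sign}\big((T_k^{-1})_{i,1}\big)=(-1)^{i+1}$ for all $i$. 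I would read this off from the recursive formulas of Theorem~\ref{thm:explicit} with $n=N/2$ (so that $\widetilde T^{-1}=T_k^{-1}$), using that the determinant $D$ in the diagonal block \eqref{eq:I} of Corollary~\ref{corblocks} is positive and that the sequences $a_j,\hat a_j$ of Definition~\ref{defaahat} are nonnegative (strictly positive for $\mu_k>0$) by Lemma~\ref{lema}: the bottom $2\times2$ diagonal block has first column $\propto[\,\text{positive};\,-\mu_k\,]^T$, and each step of the recursion produces two further entries of the column equal to $-\mu_k a_{\cdot}\le0$ and $\mu_k\hat a_{\cdot}\ge0$ times the previous one, so the signs keep alternating. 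Consequently $-\sum_i(-1)^iw_i=\|w\|_{\ellmat}$, whence
\[
\delta\;=\;1+2\mu_k\|(T_k^{-1})_{\co{1}}\|_{\ellmat}\;\ge\;1\;>\;0 ,
\]
so the Sherman--Morrison hypothesis $1+v^TM_k^{-1}u\neq0$ of Lemma~\ref{lemsm} holds automatically, for every $k$ and $N$.

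Next I would substitute into \eqref{akuk}. Since $UM_k^{-1}=uv^TM_k^{-1}$, the $n$-th column of $M_k^{-1}UM_k^{-1}$ equals $\big(v^T(M_k^{-1})_{\co{n}}\big)M_k^{-1}u$; using $v^T(M_k^{-1})_{\co{1}}=\delta-1$ and, for $n\ge2$ with $m\bydef n-1$, the scalar $r_m\bydef v^T(M_k^{-1})_{\co{n}}=2\sum_i(-1)^i(T_k^{-1})_{i,m}$, a short simplification gives
\[
\cLpkinv_{\co{1}}=\tfrac1\delta\big[\,1;\,-\mu_k w\,\big]^T ,
\qquad
\cLpkinv_{\co{n}}=\Big[\,-\tfrac{r_m}{\delta};\;(T_k^{-1})_{\co{m}}+\tfrac{\mu_k r_m}{\delta}\,w\,\Big]^T\quad(n\ge2) .
\]
Recalling from \eqref{eq:vell1} that the first coordinate carries weight $1$ and the rest weight $2$, the first formula yields $\|\cLpkinv_{\co{1}}\|_{\ell^1}=\tfrac1\delta\big(1+2\mu_k\|w\|_{\ellmat}\big)=1$, which is the origin of the $1$ in the statement. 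For $n\ge2$, the triangle inequality combined with $\delta=1+2\mu_k\|w\|_{\ellmat}$ gives
\[
\|\cLpkinv_{\co{n}}\|_{\ell^1}\le\frac{|r_m|}{\delta}\big(1+2\mu_k\|w\|_{\ellmat}\big)+2\|(T_k^{-1})_{\co{m}}\|_{\ellmat}=|r_m|+2\|(T_k^{-1})_{\co{m}}\|_{\ellmat} ,
\]
and since $|r_m|\le2\|(T_k^{-1})_{\co{m}}\|_{\ellmat}$ and the symmetry of $T_k^{-1}$ (Lemma~\ref{lemtksymmetry}) makes its largest absolute column sum equal to $\|T_k^{-1}\|_{\ellmat}$, this is $\le4\|T_k^{-1}\|_{\ellmat}$. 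Inserting $\|\cLpkinv_{\co{1}}\|_{\ell^1}=1$ and $\sup_{n>1}\tfrac12\|\cLpkinv_{\co{n}}\|_{\ell^1}\le2\|T_k^{-1}\|_{\ellmat}$ into the norm formula \eqref{eq:Bell1matrix} gives the claim.

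The hard part is the alternating-sign claim for the first column of $T_k^{-1}$: it is exactly this that pins $\delta$ down to $1+2\mu_k\|w\|_{\ellmat}$, which in turn makes $\|\cLpkinv_{\co{1}}\|_{\ell^1}$ equal $1$ exactly and produces the cancellation in the $n\ge2$ estimate. A sign-free lower bound $|\delta|\ge1$ would not suffice, since then $\|\cLpkinv_{\co1}\|_{\ell^1}=(1+2\mu_k\|w\|_{\ellmat})/|\delta|$ could exceed $1$. The remaining steps are routine; the only points needing care are the asymmetric weights in \eqref{eq:vell1}--\eqref{eq:Bell1matrix} and the use of the symmetry of $T_k^{-1}$ to interchange absolute row sums and absolute column sums.
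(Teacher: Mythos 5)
Your proof is correct and follows the same Sherman--Morrison route as the paper: the alternating signs of $(T_k^{-1})_{\co{1}}$ produce the exact cancellation that gives $\|\cLpkinv_{\co{1}}\|_{\ell^1}=1$, and the triangle inequality together with the symmetry of $T_k^{-1}$ (Lemma~\ref{lemtksymmetry}) yields the $2\|T_k^{-1}\|_{\ellmat}$ bound on the remaining columns. You are in fact slightly more careful than the paper's own proof on two points: you explicitly check the Sherman--Morrison hypothesis $1+v^TM_k^{-1}u\neq0$, and you correctly trace the alternating-sign claim for the first column of $T_k^{-1}$ to Theorem~\ref{thm:explicit}, Corollary~\ref{corblocks} and the positivity in Lemma~\ref{lema} (rather than to Lemma~\ref{lemtksymmetry}, which is what the paper cites but which gives only the relation between rows and columns, not the signs along a single column); note only that the diagonal block you should be referring to is the top-left one, i.e.\ the $n=N/2$ block in the paper's indexing.
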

\begin{proof}
We denote 
\begin{equation}
\label{eq:akinv}
[m_{ij}]_{i,j=1}^{N+1} = M_k^{-1} = \left[ \begin{array}{cccc} 1&0&\cdots&0\\-\eigk \left(T_{k}^{-1}\right)_{\co{1}}&\ & T_k^{-1}&\ \end{array}\right].
\end{equation}
We analyze the numerator and denominator of the perturbation term in the explicit form for $\cLpkinv$ \eqref{akuk}. Observe that
\[
M_k^{-1}UM_k^{-1} = M_k^{-1}uv^TM_k^{-1} = 2\begin{bmatrix}m_{11}\\m_{21}\\\vdots\\m_{N+1,1}\end{bmatrix}\cdot\begin{bmatrix} \sum_{j=2}^{N+1}{(-1)^{j+1}m_{j,1}}&\sum_{j=2}^{N+1}{(-1)^{j+1}m_{j,2}}&\dots&\sum_{j=2}^{N+1}{(-1)^{j+1}m_{j,N+1}}\end{bmatrix}
\]
and
\[
1 + v^T M_k^{-1}u = 1 + 2\sum_{j=2}^{N+1}(-1)^{j+1} m_{j,1}.
\]
We will bound $\left\|[\cLpkinv]_{\co{l}}\right\|_{\ell^1}$ by considering two cases: $l=1$, and $l\geq 2$. 

\paragraph{Case $l=1$} it is easy to see a lot of terms in numerator get canceled
\begin{equation}
\label{lbound}
[\cLpkinv]_{\co{1}} = \left( m_{i,1}-2\frac{m_{i,1}\sum_{j=2}^{N+1}{(-1)^{j+1}m_{j,1}}}{1+2\sum_{j=2}^{N+1}{(-1)^{j+1}m_{j,1}}} \right)^{N+1}_{i=1}=
\left( \frac{m_{i,1}}{1+2\sum_{j=2}^{N+1}{(-1)^{j+1}m_{j,1}}} \right)^{N+1}_{i=1},
\end{equation}
where  $m_{1,1}=1$, and for $j>1\colon\,m_{j,1} = -\eigk \left(T_{k}^{-1}\right)_{{j-1},1}$.

Knowing the explicit formulas of elements in $(T_k^{-1})_{\co{1}}$ with precise sign information from Lemma~\ref{lemtksymmetry}, we obtain that the alternating sum from the denominator is in fact the norm
\begin{equation}
\label{eqpossum}
\sum_{j=2}^{N+1}(-1)^{j+1}m_{j1} = \sum_{j=1}^{N}{\left|\mu_k(T_k^{-1})_{j,1}\right|} = |\mu_k|\|(T_k^{-1})_{\co{1}}\|_{\ellmat}.
\end{equation}
Hence it holds that ($m_{1,1}=1$)
\[
\left\| [\cLpkinv]_{\co{1}}\right\|_{\ell^1} = \frac{1 + 2|\eigk|\|(T_k^{-1})_{\co{1}}\|_{\ellmat}}{1 + 2|\eigk|\|(T_k^{-1})_{\co{1}}\|_{\ellmat}} = 1.
\]

\paragraph{Case $l\geq 2$}
\begin{equation}
\label{eqlkinvl3}
[\cLpkinv]_{\co{l}} = \left( m_{i,l}-2\frac{ m_{i,1}\sum_{j=2}^{N+1}{(-1)^{j+1}m_{j,l}}}{1+2\sum_{j=2}^{N+1}{(-1)^{j+1}m_{j,1}}} \right)_{i=1}^{N+1}.
\end{equation}
Taking the absolute value of each entry of the vector \eqref{eqlkinvl3}, applying the triangle inequality, and summing up we obtain the following estimate for all $l\geq 2$
\begin{align}
\label{eq:lkinv_form}
\left\| [\cLpkinv]_{\co{l}} \right\|_{\ell^1} &\leq \frac12|m_{1,l}|+\sum_{j=2}^{N+1}{|m_{j,l}}| + \frac{\left(|m_{1,1}|+2\sum_{j=2}^{N+1}{|m_{j,1}|}\right)\sum_{j=2}^{N+1}{|m_{j,l}|}}{1+2\sum_{j=2}^{N+1}{|m_{j,1}|}}\\
& \leq \sum_{i=1}^{N+1}{|m_{i,l}|} + \frac{\left(1 + 2\sum_{j=2}^{N+1}{|m_{j,1}|}\right)\sum_{j=2}^{N+1}{|m_{j,l}|}}{1+2\sum_{j=2}^{N+1}{|m_{j,1}|}} \nonumber\\
& 
\leq 2\sum_{i=1}^{N+1}{|m_{i,l}|} \leq 2\sum_{i=1}^{N}{|(T_k^{-1})_{i,l-1}|} \leq
2\left\|T_k^{-1}\right\|_{\ellmat}. 
\qedhere
\end{align}
\end{proof}

We have the following straightforward corollary summarizing the results derived in this section.
\begin{corollary} \label{cor:LkinvProj}
Let $T_k$ be the tridiagonal matrix, and $\cLp_k$ be the matrix having the diagonal block structure as in \eqref{plusrone2}, $\|\cdot\|_{B(\ell^1)}$ is defined in \eqref{eq:Bell1matrix}. Then, for all $N>2\mu_k$ (the bounds are uniform with respect to even approximation dimension $N$), 
\begin{equation} \label{eq:upper_uniform_bound_large_k_LNinv}
\|\cLpkinv\|_{B(\ell^1)} \le
C(\mu_k) \bydef
\begin{cases}
1.612 + 0.1, & \text{if } \mu_k \in (10,100]
\\
1.472 + 0.01, & \text{if } \mu_k \in (100,1000]
\\
1.454 + 0.001, & \text{if } \mu_k > 1000.
\end{cases}
\end{equation}
\end{corollary}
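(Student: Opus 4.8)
The plan is to derive this corollary as the arithmetic assembly of the two principal results of this subsection, with no new analysis required. First I would invoke Theorem~\ref{thmmain2}, which states that for every $k$ with $\mu_k \ge 0$ and every $N>0$,
\[
\|\cLpkinv\|_{B(\ell^1)} \le \max\left\{2\left\|T_k^{-1}\right\|_{\ellmat},\ 1\right\}.
\]
Hence it suffices to exhibit an upper bound for $2\|T_k^{-1}\|_{\ellmat}$ that is uniform in the even approximation dimension $N$, and to check that this bound is at least $1$, so that the maximum is realized by the first argument.

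Next I would feed in Theorem~\ref{thm:tkimprov}, which for $N>2\mu_k$ and uniformly in even $N$ gives $\|T_k^{-1}\|_{\ellmat}\le 0.806+\tfrac{1}{20}$ when $\mu_k>10$, $\|T_k^{-1}\|_{\ellmat}\le 0.736+\tfrac{1}{200}$ when $\mu_k>100$, and $\|T_k^{-1}\|_{\ellmat}\le 0.727+\tfrac{1}{2000}$ when $\mu_k>1000$ (the third case of Theorem~\ref{thm:tkimprov}, as printed, must be read with $\mu_k>1000$ since only then is the remainder $\tfrac{1}{2\mu_k}$ controlled by $\tfrac{1}{2000}$). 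Multiplying each estimate by $2$ and restricting the middle and third estimates to the half-open intervals of the statement yields
\[
2\left\|T_k^{-1}\right\|_{\ellmat} \le
\begin{cases}
1.612 + 0.1, & \mu_k \in (10,100],\\
1.472 + 0.01, & \mu_k \in (100,1000],\\
1.454 + 0.001, & \mu_k > 1000,
\end{cases}
\]
which are exactly the constants $C(\mu_k)$ in the statement. Since $2\|T_k^{-1}\|_{\ellmat}>1$ in each range, $\max\{2\|T_k^{-1}\|_{\ellmat},1\}=2\|T_k^{-1}\|_{\ellmat}$, and combining this with the displayed inequality from Theorem~\ref{thmmain2} closes the argument. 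The only care needed here is bookkeeping: aligning the $\mu_k$-ranges of Theorem~\ref{thm:tkimprov} with the intervals of the corollary, and carrying the hypothesis $N>2\mu_k$ together with the uniformity in even $N$ through verbatim.

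There is no genuine obstacle at this step — it is a one-line consequence of work already done. The actual difficulty lies upstream, in the results this corollary consumes: the Schur-complement / Banachiewicz block-inversion identity (Lemma~\ref{lemblock}), the monotonicity and a priori bounds on the recursive scalar sequences $a_j,\hat a_j,b_j$ (Lemmas~\ref{lemmono}--\ref{lema2}) feeding the explicit column formulas for $T_k^{-1}$ (Theorem~\ref{thm:explicit}, Lemma~\ref{lem:tkinvlowertriang}); the Sherman--Morrison reduction that collapses $\|\cLpkinv\|_{B(\ell^1)}$ to twice the tridiagonal norm (Theorem~\ref{thmmain2}); and the tighter integral estimates of Theorem~\ref{thm:tkimprov} that sharpen the crude bound $2\sqrt{2}+\tfrac{1}{2\mu_k}$ of Theorem~\ref{thm:trivialTkinvbd} down to the numerical constants used above. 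Once those are in hand, the corollary follows immediately.
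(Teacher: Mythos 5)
Your proposal is correct and matches what the paper intends: the corollary is stated with no explicit proof (introduced only as a "straightforward corollary summarizing the results derived in this section"), and the assembly you describe — combining Theorem~\ref{thmmain2} with Theorem~\ref{thm:tkimprov}, doubling the constants, and noting the maximum is realized by $2\|T_k^{-1}\|_{\ellmat}>1$ — is exactly the intended argument. You also correctly spotted that the third case of Theorem~\ref{thm:tkimprov} as printed should read $\mu_k>1000$.
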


We proceed in bounding the full infinite dimensional inverse linear operator $\cL_k^{-1}$, our main tool is summarized in the following theorem.
\begin{theorem}
\label{thm:cauchy}
Let $\mu_k>10$. Let $T_k$ be the tridiagonal operator, and $\cL_k$ be the (infinite dimensional) operator. Let $A_k$ be the approximate inverse (infinite dimensional), where its block dimensions depend on $N$, defined as 
\[
A_k \bydef \left[\def\arraystretch{1.25}\begin{array}{c|c}
\cLpkinv & -[\cLpkinv]_{\co{1}}v\Omega^{-1}\\ \hline
0 & \Omega^{-1}
\end{array}\right].
\]

For any $\varepsilon>0$ and $\mu_k > 10$ there exists $N(\varepsilon,\mu_k)$, such that 
\[
\|I-A_k\cL_k\|_{B(\ell^1)} \leq \varepsilon,\text{ for all }N>N(\varepsilon,\mu_k)\text{ and }N\text{ even,}
\]
consequently
\[
\|\cL_{k}^{-1}\|_{B(\ell^1)} \leq \frac{C(\mu_k)}{1-\varepsilon}\text{, and }A_k\to \cL_k^{-1}\text{ as }\frac{N}{2}\to\infty,
\]
where $C(\mu_k)$ is the constant from Corollary~\ref{cor:LkinvProj} given in \eqref{eq:upper_uniform_bound_large_k_LNinv}.
\end{theorem}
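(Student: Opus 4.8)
The plan is to run the Neumann-series argument of Lemma~\ref{lem:bounding_invLk_small_k}, now with the exact finite inverse $\cLpkinv$ in the role played there by the approximate inverse, and with the Chebyshev projection dimension $N$ allowed to grow. The operator $A_k$ in the statement has precisely the block structure of the approximate inverse drawn in Figure~\ref{fig:operators_cL_kand_A_k} with $(\cL_k^{(N)})^{-1}$ replaced by $\cLpkinv$, and since $\cL_k^{(N)}=\cLp_k$ this is an exact replacement. Hence the purely algebraic identity for $I-A_k\cL_k$ obtained in the proof of Lemma~\ref{lem:bounding_invLk_small_k} (Figure~\ref{fig:operator_I_minus_AL}) carries over verbatim: the first $N$ columns of $I-A_k\cL_k$ vanish and $\|I-A_k\cL_k\|_{B(\ell^1)}=\tfrac{|\mu_k|}{2}\max\{\rho^{(1)},\rho^{(2)},\rho^{(3)}\}$, where $\rho^{(1)},\rho^{(2)},\rho^{(3)}$ are defined by \eqref{eq:rho1}--\eqref{eq:rho3} now reading $(\cL_k^{(N)})^{-1}=\cLpkinv$. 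The whole statement then reduces to showing that this quantity tends to $0$ as $N\to\infty$ (with $\mu_k$ fixed). Indeed, given $\varepsilon>0$, once $N$ is large enough that the quantity is $\le\varepsilon<1$ and $N>2\mu_k$, the matrix $\cLp_k$ (hence $A_k$, block upper triangular with invertible diagonal blocks $\cLpkinv$ and $\Omega^{-1}$) is invertible because $1+v^TM_k^{-1}u=1+2\mu_k\|(T_k^{-1})_{\co{1}}\|_{\ellmat}\ge1$; the Neumann series $(A_k\cL_k)^{-1}=\sum_{j\ge0}(I-A_k\cL_k)^j$ and the identity $\cL_k^{-1}=\big(\sum_{j\ge0}(I-A_k\cL_k)^j\big)A_k$, together with $\|A_k\|_{B(\ell^1)}\le C(\mu_k)$ (from Corollary~\ref{cor:LkinvProj}, the tail columns of $A_k$ contributing only $O(1/N)$), then yield $\|\cL_k^{-1}\|_{B(\ell^1)}\le C(\mu_k)/(1-\varepsilon)$ and, via $\|\cL_k^{-1}-A_k\|_{B(\ell^1)}\le\tfrac{\varepsilon}{1-\varepsilon}\|A_k\|_{B(\ell^1)}$, the convergence $A_k\to\cL_k^{-1}$.

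To establish the decay I would first use the two facts about $\cLpkinv$ proved inside Theorem~\ref{thmmain2}. The case $l=1$ there gives $\|[\cLpkinv]_{\co{1}}\|_{\ell^1}=1$, and substituting this into \eqref{eq:rho1} and \eqref{eq:rho3} gives $\rho^{(1)}=\tfrac{2}{N+1}$ and $\rho^{(3)}=\tfrac{2}{(N+1)(N+3)}+\tfrac1{N+1}+\tfrac1{N+3}$, both $O(1/N)$; it also simplifies \eqref{eq:rho2} to $\rho^{(2)}\le\|[\cLpkinv]_{\co{N+1}}\|_{\ell^1}+\tfrac{2}{N+2}$. Finally, the estimate \eqref{eq:lkinv_form} from the proof of Theorem~\ref{thmmain2} gives $\|[\cLpkinv]_{\co{N+1}}\|_{\ell^1}\le2\|(T_k^{-1})_{\co{N}}\|_{\ellmat}$. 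So everything comes down to proving
\[
\|(T_k^{-1})_{\co{N}}\|_{\ellmat}=O(1/N)\qquad(N\to\infty,\ \mu_k\ \text{fixed}),
\]
since then $\tfrac{|\mu_k|}{2}\max\{\rho^{(1)},\rho^{(2)},\rho^{(3)}\}=O(|\mu_k|/N)\to 0$ with $\mu_k$ held fixed.

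This last decay is the crux, and the main obstacle, because the available estimates (Lemma~\ref{lem:tkinvelement}, Lemma~\ref{lem:tkinvsplit}) only give $\|(T_k^{-1})_{\co{N}}\|_{\ellmat}=O(1/\mu_k)$, which is bounded but not $o(1)$ in $N$. To gain the extra decay I would use the explicit column formula, Lemma~\ref{lem:tkinvlowertriang} combined with Lemma~\ref{lem:tkinvuppertriang2}, specialised to $n=1$, $c=2$:
\[
\|(T_k^{-1})_{\co{N}}\|_{\ellmat}\le\|\widetilde{I}_{\co{2}}\|_{\ellmat}+\sum_{i=1}^{N/2-1}\mu_k^{\,i}\big(a_{1+i}+\hat{a}_{1+i}\big)\Big(\prod_{p=1}^{i-1}\hat{a}_{1+p}\Big)|\widetilde{I}_{22}|,
\]
where $\widetilde{I}$ is the bottom-right $2\times2$ block \eqref{eq:I}, whose diagonal entries $d_{N-1}+\mu_k^2 b_{\frac{N-2}{2}}$ and $d_N=2N$ are $\Theta(N)$ (here $a_0=0$), so by Lemma~\ref{lema} its determinant is $\Theta(N^2)$ and $\|\widetilde{I}_{\co{2}}\|_{\ellmat},\,|\widetilde{I}_{22}|$ are $O(1/N)$. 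For the sum I would split at a threshold $i\asymp N/4$: for $i\le N/4$ every factor $\hat{a}_{1+p}$ with $p\le i-1$ involves diagonal entries $d_{N-2p-1},d_{N-2p}\ge N-2$, hence $\hat{a}_{1+p}=O(\mu_k/N^2)$, so using $a_j+\hat{a}_j\le(1+\sqrt2)/\mu_k$ (Lemma~\ref{lema}) the $i$-th summand is $O\big((\mu_k^2/N^2)^{i-1}\big)|\widetilde{I}_{22}|$, a geometric series of total size $O(1/N)$; for $i>N/4$ the product $\prod_{p=1}^{i-1}\hat{a}_{1+p}$ already contains at least $N/4$ factors of size $O(\mu_k/N^2)$, which beats the prefactor $\mu_k^{i}\le\mu_k^{N/2}$ and makes each of the at most $N/2$ remaining summands $O\big((C\mu_k^2/N^2)^{N/4}\big)|\widetilde{I}_{22}|=o(1/N)$. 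Summing these bounds gives $\|(T_k^{-1})_{\co{N}}\|_{\ellmat}=O(1/N)$ and closes the argument. Apart from this splitting, the only remaining work is bookkeeping: fixing the constants in the two regimes, checking that the $i\le N/4$ range uses only diagonal entries $\ge N-2$ so that the bounds of Lemma~\ref{lema} and Lemma~\ref{lema2} apply, and verifying that the threshold is compatible with $N>2\mu_k$.
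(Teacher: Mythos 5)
Your proposal follows the same route as the paper: apply Lemma~\ref{lem:bounding_invLk_small_k} with $A_k$ built from the exact finite inverse $\cLpkinv$, use $\|[\cLpkinv]_{\co{1}}\|_{\ell^1}=1$ (from the $l=1$ case of Theorem~\ref{thmmain2}) to collapse $\rho^{(1)}$ and $\rho^{(3)}$ to $O(1/N)$, pass to $\rho^{(2)}\le\|[\cLpkinv]_{\co{N+1}}\|_{\ell^1}+\tfrac{2}{N+2}$ and then to $\|[\cLpkinv]_{\co{N+1}}\|_{\ell^1}\le2\|(T_k^{-1})_{\co{N}}\|_{\ellmat}$ via \eqref{eq:lkinv_form}, and finally show that last column norm decays as $N\to\infty$ with $\mu_k$ fixed. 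The convergence $A_k\to\cL_k^{-1}$ and the final bound $\|\cL_k^{-1}\|_{B(\ell^1)}\le C(\mu_k)/(1-\varepsilon)$ then follow exactly as you say. The only divergence from the paper is in the bookkeeping for $\|(T_k^{-1})_{\co{N}}\|_{\ellmat}$: you split the sum from Lemma~\ref{lem:tkinvuppertriang2} at the threshold $i\asymp N/4$ and argue a geometric series times $|\widetilde{I}_{22}|=O(1/N)$ plus a super-exponentially small remainder, whereas the paper parameterizes $N=2^p\mu_k+1$, invokes Lemma~\ref{lema2} at level $p$, and splits at $i=\tfrac{N}{2}-\lfloor\mu_k\rfloor$ (rows $1:2\lfloor\mu_k\rfloor$ versus the rest), tracking everything in powers of $2^{-p}$. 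Both rely on the same underlying facts---the recursive column formulas of Theorem~\ref{thm:explicit} together with the $a_j,\hat a_j$ bounds of Lemmas~\ref{lema} and~\ref{lema2}---and either bookkeeping closes the argument; the paper's choice of $2^p$ as the running parameter is what makes its constants fully explicit, which your $O$-notation version would need if you wanted quantitative $N(\varepsilon,\mu_k)$.
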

\begin{proof}
We apply Lemma~\ref{lem:bounding_invLk_small_k}. Our goal is to show that $\rho_k<1$, then from Lemma~\ref{lem:bounding_invLk_small_k} $\cL_k$ is a boundedly invertible operator on $\ell^1$ with
\[
\| \cL_k^{-1} \|_{B(\ell^1)} \le \frac{\beta_k}{1-\rho_k},
\]
where $\rho_k \bydef \frac{\mu_k}{2} \max \{ \rho^{(1)}, \rho^{(2)}, \rho^{(3)} \}$, where $\rho^{(1)}, \rho^{(2)}, \rho^{(3)}$ are given by \eqref{eq:rho1}, \eqref{eq:rho2},  \eqref{eq:rho3}, and $\beta_k=\|\cLpkinv\|_{B(\ell^1)}$. In Theorem~\ref{thmmain2} we show that $\|[\cLpkinv]_{\co{1}}\|_{\ell^1}$ is equal to $1$ for all $N>0$ and $\mu_k>0$. Hence it holds that 
\[
    \rho^{(1)} \le \frac{2}{N+1},\quad \rho^{(3)} \le \frac{3}{N+1}.
\]
It reminds us to estimate $\rho^{(2)}$. We proceed using the triangle inequality. 
\[
\rho^{(2)}\leq \|[\cLpkinv]_{\co{N+1}}\|_{\ell^1} + \frac{1}{N+2}\|[\cLpkinv]_{\co{1}}\|_{\ell^1} + \frac{1}{N+2}
\le \|[\cLpkinv]_{\co{N+1}}\|_{\ell^1} + \frac{2}{N+2}.
\]

Let us fix $0<\varepsilon < 1$. Obviously $\frac{\mu_k}{2}\rho^{(1)}, \frac{\mu_k}{2}\rho^{(3)}$ are smaller than $\varepsilon$ for $N>N(\varepsilon,\mu_k)$. Where $N(\varepsilon,\mu_k)=2^p\mu_k+1$ with $p$ sufficiently large to be adjusted later-on. 

We now show that $\frac{\mu_k}{2}\rho^{(2)}\leq\frac{\mu_k}{2}\|[\cLpkinv]_{\co{N+1}}\|_{\ell^1} + \frac{\mu_k}{N} < \varepsilon$ for all $N$ s.t. $N> N(\varepsilon,\mu_k)$ and $N$ even.

From \eqref{eq:lkinv_form} in the proof of Theorem~\ref{thmmain2} it follows that $\|[\cLpkinv]_{\co{N+1}}\|_{\ell^1}\leq 2\|(T_k^{-1})_{\co{N}}\|_1$. The following upper bounds hold for the right-bottom corner diagonal block, corresponding to the last column of $T_k^{-1}$ (compare \eqref{eq:eqIbds})
\begin{align}
    \left|\widetilde{I}_{11}\right| &= \frac{d_{N} }{D}< a_{1},\\
    \left|\widetilde{I}_{12}\right|, \left|\widetilde{I}_{21}\right| &= \frac{\mu_k}{D}  < \hat{a}_{1},\label{eq:i12Nbd}\\
    \left|\widetilde{I}_{22}\right| &= \frac{d_{N-1} +\mu_k^2b_{(N-2)/2}}{D}< b_{N/2},\label{eq:i22Nbd}
\end{align}
where $D$ is the determinant of $\widetilde{I}$. From Lemma~\ref{lema2} it follows that for $N\geq N(\varepsilon,\mu_k)=2^p\mu_k+1$ and $N$ even it holds that (setting $j=1$)
\begin{align}
    b_{N/2} &\leq \frac{1}{2^{p+1}\mu_k},\label{eq:bN2}\\
    \hat{a}_1 &\leq \frac{1}{(2^{2p+2} +1)\mu_k},\label{eq:hata1}
\end{align}
hence 
\begin{equation}
\label{aaa:sum1}
\|\widetilde{I}_{\co{1}}\|_{1}=|\widetilde{I}_{12}|+|\widetilde{I}_{12}|<\hat{a}_1+b_{N/2}<\frac{1}{2^{p}\mu_k}.
\end{equation}

The sum of absolute values of $T_k^{-1}$ upper triangle from Lemma~\ref{lem:tkinvuppertriang2} is bounded by (setting $n=1$)
\[
\left\|\left(T^{-1}_k\right)_{\ro{1:N-2},\co{N}}\right\|_{\ellmat} \leq  \sum_{i=1}^{\frac{N}{2}-1}\mu_k^{ i } 
\left(a_{1+i} + \hat{a}_{1+i}\right)\prod_{p = 1}^{i-1}{\hat{a}_{1+p}|\widetilde{I}_{22}|}.
\]
We split the bound using $\left\|\left(T^{-1}_k\right)_{\ro{1:N-2},\co{N}}\right\|_{\ellmat}\leq \left\|\left(T^{-1}_k\right)_{\ro{1:2\lfloor\mu_k\rfloor},\co{N}}\right\|_{\ellmat}+
\left\|\left(T^{-1}_k\right)_{\ro{2\lfloor\mu_k\rfloor+1:N-2},\co{N}}\right\|_{\ellmat}$. For the first term we have
\begin{equation}
\label{aaa:sum2}
\left\|\left(T^{-1}_k\right)_{\ro{1:2\lfloor\mu_k\rfloor},\co{N}}\right\|_{\ellmat} \leq  \sum_{i=\frac{N}{2}-\lfloor\mu_k\rfloor}^{\frac{N}{2}-1}\mu_k^{ i }
\left(a_{1+i} + \hat{a}_{1+i}\right)\prod_{p = 1}^{i-1}{\hat{a}_{1+p}|\widetilde{I}_{22}|} < \frac{\sqrt{2}+1}{2^{p+1}}
,
\end{equation}
using the upper bounds from Lemma~\ref{lema}, \eqref{eq:i22Nbd}, \eqref{eq:bN2}. And as for the second term, it holds that (analogously to \eqref{eq:bounduppertriangleterms})
\begin{equation}
\label{aaa:sum3}
\left\|\left(T^{-1}_k\right)_{\ro{2\lfloor\mu_k\rfloor+1:N-2},\co{N}}\right\|_{\ellmat} \leq  \sum_{i=1}^{\frac{N}{2}-\lfloor\mu_k\rfloor}\mu_k^{ i } 
\left(a_{1+i} + \hat{a}_{1+i}\right)\prod_{p = 1}^{i-1}{\hat{a}_{1+p}|\widetilde{I}_{22}|} < \frac{6}{2^{p+1}\mu_k}\left( \sum_{i=1}^{\frac{N}{2}-\lfloor\mu_k\rfloor}{\frac{1}{17^i}}\right) < \frac{3}{2^{p+4}\mu_k}.
\end{equation}
Finally summing up \eqref{aaa:sum1}, \eqref{aaa:sum2}, and \eqref{aaa:sum3}
\[
2\|(T_k^{-1})_{\co{N}}\|_1 < \frac{2}{2^{p}\mu_k}+ \frac{2\sqrt{2}+2}{2^{p+1}} + \frac{6}{2^{p+4}\mu_k} < \frac{1}{2^{p}}\left(\frac{2}{\mu_k}+\sqrt{2}+1+\frac{3}{8\mu_k}\right).
\]
Now it reminds to pick $p$ in $N(\varepsilon,\mu_k)=2^p\mu_k+1$, such that $\frac{\mu_k}{2^{p+1}}(\frac{2}{\mu_k}+\sqrt{2}+1+\frac{3}{8\mu_k}) + \frac{1}{2^{p}}< \varepsilon$ and $\frac{1}{2^{p+1}} < \varepsilon$.
\end{proof}
We finally obtain the ultimate bound for $\|\cL_k^{-1}\|_{B(\ell^1)}$ by passing to the limit with $\varepsilon\to 0$ in Theorem~\ref{thm:cauchy}.

\begin{corollary}
\label{cor:Lkinv}
Let $\cL_k$ be the linear operator \eqref{eq:linear_operators} and recall the definition of $C(\mu_k)$ in \eqref{eq:upper_uniform_bound_large_k_LNinv}. Then, 
\[
\|\cL_k^{-1}\|_{B(\ell^1)}
\le C(\mu_k) =
\begin{cases}
1.612 + 0.1,& \text{ for } \mu_k > 10
\\
1.472 + 0.01,& \text{ for } \mu_k > 100
\\
1.454 + 0.001,& \text{ for } \mu_k >1000.
\end{cases}
\]
\end{corollary}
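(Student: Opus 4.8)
The plan is to read off Corollary~\ref{cor:Lkinv} from Theorem~\ref{thm:cauchy} by letting the auxiliary parameter $\varepsilon$ tend to $0$. Fix a Fourier index $k$ with $\mu_k>10$. For every $\varepsilon\in(0,1)$, Theorem~\ref{thm:cauchy} asserts that $\cL_k$ is boundedly invertible on $\ell^1$ and that $\|\cL_k^{-1}\|_{B(\ell^1)}\le C(\mu_k)/(1-\varepsilon)$, with $C(\mu_k)$ the constant of \eqref{eq:upper_uniform_bound_large_k_LNinv}; this is where all the analytic work has been spent --- the uniform tridiagonal estimate of Theorem~\ref{thm:tkimprov}, Corollary~\ref{cor:LkinvProj} for the finite operator $\cLp_k$ obtained through the Sherman--Morrison identity of Corollary~\ref{lkcor}, and the Neumann-series argument of Lemma~\ref{lem:bounding_invLk_small_k}. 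The key observation is that the truncation dimension $N$ appearing inside Theorem~\ref{thm:cauchy} depends on $\varepsilon$, but $\cL_k$ and its inverse are intrinsic objects independent of $N$ and of $\varepsilon$.

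First I would record that the inequality $\|\cL_k^{-1}\|_{B(\ell^1)}\le C(\mu_k)/(1-\varepsilon)$ holds for \emph{every} $\varepsilon\in(0,1)$ while its left-hand side is a fixed number. Then, passing to the limit $\varepsilon\to0^+$ (equivalently, taking the infimum over $\varepsilon\in(0,1)$), the right-hand side decreases to $C(\mu_k)$, which yields $\|\cL_k^{-1}\|_{B(\ell^1)}\le C(\mu_k)$. Unpacking the piecewise definition \eqref{eq:upper_uniform_bound_large_k_LNinv} of $C(\mu_k)$ over the ranges $\mu_k\in(10,100]$, $\mu_k\in(100,1000]$ and $\mu_k>1000$ then reproduces exactly the three cases in the statement; when $\mu_k$ lies above more than one of the listed thresholds, the sharpest applicable bound is the operative one.

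I do not anticipate a genuine obstacle: this is precisely the ``passing to the limit $\varepsilon\to0$'' announced just before the corollary, and it reduces to a one-line deduction once Theorem~\ref{thm:cauchy} is available. The only point meriting a moment of care --- and already dispatched inside the proof of Theorem~\ref{thm:cauchy} --- is to confirm that the constant $\beta_k$ of \eqref{eq:beta_norm_A} is genuinely bounded by $C(\mu_k)$ rather than by a marginally larger quantity: its first slot is $\|\cLpkinv\|_{B(\ell^1)}\le C(\mu_k)$ by Corollary~\ref{cor:LkinvProj}, while its second slot equals $\tfrac{1}{N+1}$, using the identity $\|[\cLpkinv]_{\co{1}}\|_{\ell^1}=1$ established in the proof of Theorem~\ref{thmmain2}, and is therefore $\le 1\le C(\mu_k)$ for the large $N$ supplied by Theorem~\ref{thm:cauchy}.
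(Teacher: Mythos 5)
Your proposal is correct and follows exactly the route the paper takes: the corollary is obtained from Theorem~\ref{thm:cauchy} by letting $\varepsilon\to 0^+$ (the paper announces this explicitly just before the corollary), using that $\|\cL_k^{-1}\|_{B(\ell^1)}$ is a fixed number independent of the truncation dimension $N$ and of $\varepsilon$. Your side remark checking that the second slot in $\beta_k$ from \eqref{eq:beta_norm_A} reduces to $\tfrac{1}{N+1}\le C(\mu_k)$ is a small but welcome clarification of a point the paper's proof of Theorem~\ref{thm:cauchy} handles somewhat tersely by simply setting $\beta_k=\|\cLpkinv\|_{B(\ell^1)}$.
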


\subsection{Bound for \boldmath$\|\cL^{-1}\|_{B(X_{\nu,1})}$\unboldmath} \label{sec:bound_cL_inv}


Recall from Lemma~\ref{lem:bounding_invLk_small_k} the definitions of $\rho_k$ and $\beta_k$ given in \eqref{eq:rho_k} and \eqref{eq:beta_norm_A}, respectively. Assume that we have computed the bounds $\| \cL_k^{-1}\|_{B(\ell^1)} \le \frac{\beta_k}{1-\rho_k}$ for $k=0,\dots,\hat k$ using the computer-assisted approach of Section~\ref{sec:bounds_cL_k_inv_small_k}. Here, $\hat k$ is chosen so that $\mu_k \ge 0$ for all $k > \hat k$.

Combining Corollary~\ref{cor:explicit_bound_norm_inv_small_mu} and
Corollary~\ref{cor:Lkinv}, we obtain that 
\[
\| \cL_k^{-1} \|_{B(\ell^1)} \le 1.455, \quad \text{for all } \mu_k\ \ge 0. 
\]
Hence, denote by $\tilde \delta = 1.455$. 
Letting
\begin{equation} \label{eq:delta}
\delta \bydef \max \left(
\max_{k=0,\dots,\hat k} \frac{\beta_k}{1-\rho_k}
~,~
 \tilde \delta
 \right),
 \end{equation}
we get the following result.
\begin{lemma} \label{lem:final_bound_cL_inv}
\begin{equation} \label{eq:cL_inverse}
\| \cL^{-1} \|_{X_{\nu,1}} \le \delta.
\end{equation}
\end{lemma}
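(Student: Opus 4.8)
The plan is to exploit the block-diagonal structure of $\cL$ together with the weighted decomposition of the norm $\|a\|_{X_{\nu,1}} = \|a_0\|_{\ell^1} + 2\sum_{k\ge 1}\|a_k\|_{\ell^1}\nu^k$. Since $\cL a = (\cL_0 a_0, \cL_1 a_1, \dots)$ acts independently on each Fourier block $a_k$, and each $\cL_k$ has been shown to be boundedly invertible on $\ell^1$ (by Lemma~\ref{lem:bounding_invLk_small_k} together with Corollary~\ref{cor:explicit_bound_norm_inv_small_mu} for the finitely many small indices $k=0,\dots,\hat k$, and by Corollary~\ref{cor:Lkinv} for $k>\hat k$), the formal inverse is the block-diagonal operator $\cL^{-1} = (\cL_0^{-1}, \cL_1^{-1}, \dots)$. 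First I would verify that this formal block-diagonal operator is genuinely a bounded operator on $X_{\nu,1}$, which is where the uniform bound $\sup_k \|\cL_k^{-1}\|_{B(\ell^1)} \le \delta$ enters.

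Concretely, given $y = (y_0, y_1, \dots) \in X_{\nu,1}$, define $a_k \bydef \cL_k^{-1} y_k$ for each $k$. Then
\[
\|a\|_{X_{\nu,1}} = \|\cL_0^{-1} y_0\|_{\ell^1} + 2\sum_{k\ge 1}\|\cL_k^{-1} y_k\|_{\ell^1}\nu^k
\le \|\cL_0^{-1}\|_{B(\ell^1)}\|y_0\|_{\ell^1} + 2\sum_{k\ge 1}\|\cL_k^{-1}\|_{B(\ell^1)}\|y_k\|_{\ell^1}\nu^k.
\]
Bounding each $\|\cL_k^{-1}\|_{B(\ell^1)}$ by $\delta$ (which is legitimate since $\delta$ was defined in \eqref{eq:delta} precisely as the maximum of the bounds $\frac{\beta_k}{1-\rho_k}$ for $k\le \hat k$ and of $\tilde\delta = 1.455$, and the latter dominates $\|\cL_k^{-1}\|_{B(\ell^1)}$ for all $k > \hat k$ thanks to Corollary~\ref{cor:explicit_bound_norm_inv_small_mu} and Corollary~\ref{cor:Lkinv}), we get
\[
\|a\|_{X_{\nu,1}} \le \delta\left(\|y_0\|_{\ell^1} + 2\sum_{k\ge 1}\|y_k\|_{\ell^1}\nu^k\right) = \delta\|y\|_{X_{\nu,1}}.
\]
In particular $a \in X_{\nu,1}$, so the block-diagonal operator $a \mapsto a$ is well defined and bounded with norm at most $\delta$. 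One then checks the algebraic identities $\cL a = y$ and, for $a \in X_{\nu,1}$, that applying the block-diagonal operator to $\cL a$ recovers $a$; both follow componentwise from the fact that each $\cL_k$ is invertible on $\ell^1$. This shows $\cL$ is invertible on $X_{\nu,1}$ with $\cL^{-1}$ equal to the block-diagonal operator and $\|\cL^{-1}\|_{B(X_{\nu,1})} \le \delta$.

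The main obstacle — really the only subtle point — is justifying that the uniform scalar bound $\delta$ on $\|\cL_k^{-1}\|_{B(\ell^1)}$ transfers to a bound on the norm of the assembled operator on the weighted $\ell^1$-of-$\ell^1$ space $X_{\nu,1}$, i.e. that no loss occurs from the nesting of the two $\ell^1$ norms. This works because the $X_{\nu,1}$ norm is an (unweighted-in-structure) $\ell^1$-combination of the block norms $\|a_k\|_{\ell^1}$ with fixed positive weights $\nu^k$ that are untouched by $\cL^{-1}$, so the $\sup_k$ of the block operator norms is exactly the operator norm of the diagonal assembly; the weights pass through transparently. Care is only needed to handle the $k=0$ block separately (its weight is $1$, not $2\nu^0$), but this is absorbed in the same estimate. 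Everything else — the invertibility of $\cL$, the explicit constants — has already been established upstream, so the proof reduces to this one line of bookkeeping.
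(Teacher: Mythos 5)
Your proof is correct and takes essentially the same approach as the paper: exploit the block-diagonal structure of $\cL$, decompose the $X_{\nu,1}$ norm into weighted $\ell^1$-block norms, and bound each $\|\cL_k^{-1}\|_{B(\ell^1)}$ by the uniform constant $\delta$. The paper's proof simply estimates $\|\cL^{-1}a\|_{X_{\nu,1}}$ for $\|a\|_{X_{\nu,1}}\le 1$ block by block, taking the block-diagonal form of $\cL^{-1}$ for granted, whereas you additionally verify that the formal block-diagonal assembly is well-defined and genuinely equals $\cL^{-1}$; this extra care is sound and not a deviation.
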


\begin{proof}
Letting $a \in X_{\nu,1}$ such that $\|a\|_{X_{\nu,1}} \le 1$, we get that
\begin{align*}
\| \cL^{-1} a \|_{X_{\nu,1}}  &= 
 \| \cL_{0}^{-1} a_0\|_{\ell^1} + 2 \sum_{k \ge 1} \| \cL_{k}^{-1} a_k \|_{\ell^1} \nu^k
\\
&\le 
 \| \cL_{0}^{-1} \|_{B(\ell^1)} \| a_0\|_{\ell^1} + 2 \sum_{k \ge 1}  \| \cL_{k}^{-1} \|_{B(\ell^1)} \| a_k \|_{\ell^1} \nu^k
\\
& \le \max \left(
\max_{k=0,\dots,\hat k} \| \cL_k^{-1} \|_{B(\ell^1)}
~,~
\tilde \delta
 \right) \left( \| a_0\|_{\ell^1} + 2 \sum_{k \ge 1} \|a_k \|_{\ell^1} \nu^k \right) \\
& \le  \delta \| h \|_{X_{\nu,1}} \le \delta. \qedhere
\end{align*}
\end{proof}

\section{Bounds for the radii polynomial} \label{sec:radii_polynomial_bounds}

Recall that the hypothesis of Theorem~\ref{thm:radPolyBanach} is verified using the radii polynomial $p(r)$ defined in \eqref{eq:radPolyBanach}. In this section, we present a constructive way to compute the bounds required to define $p(r)$, namely the bounds $Y$ and $Z=Z(r)$ given by \eqref{eq:Y_radPolyBanach} and \eqref{eq:Z_radPolyBanach}, respectively. 

Assume that the initial condition is given as
\[
b \in B_{r_0}(\bar b) = \left\{ b \in \ell_{\nu}^1 : \| b - \bb \|_{\ell_\nu^1} \le r_0 \right\},
\]
given some $\nu \ge 1$ and $r_0$ the error bound, and where the numerical initial condition  $\bb$ has only finitely many nonzero terms. Typically the error bound $r_0$ will come from the radius of the radii polynomial from the previous rigorous integration step. In Section~\ref{sec:initial_condition_for_next_step}, we show how this bound can be obtained. Here, it is understood that for $b = (b_k)_{k\ge 0}$, 
\[
\| b \|_{\ell_\nu^1} =  |b_0| + 2 \sum_{k \ge 1} |b_k| \nu^k.
\]
In this section, we use the notation $F(a,b)$ instead of $F(a)$ to emphasize the dependency of the zero finding problem $F=0$ (See \eqref{eq:F(a)=0}) in the initial condition $b \in \ell_\nu^1$.

Recall from Lemma~\ref{lem:bounding_invLk_small_k} the definitions of $\rho_k$ and $\beta_k$ given in \eqref{eq:rho_k} and \eqref{eq:beta_norm_A}, respectively. Assume that we have computed the bounds $\| \cL_k^{-1}\|_{B(\ell^1)} \le \frac{\beta_k}{1-\rho_k}$ for $k=0,\dots,\hat k$ using the computer-assisted approach of Section~\ref{sec:bounds_cL_k_inv_small_k}.
Denote by $\tilde \delta = 1.455$ the uniform bounds for $\| \cL_k^{-1}\|_{B(\ell^1)}$ for $k > \hat k$ from Section~\ref{sec:uniform_bounds_large_k}. 

We now present a strategy to compute each bound separately.

\subsection{The bound \boldmath$Y$\unboldmath} \label{sec:Y_general}

Recalling from \eqref{eq:T(a)} that $\cT(a) \bydef a - \cL^{-1} F(a,b) = \cL^{-1} \left( b - \frac{h}{2}  \bL \cQ(a) \right)$, note that $Y$ satisfies
\[
\| \cT(\ba) - \ba \|_{X_{\nu,1}} =
\| \cL^{-1} F(\ba,b)\|_{X_{\nu,1}} \le Y.
\]
Since the nonlinearity of the PDE is a polynomial and $\ba$ has only finitely many nonzero entries, there exists $M > \hat k$ such that $F_k(\ba,\bb)=0$ for all $k > M$.
Hence, 
\begin{align*}
\| \cL^{-1} F(\ba,\bb) \|_{X_{\nu,1}} &=  \|\cL_{0}^{-1} F_{0}(\ba,\bb)\|_{\ell^1} + 2 \sum_{k = 1}^{\hat k} \|\cL_{k}^{-1} F_{k}(\ba,\bb)\|_{\ell^1} \nu^k
+ 2 \sum_{k = \hat k+1}^{M} \|\cL_{k}^{-1} F_{k}(\ba,\bb)\|_{\ell^1} \nu^k
\\
& \le  \|\cL_{0}^{-1} F_{0}(\ba,\bb)\|_{\ell^1} + 2 \sum_{k = 1}^{\hat k} \|\cL_{k}^{-1} F_{k}(\ba,\bb)\|_{\ell^1} \nu^k
+ 2 \tilde \delta \sum_{k = \hat k+1}^{M} \| F_{k}(\ba,\bb)\|_{\ell^1} \nu^k.
\end{align*}
To compute a bound for $\| \cL_{k}^{-1} F_{k}(\ba,\bb)\|_{\ell^1}$ for $k=0,\dots,\hat k$, recall the definition of the approximate inverse $A_k$ in Figure~\ref{fig:operators_cL_kand_A_k} and recall \eqref{eq:cL_k_inv_Neumann} in the proof of Lemma~\ref{lem:bounding_invLk_small_k}, that is
\[
\cL_k^{-1} = \left( \sum_{j \ge 0} ( I - A_k \cL_k)^j \right) A_k.
\]
Hence,
we get that
\[
\| \cL_{k}^{-1} F_{k}(\ba,\bb)\|_{\ell^1}
= \left\|  \left( \sum_{j \ge 0} ( I - A_k \cL_k)^j \right) A_k F_{k}(\ba,\bb) \right\|_{\ell^1} \le \frac{1}{1-\rho_k} \| A_k F_{k}(\ba,\bb) \|_{\ell^1}.
\]
Hence, we can compute $Y_0$ such that
\begin{align*}
\| \cL^{-1} F(\ba,\bb) \|_{X_{\nu,1}} & \le \|\cL_{0}^{-1} F_{0}(\ba,\bb)\|_{\ell^1} + 2 \sum_{k = 1}^{\hat k} \|\cL_{k}^{-1} F_{k}(\ba,\bb)\|_{\ell^1} \nu^k
+ 2 \tilde \delta \sum_{k = \hat k+1}^{M} \| F_{k}(\ba,\bb)\|_{\ell^1} \nu^k \\
& \le \frac{\| A_0 F_{0}(\ba,\bb) \|_{\ell^1}}{1-\rho_0}  + 2 \sum_{k = 1}^{\hat k} \frac{\| A_k F_{k}(\ba,\bb) \|_{\ell^1} }{1-\rho_k} \nu^k
+ 2 \tilde \delta \sum_{k = \hat k+1}^{M} \| F_{k}(\ba,\bb)\|_{\ell^1} \nu^k \\
& \le Y_0.
\end{align*}

Moreover, recalling the definition of $\delta$ in \eqref{eq:delta}, note that
\[
\left\| \cL^{-1} (b-\bb)\right\|_{X_{\nu,1}}  \le \| \mathcal L^{-1} \|_{B(X_{\nu,1})} \| b-\bb \|_{X_{\nu,1}}
= \| \mathcal L^{-1} \|_{B(X_{\nu,1})}  \| b-\bb \|_{\ell_\nu^1} \le  \delta r_0.
\]

Finally, letting 
\begin{equation} \label{eq:Y_explicit}
Y \bydef Y_0 + \delta  r_0
\end{equation}
leads to the wanted bound as
\begin{align*}
\| \cT(\ba) - \ba \|_{X_{\nu,1}} 
& = \| \mathcal L^{-1} F(\ba,b) \|_{X_{\nu,1}} \\
& = \left\| \cL^{-1} \left( \cL \ba - b + \frac{h}{2}  \bL \cQ(\ba) \right) \right\|_{X_{\nu,1}}
\\
& \le \left\| \cL^{-1} \left( \cL \ba - \bb + \frac{h}{2}  \bL \cQ(\ba) \right) \right\|_{X_{\nu,1}} 
+ \left\| \cL^{-1} (b-\bb)\right\|_{X_{\nu,1}} 
\\
& \le  \| \cL^{-1} F(\ba,\bb) \|_{X_{\nu,1}} + \delta r_0 \\
& \le Y_0 + \delta r_0 = Y.
\end{align*}

\begin{remark}[\bf Wrapping effect] \label{rem:wrapping_effect}
In the current set-up, the bound \eqref{eq:Y_explicit} inevitably leads to a quick {\em wrapping effect}, as the error from the previous step is multiplied by the factor $\delta$, i.e. the upper bound \eqref{eq:delta} for $\|\cL^{-1}\|_{B(X_{\nu,1})}$. To exemplify this, assume that $\delta$ is constant along the integration and that after the first step, $r_0=\varepsilon$ (for some small $\varepsilon>0$). In this case, we expect the error bound $r_0 \ge \delta^k \varepsilon$ at step $k>1$.  For instance, if $\varepsilon = 10^{-14}$ and if $\delta = 1.5$, then at step $k>1$, the error bound should roughly be $10^{-14} 1.5^k$. Hence, expecting more than $k=80$ successful steps in this case if probably too ambitious, as in this case $10^{-14} 1.5^k>1$. See Tables~\ref{Tab:Fisher} and \ref{Tab:SH} for some explicit data. That being said, we believe that a multi-steps approach should significantly fix this problem. This approach is currently part of future research. 
\end{remark}

\subsection{The bound \boldmath$Z(r)$\unboldmath} \label{sec:Z_general}

Recall from \eqref{eq:Z_radPolyBanach} that the bound $Z(r)$ satisfies
\[
\sup_{c \in \ball{r}{\ba}} \| D_a \cT(c) \|_{B(X_{\nu,1})}  \le Z(r).
\]
The computation of the bound $Z(r)$ requires bounding the norm of some operators, which we do next.

\begin{lemma} \label{lem:bound_bL_norm}
Recall the definition of the operator $\Lambda$ in \eqref{eq:tridiagonal_Lambda} and the block diagonal operator $\bL$ in \eqref{eq:blockdiag}. Then $\bL \in B(X_{\nu,1})$ with
\begin{equation}
\| \bL \|_{B(X_{\nu,1})} \le 2.
\end{equation}
\end{lemma}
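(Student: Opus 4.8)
The plan is to exploit the block-diagonal structure of $\bL$ to reduce the statement to an $\ell^1$ operator-norm bound on a single block $\Lambda$, and then to estimate that block directly from its definition. Since $\bL a = (\Lambda a_0, \Lambda a_1, \dots)$ by \eqref{eq:blockdiag} and $\| a \|_{X_{\nu,1}} = \| a_0 \|_{\ell^1} + 2\sum_{k\ge1}\| a_k\|_{\ell^1}\nu^k$, we immediately get
\[
\|\bL a\|_{X_{\nu,1}} = \|\Lambda a_0\|_{\ell^1} + 2\sum_{k\ge1}\|\Lambda a_k\|_{\ell^1}\nu^k \le \|\Lambda\|_{B(\ell^1)}\left(\| a_0 \|_{\ell^1} + 2\sum_{k\ge1}\| a_k\|_{\ell^1}\nu^k\right) = \|\Lambda\|_{B(\ell^1)}\,\|a\|_{X_{\nu,1}},
\]
so it suffices to show $\|\Lambda\|_{B(\ell^1)} \le 2$.

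For that, I would argue directly from \eqref{eq:tridiagonal_Lambda}: for $y=(y_j)_{j\ge0}\in\ell^1$ one has $(\Lambda y)_0 = 0$ and $(\Lambda y)_j = y_{j+1} - y_{j-1}$ for $j\ge1$, hence by the definition \eqref{eq:ell_one_norm} of the $\ell^1$ norm,
\[
\|\Lambda y\|_{\ell^1} = 2\sum_{j\ge1}|y_{j+1} - y_{j-1}| \le 2\sum_{j\ge1}|y_{j+1}| + 2\sum_{j\ge1}|y_{j-1}|.
\]
Reindexing gives $\sum_{j\ge1}|y_{j+1}| = \sum_{m\ge2}|y_m| \le \sum_{m\ge1}|y_m|$ and $\sum_{j\ge1}|y_{j-1}| = \sum_{m\ge0}|y_m| = |y_0| + \sum_{m\ge1}|y_m|$, so
\[
\|\Lambda y\|_{\ell^1} \le 2\left(|y_0| + 2\sum_{m\ge1}|y_m|\right) = 2\|y\|_{\ell^1},
\]
which yields $\|\Lambda\|_{B(\ell^1)}\le 2$ and therefore $\|\bL\|_{B(X_{\nu,1})}\le 2$.

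Alternatively, one can invoke Lemma~\ref{lem:op_ell_one_norm}: the columns of $\Lambda$ are $\lambda_0 = (0,-1,0,0,\dots)$ with $\|\lambda_0\|_{\ell^1}=2$, $\lambda_1 = (0,0,-1,0,\dots)$ with $\|\lambda_1\|_{\ell^1}=2$, and for $n\ge2$ the column $\lambda_n$ has a $+1$ in row $n-1$ and a $-1$ in row $n+1$, so $\|\lambda_n\|_{\ell^1}=4$; Lemma~\ref{lem:op_ell_one_norm} then gives $\|\Lambda\|_{B(\ell^1)} = \max\{\|\lambda_0\|_{\ell^1},\ \sup_{n\ge1}\tfrac12\|\lambda_n\|_{\ell^1}\} = \max\{2,1,2\} = 2$. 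I do not anticipate any real obstacle; the only point requiring care is the weight $2$ attached to entries of index $\ge1$ in the $\ell^1$ norm \eqref{eq:ell_one_norm}, which must be tracked both in the reindexings above and in computing the column norms.
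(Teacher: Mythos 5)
Your proof is correct and follows essentially the same route as the paper: first reduce $\|\bL\|_{B(X_{\nu,1})}$ to $\|\Lambda\|_{B(\ell^1)}$ via the block-diagonal structure, then bound $\|\Lambda y\|_{\ell^1}$ by the triangle inequality and a reindexing, yielding $2\|y\|_{\ell^1}$. The alternative you sketch via Lemma~\ref{lem:op_ell_one_norm} (columns of $\Lambda$ have $\ell^1$-norm $2,2,4,4,\dots$) is a tidy reformulation of the same bound; both are fine.
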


\begin{proof}
First, note that $\| \bL \|_{B(X_{\nu,1})} \le \| \Lambda \|_{B(\ell^1)}$, 
as for any $a \in X_{\nu,1}$ with $\| a \|_{X_{\nu,1}} \le 1$,  
\begin{align*}
\| \bL a \|_{X_{\nu,1}} &= \| \Lambda a_{0} \|_{\ell^1} + 2 \sum_{k\ge1} \| \Lambda a_{k} \|_{\ell^1} \nu^k
 \le    \| \Lambda \|_{B(\ell^1)} \| a_{0} \|_{\ell^1}
+ 2 \sum_{k\ge1} \| \Lambda \|_{B(\ell^1)} \|  a_{k} \|_{\ell^1} \nu^k \\
& \le \| \Lambda \|_{B(\ell^1)} \left(  \| a_{0} \|_{\ell^1} + 2 \sum_{k\ge1} \| a_{k} \|_{\ell^1} \nu^k  \right)  \le \| \Lambda \|_{B(\ell^1)} \| a \|_{X_{\nu,1}} 
\le \| \Lambda \|_{B(\ell^1)}.
\end{align*}
Now, let $b \in \ell^1$ such that $\| b \|_{\ell^1} \le 1$. Then, recalling the definition of the tridiagonal operator $\Lambda$ in \eqref{eq:tridiagonal_Lambda}, the proof follows by observing that
\begin{align*}
\| \Lambda b \|_{\ell^1}  &=
2 \sum_{k \ge 1} |-b_{k-1}+b_{k+1}| 
 \le 2 |b_0| + 2 \sum_{k \ge 2} |b_{k-1}| + 2  \sum_{k \ge 1} |b_{k+1}|  \\
& = \left( |b_0| + 2 \sum_{j \ge 1} |b_j| \right) + \left( |b_0| + 2 \sum_{j \ge 2} |b_j| \right)  \le 2 \|b \|_{\ell^1} \le 2. \qedhere
\end{align*}
\end{proof}

%
%

\begin{lemma} \label{lem:gamma_Z(r)}
Let $\gamma(r)$ be any finite bound satisfying
\begin{equation} \label{eq:gamma(r)}
\sup_{c \in \ball{r}{\ba}} \| D_a \cQ(c)  \|_{B(X_{\nu,1})} \le \gamma(r).
\end{equation}
Then
\begin{equation} \label{eq:Z(r)_general}
Z(r) \bydef h \delta \gamma(r)
\end{equation}
satisfies \eqref{eq:Z_radPolyBanach}.
\end{lemma}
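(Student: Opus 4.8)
The plan is to use that $\cT$ is, up to an additive constant, the composition of $\cQ$ with fixed bounded linear operators, so that its Fr\'echet derivative factors through $D_a\cQ$ and the norm estimate reduces to submultiplicativity. First I would recall from \eqref{eq:T(a)} that
\[
\cT(a) = \cL^{-1}\!\left( b - \tfrac{h}{2}\,\bL\,\cQ(a) \right),
\]
in which $b$ does not depend on $a$, $\cL^{-1}\in B(X_{\nu,1})$ by Lemma~\ref{lem:final_bound_cL_inv}, and $\bL\in B(X_{\nu,1})$ by Lemma~\ref{lem:bound_bL_norm}. Since $Q$ is polynomial, $\cQ$ (hence $\cT$) is Fr\'echet differentiable, and because left-composition with a bounded linear operator is differentiable with the operator passing through the derivative, the chain rule gives, for every $c\in X_{\nu,1}$,
\[
D_a\cT(c) = -\tfrac{h}{2}\,\cL^{-1}\,\bL\,D_a\cQ(c)
\]
(equivalently, $D_a\cT(c)=I-\cL^{-1}D_aF(c)$ with $D_aF(c)=\cL+\tfrac{h}{2}\bL D_a\cQ(c)$, so the identity terms cancel).

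Next I would simply take the induced operator norm on $B(X_{\nu,1})$ and use submultiplicativity together with the three already-available bounds: $\|\cL^{-1}\|_{B(X_{\nu,1})}\le\delta$ from Lemma~\ref{lem:final_bound_cL_inv}, $\|\bL\|_{B(X_{\nu,1})}\le 2$ from Lemma~\ref{lem:bound_bL_norm}, and $\sup_{c\in\ball{r}{\ba}}\|D_a\cQ(c)\|_{B(X_{\nu,1})}\le\gamma(r)$ from \eqref{eq:gamma(r)}. Taking the supremum over $c\in\ball{r}{\ba}$ then yields
\[
\sup_{c\in\ball{r}{\ba}}\|D_a\cT(c)\|_{B(X_{\nu,1})}
\le \tfrac{h}{2}\,\|\cL^{-1}\|_{B(X_{\nu,1})}\,\|\bL\|_{B(X_{\nu,1})}\,\gamma(r)
\le \tfrac{h}{2}\cdot\delta\cdot 2\cdot\gamma(r)
= h\delta\gamma(r) = Z(r),
\]
which is exactly \eqref{eq:Z_radPolyBanach}. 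Note that the factor $2$ coming from $\|\bL\|_{B(X_{\nu,1})}$ and the factor $\tfrac12$ in front of $\bL\cQ$ in $\cT$ cancel, which is why the final constant is $h$ rather than $h/2$.

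I do not expect any serious obstacle here: the argument is a one-line application of the chain rule followed by submultiplicativity of the operator norm, using only the two operator-norm bounds that have already been established and the assumed finiteness of $\gamma(r)$ (which, with $h$ and $\delta$ fixed and finite, guarantees $Z(r)$ is finite). The only points worth stating carefully are that the derivative identity $D_a\cT(c)=-\tfrac{h}{2}\cL^{-1}\bL D_a\cQ(c)$ is valid on the Banach space $X_{\nu,1}$ (standard chain rule for a differentiable map post- and pre-composed with bounded linear maps), and that nothing about the detailed structure of $\cQ$, $\cL$, or the weights $\omega_{k,j}$ is needed beyond Lemmas~\ref{lem:final_bound_cL_inv} and~\ref{lem:bound_bL_norm}. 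The actual construction of an explicit admissible $\gamma(r)$ — which is where the model-dependent combinatorics of the polynomial nonlinearity enters — is deferred to the subsequent subsection and is not part of this lemma.
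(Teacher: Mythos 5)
Your proof is correct and follows essentially the same route as the paper: identify $D_a\cT(c)=-\tfrac{h}{2}\cL^{-1}\bL\, D_a\cQ(c)$, then apply submultiplicativity of the operator norm together with Lemmas~\ref{lem:final_bound_cL_inv} and~\ref{lem:bound_bL_norm} and the assumed bound $\gamma(r)$. The cancellation of the factor $2$ from $\|\bL\|_{B(X_{\nu,1})}$ with the $\tfrac12$ in $\cT$ is exactly the paper's computation.
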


\begin{proof}
The hypothesis that $Q=Q(u)$ is a polynomial implies that $\cQ(c)$ consists of discrete convolutions. 
Since $X_{\nu,1}$ is a Banach algebra under discrete convolutions, this implies that for any $r>0$,
\[
\sup_{c \in \ball{r}{\ba}} \| D_a \cQ(c)  \|_{B(X_{\nu,1})} < \infty.
\]
Now, letting $c \in \ball{r}{\ba}$ and using Lemma~\ref{lem:bound_bL_norm} and Lemma~\ref{lem:final_bound_cL_inv}, we get that
\begin{align*}
 \| D_a \cT(c) \|_{B(X_{\nu,1})} &=
 \| - \frac{h}{2} \mathcal \cL^{-1}  \bL D_a \cQ(c)  \|_{B(X_{\nu,1})}  \\
 & \le \frac{h}{2} \| \cL^{-1} \|_{B(X_{\nu,1})} \| \bL \|_{B(X_{\nu,1})} \| D_a \cQ(c)  \|_{B(X_{\nu,1})} \\
& \le h \delta  \| D_a \cQ(c)  \|_{B(X_{\nu,1})} \\
& \le h \delta \gamma(r) = Z(r). \qedhere
\end{align*}
\end{proof}


Note that the polynomial bound $\gamma(r)$ satisfying \eqref{lem:gamma_Z(r)} is problem dependent and will be computed explicitly for each of the PDE models we consider in Section~\ref{sec:applications}.

\begin{remark}[\bf A priori knowledge about a maximal step size] \label{rem:max_step_size}
Denoting
\begin{equation} \label{eq:Z1_general}
Z_1 \bydef Z(0) = h \delta \gamma(0) = h \delta
\| D_a \cQ(\ba)  \|_{B(X_{\nu,1})},
\end{equation}
a necessary condition for the radii polynomial approach to be successful is that $Z_1<1$. This is equivalent to require that 
\begin{equation} \label{eq:h_max}
h < h_{max} \bydef 
\frac{1}{\delta \| D_a \cQ(\ba)  \|_{B(X_{\nu,1})}}.
\end{equation}
From this observation, note that the larger $\|\ba\|_{B(X_{\nu,1})}$ is, the larger $\| D_a \cQ(\ba) \|_{B(X_{\nu,1})}$ is (indeed, $\cQ$ is a polynomial), and therefore smaller the step-size $h$ needs to be for the computer-assisted proof to be successful. 
\end{remark}

We explain in the section Applications (Section~\ref{sec:applications}) how to make use of the constraint \eqref{eq:h_max} to optimize our code. 

\subsection{Getting the bounds for the next initial condition} \label{sec:initial_condition_for_next_step}

Assume that at a previous time step, we computed $\ha \in \ball{r_0}{\ba} \bydef \left\{a \in X_{\nu,1}  \mid  \| a - \ba \|_{X_{\nu,1}} \leq r_0 \right\}$ such that $F(\ha) = 0$. The initial condition for the next step is then $b=(b_k)_{k \ge 0}$, where
\[
b_k \bydef \ha_k(1) = \ha_{k,0} + 2 \sum_{j \ge 1} \ha_{k,j}.
\]
Letting
\[
\bb_k \bydef \ba_{k,0} + 2 \sum_{j \ge 1} \ba_{k,j},
\]
then
\begin{align*}
\| b - \bb \|_{\ell_\nu^1} &=  |b_0 - \bb_0| + 2 \sum_{k \ge 1} |b_k - \bb_k| \nu^k \\
& = 
 \left| \ha_{0,0} + 2 \sum_{j \ge 1} \ha_{0,j} - \ba_{0,0} + 2 \sum_{j \ge 1} \ba_{0,j} \right| 
+ 2 \sum_{k \ge 1} \left| \ha_{k,0} + 2 \sum_{j \ge 1} \ha_{k,j} - \ba_{k,0} + 2 \sum_{j \ge 1} \ba_{k,j} \right| \nu^k \\
& \le \left( | \ha_{0,0} -  \ba_{0,0} | + 2 \sum_{j \ge 1} \left| \ha_{0,j} - \ba_{0,j} \right| \right) 
+ 2 \sum_{k \ge 1} \left| \ha_{k,0}  - \ba_{k,0}  \right| \nu^k
 + 4 \sum_{k,j \ge 1} \left| \ha_{k,j} - \ba_{k,j} \right| \nu^k \\
 & = 
 \sum_{k,j \ge 0} | \ha_{k,j} - \ba_{k,j} | \omega_{k,j}
  = \| \ha - \ba \|_{X_{\nu,1}} \le r_0.
\end{align*}

\section{Applications} \label{sec:applications}

In this section, we apply our approach to two models: Fisher's equation (in Section~\ref{sec:application_Fisher}) and the Swift-Hohenberg equation (in Section~\ref{sec:application_SH}).

To apply our approach (and Theorem~\ref{thm:radPolyBanach}) to each of the above PDE models, we compute the radii polynomial $p(r)$ defined in \eqref{eq:radPolyBanach}. For this we need the bounds $Y$ and $Z=Z(r)$ given by \eqref{eq:Y_radPolyBanach} and \eqref{eq:Z_radPolyBanach}, respectively. In Section~\ref{sec:Y_general}, we introduced the method to obtain the $Y$ bound in full generality in \eqref{eq:Y_explicit}. In Section~\ref{sec:Z_general}, recalling \eqref{eq:Z(r)_general}, we showed that $Z(r) \bydef h \delta \gamma(r)$ satisfied \eqref{eq:Z_radPolyBanach} with $\gamma(r)$ satisfying \eqref{eq:gamma(r)}. What remains to be done is to obtain explicitly the polynomial bound $\gamma(r)$ for each model, which we do next.

Before doing that, we briefly describe the optimization of the step-size we perform before each computer-assisted proof.

\subsection{Procedure for optimizing the step-size before a computer-assisted proof} \label{sec:optimizing_step_size}

Recalling Remark~\ref{rem:max_step_size} about the maximal step size, we now present a simple heuristic to optimize the step-size before attempting a computer-assisted proof. Recall the definition of the bound $Z_1$ in \eqref{eq:Z1_general}, and fix a target value $Z_1^{(\rm target)}<1$ that we want to achieve for $Z_1$ and a tolerance ${\tt tol}$. In all of the examples below, we chose $Z_1^{(\rm target)}=0.7$ and ${\tt tol} = 0.01$. Fix an initial tentative step-size $h_0>0$, and start the procedure.

Compute a solution $\ba$ of $F(\ba,b)=0$ using an iterative procedure (we use pseudo-Newton $a \mapsto a - \cL^{-1} F(a,b)$, which avoids having to compute numerically $DF(\ba)$ and $DF(\ba)^{-1}$). Make sure that the last Chebyshev coefficients of each Fourier modes $a_k(t)$ are of the order of machine precision ($\approx 10^{-16}$). Then compute (without interval arithmetic) the bound $\delta = \delta(h_0)$ given by formula \eqref{eq:delta}. Using \eqref{eq:Z1_general}, compute 
\[
Z_1 = h \delta \| D_a \cQ(\ba)  \|_{B(X_{\nu,1})},
\]
where the bound $\| D_a \cQ(\ba)  \|_{B(X_{\nu,1})}$ is easily obtained using Banach algebra estimates (e.g. see the explicit $Z_1$ bounds  \eqref{eq:Z1_Fisher} and \eqref{eq:Z1_SH} for each model we consider). 

\begin{itemize}
\item If $Z_1>Z_1^{(\rm target)}$ and $|Z_1-Z_1^{(\rm target)}| > {\tt tol}$, replace $h_0 \mapsto 0.9 h_0$ and start from the beginning.
\item If $Z_1<Z_1^{(\rm target)}$ and $|Z_1-Z_1^{(\rm target)}| > {\tt tol}$, replace $h_0 \mapsto 1.01 h_0$ and start from the beginning.
\item If $|Z_1-Z_1^{(\rm target)}| \le {\tt tol}$, then stop the procedure.
\end{itemize}

Repeat the steps until the wanted tolerance {\tt tol} is achieved or until you have reached an a priori fixed maximal number of steps. 

We are now ready to present some applications. 

\subsection{Fisher's equation} \label{sec:application_Fisher}

Fisher's equation is given by
\begin{equation} \label{eq:Fisher}
u_t = u_{xx} + \alpha u - \alpha u^2, \quad \alpha \in \R
\end{equation}
and it has applications in mathematical ecology, genetics, and the theory of Brownian motion \cite{MR1423804,MR639998,MR0400428}. We supplement Fisher's equation with even (i.e. $u(t, -x) = u(t, x)$) boundary conditions, plug \eqref{eq:cosine_Fourier_expansion} in \eqref{eq:Fisher} and this leads to the following infinite system of ordinary differential equations 
\begin{equation} \label{eq:ODEs_Fisher}
\frac{d \ta_k}{dt} = f_k^{({\rm F})}(\ta) \bydef (-k^2 + \alpha) \ta_k - \alpha (\ta*\ta)_k, \quad k \ge 0.
\end{equation}
Recalling \eqref{eq:ODEs_general}, we get for Fisher that $\lambda_k = -k^2 + \alpha$ and $Q_{k}(a) = - \alpha (a^2)_k$. 

\subsubsection{The bound \boldmath$\gamma(r)$~\unboldmath and \boldmath$h_{max}$\unboldmath}

Given any $c \in \ball{r}{\ba}$ and $h \in \ball{1}{0}$, note that $D_a \cQ(c) h = - 2 \alpha (c*h)$. Therefore 
\[
\| D_a \cQ(c)\|_{B(X_{\nu,1})} \le \sup_{h \in \ball{1}{0}} 2 |\alpha| \| c\|_{X_{\nu,1}} \| h \|_{X_{\nu,1}}
\le \gamma(r) \bydef
2 |\alpha| \left( \| \ba\|_{X_{\nu,1}} + r \right).
\]
Recalling Lemma~\ref{lem:gamma_Z(r)}, the polynomial $\gamma(r)$ satisfies \eqref{eq:gamma(r)} and \eqref{eq:Z(r)_general}, the bound $Z(r)$ for Fisher is given by
\begin{equation} \label{eq:Z(r)_Fisher}
Z(r) = 2h \delta |\alpha| \| \ba\|_{X_{\nu,1}} + 2h \delta |\alpha| r
\end{equation}
and 
\begin{equation} \label{eq:Z1_Fisher}
Z_1 = Z(0) = 2h \delta |\alpha| \| \ba\|_{X_{\nu,1}} .
\end{equation}
Hence, when applying the procedure for optimizing the step-size before a computer-assisted proof (see Section~\ref{sec:optimizing_step_size}), we have that
\[
h < h_{max} = \frac{1}{2 \delta |\alpha| \| \ba\|_{X_{\nu,1}}}.
\]

We fixed the parameter value in Fisher's equation to be $\alpha = 100$, at which there are $10$ unstable eigenvalues $\lambda_k \in \{19,36,51,64,75,84,91,96,99,100\}$. We fixed the initial condition to be $u_0(x) = -0.1 + 0.02 \cos(x) -0.002 \cos(2x)$. For the whole integration, we fixed the number of Fourier coefficients to be $20$. We report the results in Table~\ref{Tab:Fisher} and in Figure~\ref{fig:Fisher_100}.

\begin{table}[ht]
\centering
{
\begin{tabular}{ccccccc}
\hline
Steps & $h$ & $\#$ of Cheb. coeff. & $\delta$ & $r_0$  \\
\hline\\[-3mm]
$1$ & $4.5001 \times 10^{-3}$ & $17$ & $1.571$ & $1.6371 \times 10^{-13}$ \\
$2$ & $3.2806 \times 10^{-3}$ & $17$ & $1.455$ & $5.6452 \times 10^{-13}$ \\
$3$ & $2.3915 \times 10^{-3}$ & $16$ & $1.455$ & $1.4908 \times 10^{-12}$ \\
$4$ & $1.7963 \times 10^{-3}$ & $16$ & $1.455$ & $3.4225 \times 10^{-12}$ \\
$5$ & $1.4550 \times 10^{-3}$ & $16$ & $1.455$ & $7.5886 \times 10^{-12}$ \\
$6$ & $1.1785 \times 10^{-3}$ & $16$ & $1.455$ & $1.6416 \times 10^{-11}$ \\
$7$ & $9.5459 \times 10^{-4}$ & $15$ & $1.455$ & $3.4419 \times 10^{-11}$ \\
$8$ & $7.9665 \times 10^{-4}$ & $15$ & $1.455$ & $7.1317 \times 10^{-11}$ \\
$9$ & $7.1698 \times 10^{-4}$ & $15$ & $1.455$ & $1.5120 \times 10^{-10}$ \\
$10$ & $5.8076 \times 10^{-4}$ & $15$ & $1.455$ & $3.1155 \times 10^{-10}$ \\
$15$ & $2.9487 \times 10^{-4}$ & $15$ & $1.455$ & $1.2089 \times 10^{-8}$ \\
$20$ & $1.6470 \times 10^{-4}$ & $15$ & $1.455$ & $4.6332 \times 10^{-7}$ \\
$25$ & $9.7251 \times 10^{-5}$ & $15$ & $1.455$ & $1.7835 \times 10^{-5}$ \\
$30$ & $5.7426 \times 10^{-5}$ & $15$ & $1.455$ & $7.0234 \times 10^{-4}$ \\
$35$ & $3.4249 \times 10^{-5}$ & $15$ & $1.455$ & $2.6702 \times 10^{-2}$ \\

\hline 
\end{tabular}
}
\caption{Data for the rigorous enclosure of the solution of the Cauchy problem for Fisher's equation.}
\label{Tab:Fisher}
\end{table}

\begin{figure}[h!]
\begin{center}
\includegraphics[width=8cm]{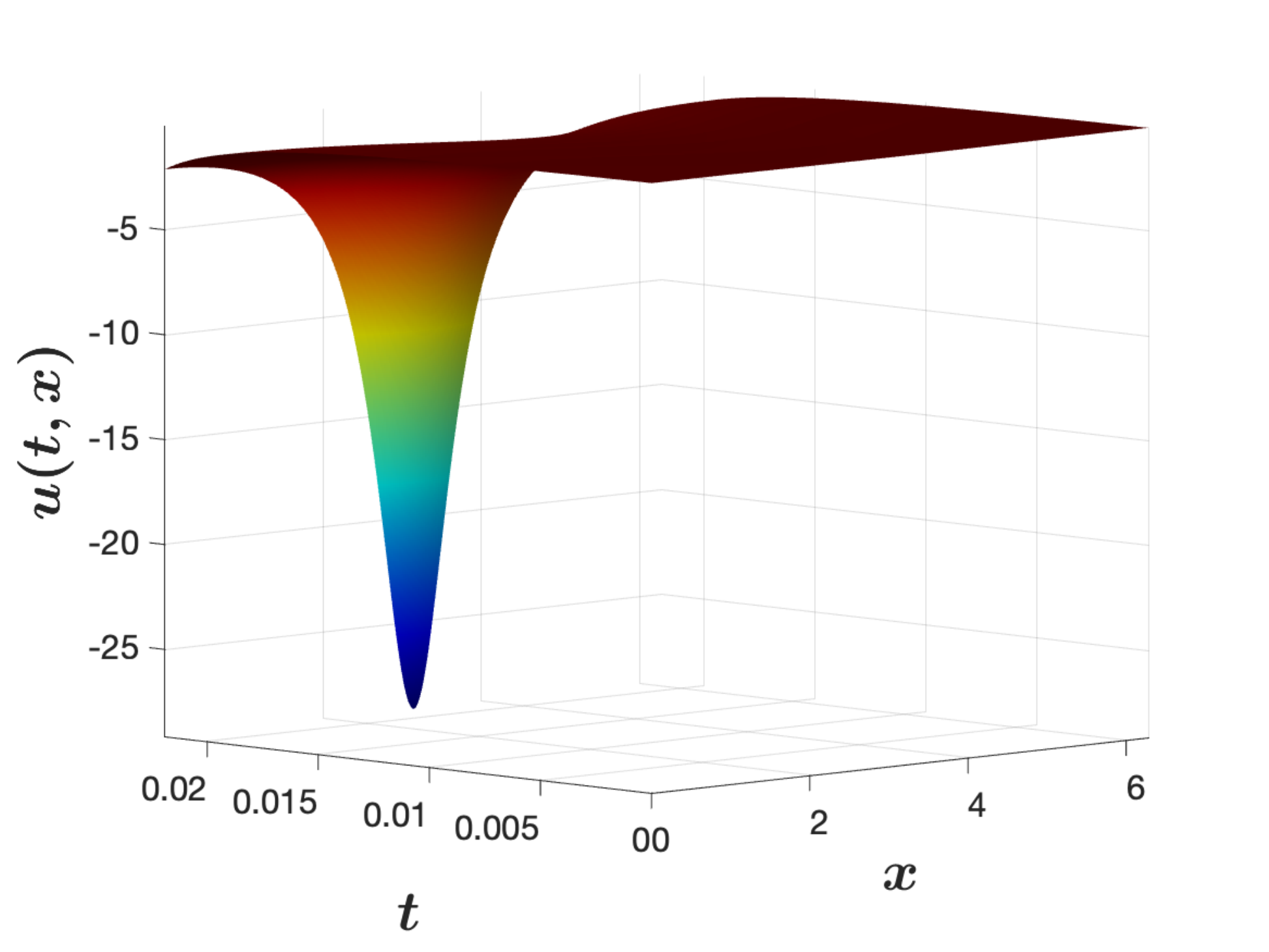}
\includegraphics[width=7cm]{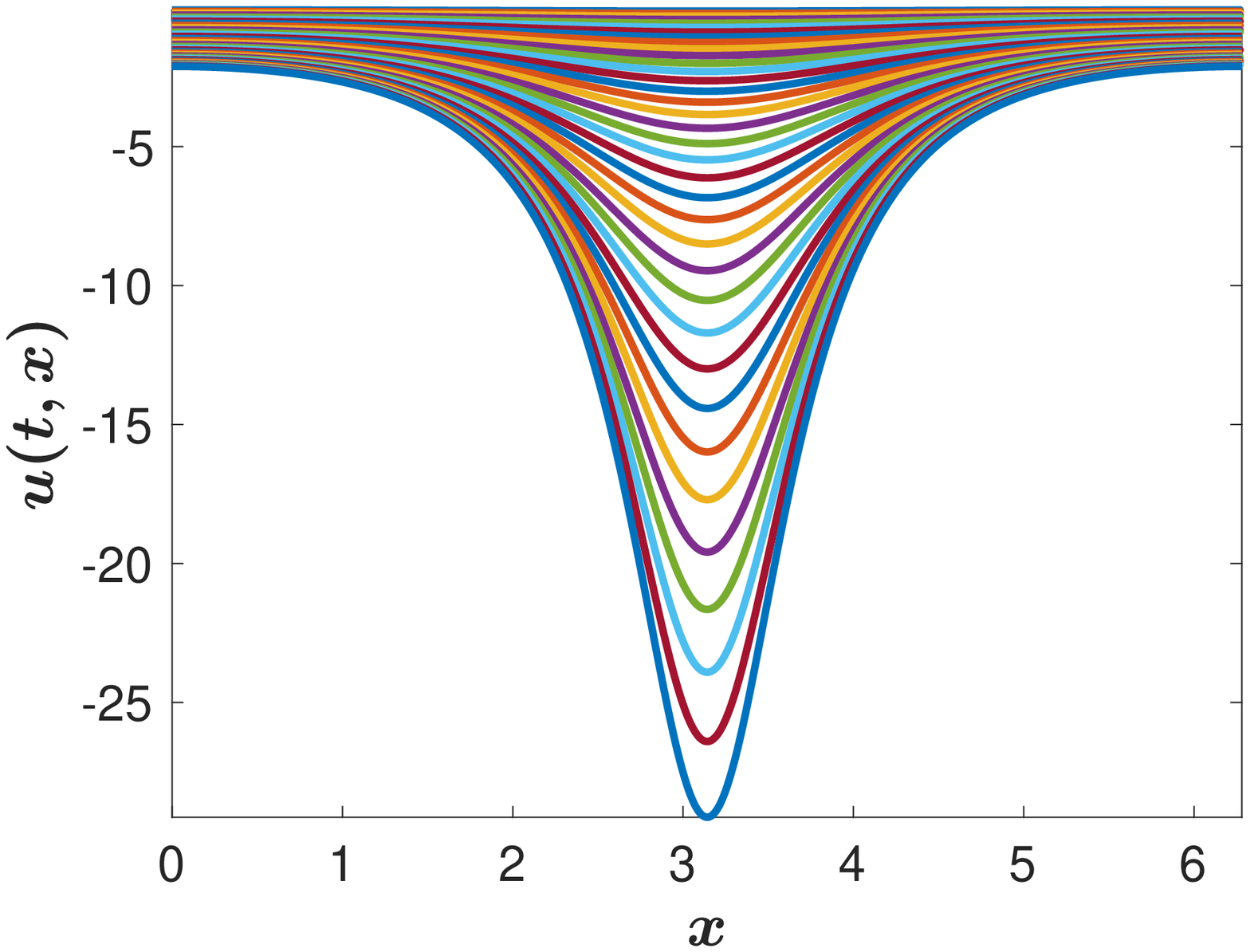}
\end{center}
\vspace{-.3cm}
\caption{The solution of the Cauchy problem for Fisher with $\alpha = 100$ and $u_0(x) = -0.1 + 0.02 \cos(x) -0.002 \cos(2x)$. 
The number of steps is $35$ and the total integration time is $0.021895$.
There are $10$ unstable eigenvalues which are given by $\lambda_k \in \{19,36,51,64,75,84,91,96,99,100\}$. Hence, the problem is very stiff.
}
\label{fig:Fisher_100}
\end{figure}

\subsection{Swift-Hohenberg equation} \label{sec:application_SH}

Swift-Hohenberg's (SH) equation 
\begin{equation} \label{eq:SH}
u_t = (\alpha-1)u - 2 u_{xx} - u_{xxxx} - u^3, \quad \alpha \in \R
\end{equation}
is used as a model for pattern formation due to a finite wavelength instability, such as in Rayleigh–B\'enard convection \cite{swift-hohenberg,cross_hohenberg}. Considering even boundary conditions leads (via the cosine Fourier expansion \eqref{eq:cosine_Fourier_expansion} plugged in \eqref{eq:SH}) to
\begin{equation} \label{eq:ODEs_SH}
\frac{d \ta_k}{dt} = f_k^{({\rm SH})}(\ta) \bydef \left( -k^4+2k^2+ \alpha-1 \right) \ta_k - (\ta*\ta*\ta)_k, \quad k \ge 0.
\end{equation}
Recalling \eqref{eq:ODEs_general}, we get for SH that $\lambda_k = -k^4+2k^2+ \alpha-1$ and $Q_{k}(a) = - (a^3)_k$. 

\subsubsection{The bound \boldmath$\gamma(r)~$\unboldmath and \boldmath$h_{max}$\unboldmath}

Given any $c \in \ball{r}{\ba}$ and $h \in \ball{1}{0}$, $D_a \cQ(c) h = - 3c^2*h$, and hence
\[
\| D_a \cQ(c) \|_{B(X_{\nu,1})} \le \gamma(r) \bydef
3 \left( \| \ba\|_{X_{\nu,1}} + r \right)^2.
\]
We therefore set
\begin{equation} \label{eq:Z(r)_SH}
Z(r) = 3 h \delta
\| \ba\|_{X_{\nu,1}}^2 + 
6h \delta \| \ba\|_{X_{\nu,1}} r + 3h \delta
r^2
\end{equation}
and 
\begin{equation} \label{eq:Z1_SH}
Z_1 = Z(0) = 3 h \delta
\| \ba\|_{X_{\nu,1}}^2 .
\end{equation}
Recall \eqref{eq:h_max} and note that for SH,
\[
h < h_{max} = \frac{1}{ 3 \delta \| \ba\|_{X_{\nu,1}}^2}.
\]

We consider $\alpha = 8.1$. At that parameter value, there are $2$ unstable eigenvalues: $7.1$ and $8.1$. Fix the initial condition to be  $u_0(x) = 0.02 \cos(x)$ which is roughly is the unstable manifold of $u \equiv 0$. We fix $\hat k =5$, which fixes the number of blocks $\cL_k$ ($k=0,\dots,5$), that we invert using the computer-assisted approach of Section~\ref{sec:bounds_cL_k_inv_small_k}. For the whole integration, we fixed the number of Fourier coefficients to be $15$. We report the results in Table~\ref{Tab:SH} and in Figure~\ref{fig:SH_8.1_plusSS}.

\begin{table}[ht]
\centering
{
\begin{tabular}{ccccccc}
\hline
Steps & $h$ & $\#$ of Cheb. coeff. & $\delta$ & $r_0$ \\
\hline\\[-3mm]
$1$ & $1.3391 \times 10^{-1}$ & $17$ & $2.9986$ & $7.3026 \times 10^{-16}$ \\
$2$ & $2.0136 \times 10^{-1}$ & $20$ & $5.2733$ & $2.4007 \times 10^{-13}$ \\
$3$ & $9.9226 \times 10^{-2}$ & $19$ & $2.2522$ & $1.4644 \times 10^{-12}$ \\
$4$ & $5.8592 \times 10^{-2}$ & $17$ & $1.6137$ & $4.6899 \times 10^{-12}$ \\
$5$ & $3.8443 \times 10^{-2}$ & $16$ & $1.455$ & $1.1851 \times 10^{-11}$ \\
$6$ & $2.5729 \times 10^{-2}$ & $15$ & $1.455$ & $2.6352 \times 10^{-11}$ \\
$7$ & $2.0841 \times 10^{-2}$ & $14$ & $1.455$ & $5.7253 \times 10^{-11}$ \\
$8$ & $1.6881 \times 10^{-2}$ & $14$ & $1.455$ & $1.2073 \times 10^{-10}$ \\
$9$ & $1.4371 \times 10^{-2}$ & $13$ & $1.455$ & $2.5096 \times 10^{-10}$ \\
$10$ & $1.2934 \times 10^{-2}$ & $13$ & $1.455$ & $5.2304 \times 10^{-10}$ \\
$15$ & $9.2809 \times 10^{-3}$ & $13$ & $1.455$ & $1.9945 \times 10^{-8}$ \\
$20$ & $7.2538 \times 10^{-3}$ & $12$ & $1.455$ & $7.5762 \times 10^{-7}$ \\
$25$ & $6.7263 \times 10^{-3}$ & $12$ & $1.455$ & $2.9086 \times 10^{-5}$ \\
$30$ & $6.2371 \times 10^{-3}$ & $11$ & $1.455$ & $1.1231 \times 10^{-3}$ \\
$35$ & $6.2371 \times 10^{-3}$ & $11$ & $1.455$ & $4.6083 \times 10^{-2}$ \\
\hline 
\end{tabular}
}
\caption{Data for the rigorous enclosure of the solution of the Cauchy problem for SH equation.}
\label{Tab:SH}
\end{table}

We report some results in Figure~\ref{fig:SH_8.1_plusSS}.

\begin{figure}[h!]
\begin{center}
\includegraphics[width=8.3cm]{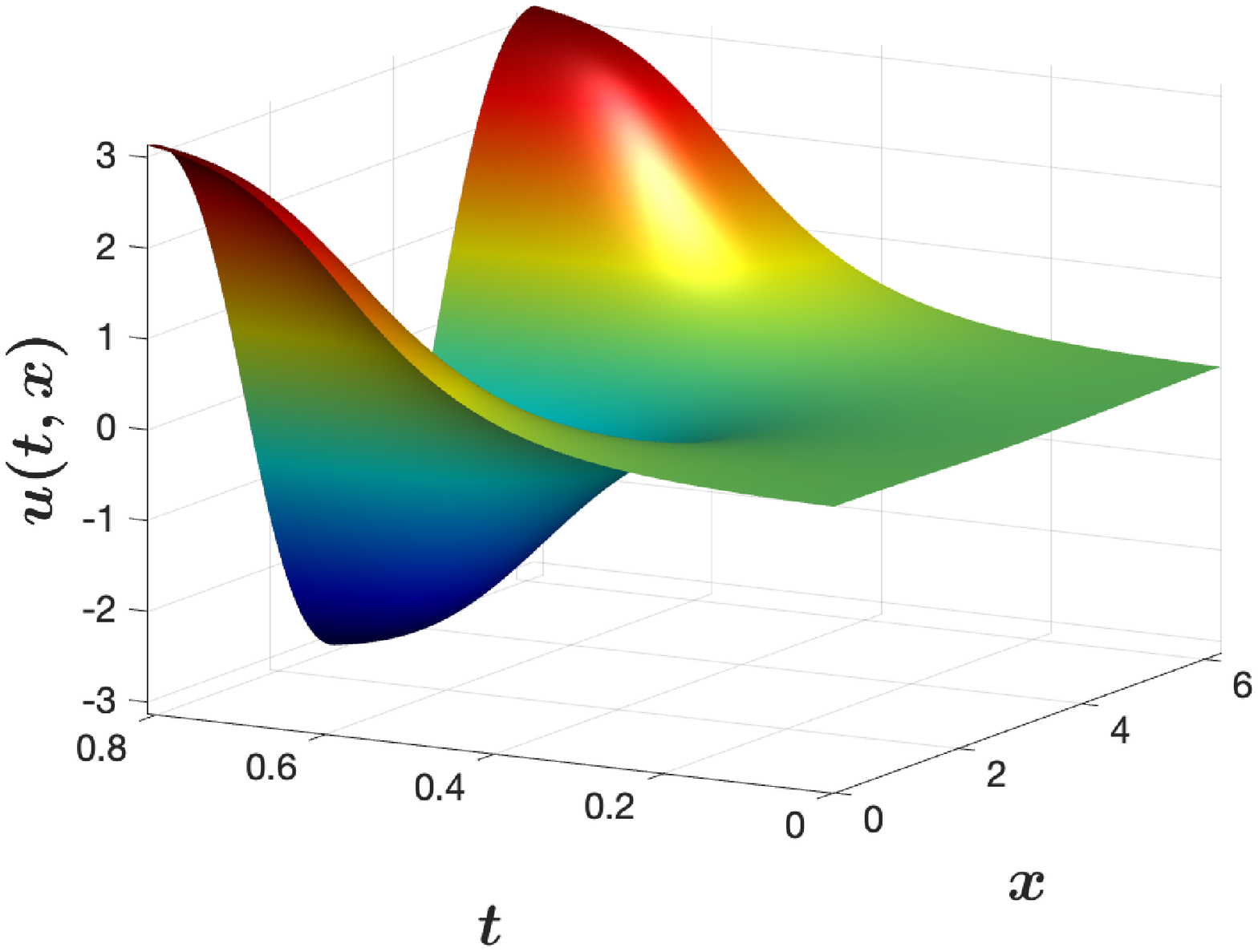}
\includegraphics[width=8cm]{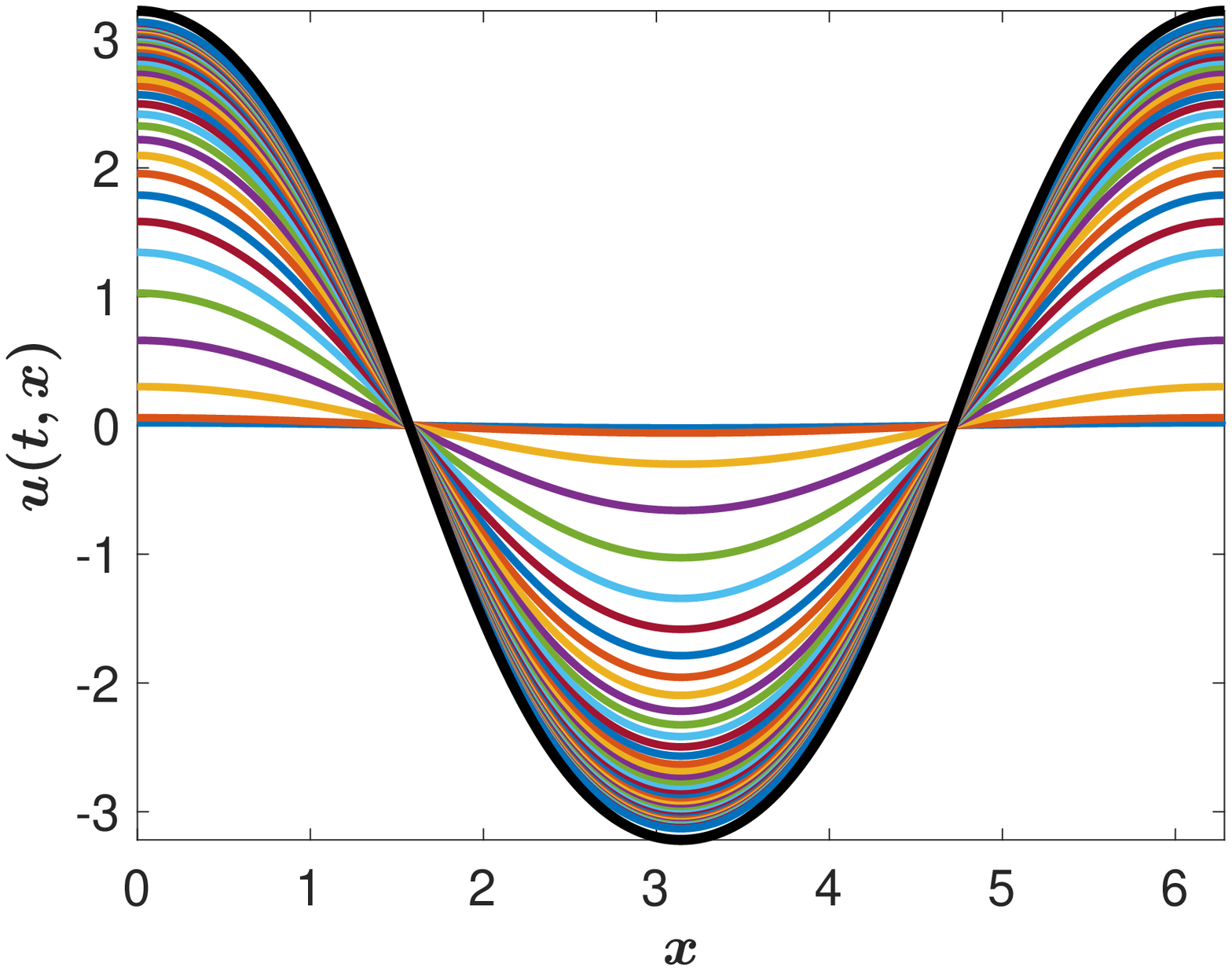}
\end{center}
\vspace{-.3cm}
\caption{The solution of the Cauchy problem for the Swift-Hohenberg equation with $\alpha = 8.1$ and $u_0(x) = 0.02 \cos(x)$, which is roughly is the unstable manifold of $u \equiv 0$. The number of steps is $35$ and the total integration time is $0.81035$. In thick black, we portrait the graph of a steady states of \eqref{eq:SH}, which shows that we almost an entire connecting orbit between $0$ and the nontrivial steady states. Note that there are two unstable eigenvalues: $7.1$ and $8.1$.
}
\label{fig:SH_8.1_plusSS}
\end{figure}

\section{Future directions} \label{sec:future}

There are many future research directions and open problems related to the described method that we will pursue in our future work. Major research efforts will be devoted to making our approach applicable for performing computer-assisted proofs in dynamics that require validated forward integration, like the existence of connecting orbits. To achieve this ultimate goal several improvements will be introduced. First, to deal with longer orbits (i.e. larger $h$) and solutions with larger norms, $\cL = DF(\ba)$ should be considered instead of $\cL = DF(0)$ when computing an approximate inverse for a Newton-like operator. Second, an effective way of fighting the wrapping effect needs to be employed (see Remark~\ref{rem:wrapping_effect}). Virtually all forward integration schemes suffer the issue when the error resulting from a single step of forward integration is being accumulated in a multiplicative way and leads to blow-up of bounds after a finite time. A promising simple solution to this issue in the context of our method is to employ a multi-step forward integration operator instead of the single-step one. 

A very important research direction is to adapt our technique to other important PDEs that are beyond scope of the present implementation, including Kuramoto-Sivashinsky,
Burgers, Navier-Stokes, Cahn-Hilliard and Ohta-Kawasaki model.  Generalizing this approach to PDEs defined on higher dimensional domains (allowing a Fourier expansion in space) is also an interesting direction. 

\section{Acknowledgements}
The project was initiated at  Semester Program Workshop
``Computation in Dynamics'' organized at the Institute for Computational and Experimental Research in Mathematics (ICERM) in 2016. JC was partially supported by NAWA Polish Returns grant PPN/PPO/2018/1/00029. JPL was supported by an NSERC Discovery Grant.

\bibliographystyle{unsrt}
\bibliography{papers}

\appendix
\newcommand{\C}{\widetilde{C}}
\newcommand{\mutt}{\mu^{1/2}}
\section{Proof of  Theorem~\ref{thm:tkimprov}}
\label{appxspacereg}

\paragraph{Assume} $\mu_k\geq 1$.

Lemma~\ref{lem:tkinvlowertriang} provides the following explicit formula for $T_k^{-1}$ lower triangle $\ell^1$ norm (including the diagonal)
\begin{equation}
\label{eq:tkinvlowertriangsum}
\left\|\left(T^{-1}_k\right)_{\ro{N-2n+1:N},\co{N-2n+c}}\right\|_{\ellmat} = \left\{\begin{array}{l}
\left\|\widetilde{I}_{\co{c}}\right\|_{\ellmat},\text{ for }n=1,\\
\left\|\widetilde{I}_{\co{c}}\right\|_{\ellmat} + \sum_{i=1}^{n-1}\mu_k^{ i } \left(a_{n-i} + \hat{a}_{n-i}\right)\prod_{p = 1}^{i-1}{\hat{a}_{n-p}|\widetilde{I}_{2c}}|\text{, for }1 < n \leq\frac{N}{2},
\end{array}
\right.
\end{equation}
where $n\in\{1,\dots,\frac{N}{2}\}$ denotes the considered column pair of $T_k^{-1}$ (the largest index $n=\lfloor\mu\rfloor$ denotes the first column pair, and $n=2$ denotes before the last pair), $c=1, 2$ denotes the first and the second element of the pair respectively, and $\widetilde{I}$ is the $2\times 2$ diagonal block \eqref{eq:I} that corresponds to the considered columns.

In order to bound $\ell^1$ norm of full $T_k^{-1}$ matrix, it reminds to bound $\ell^1$ norm of $T_k^{-1}$ upper triangle. We obtain an analogous formula to the one derived in Lemma~\ref{lem:tkinvlowertriang}, but just considering the 'reversed' $T_k$ matrix, which we denote by $\overline{T}_{k}$, i.e.,
\begin{equation}
\label{eq:Tbar}
\overline{T}_{k}\bydef \left[ \begin{array}{cccc} 
 2N & \eigk & 0 & \cdots\\
-\eigk & 2N-2 & \eigk & 0\\
\ddots&\ddots&\ddots&\ddots\\
\dots& 0 & -\eigk & 2
\end{array}\right]
\end{equation}

We define recursive sequences $\{\overline{a}_k\}, \{\hat{\overline{a}}_k\}$ (analogous to $\{a_k\}, \{\hat{a}_k\}$ from Def.~\ref{defaahat})
\begin{definition}
\label{defaabar}
Let $\{\overline{d}_j\}_{j=1}^N$ be the sequence of $\overline{T}_k$ diagonal elements, i.e., $\overline{d}_j=2N-2j+2$, let us define the following recursive sequences ($\overline{a}_0,\hat{\overline{a}}_0 = 0$)
\begin{equation*}
\arraycolsep=2pt\def\arraystretch{1.4}
\begin{array}{lll}
\overline{a}_j &\bydef \frac{ \overline{d}_{N-2j+2} + \overline{a}_{j-1}\mu_k^2}{\overline{d}_{N-2j+1}\overline{d}_{N-2j+2} + \overline{a}_{j-1}\overline{d}_{N-2j+1}\mu_k^2 + \mu_k^2} = \frac{ d_{2j-1} + \overline{a}_{j-1}\mu_k^2}{ d_{2j}d_{2j-1} + \overline{a}_{j-1}d_{2j}\mu_k^2 + \mu_k^2}\\
\hat{\overline{a}}_j &\bydef \frac{ \mu_k }{ \overline{d}_{N-2j+1}\overline{d}_{N-2j+2} + \overline{a}_{j-1}\overline{d}_{2j}\mu_k^2 + \mu_k^2} = \frac{ \mu_k }{ d_{2j}d_{2j-1} + \overline{a}_{j-1}d_{2j}\mu_k^2 + \mu_k^2}
\end{array}
\end{equation*}
for $j=1,2,3,\dots,\frac{N}{2}$.
\end{definition}
And obtain the analogous result to Lemma~\ref{lem:tkinvlowertriang}, but considering the 'reversed' $T_k$ matrix, i.e., $\overline{T}_k$. By 'reversed' we mean that it holds $\left(\overline{T}^{-1}_k\right)_{\ro{2n+1:N},\co{2n-2+c}} = \left(T^{-1}_k\right)_{\ro{1:N-2n},\co{N-2n+c'}}$.
\begin{lemma}
\label{lem:tkinvuppertriang}
It holds that 
\begin{multline}
\label{eq:normsupper}
\left\|\left(\overline{T}^{-1}_k\right)_{\ro{2n+1:N},\co{2n-2+c}}\right\|_{\ellmat} = \left\|\left(T^{-1}_k\right)_{\ro{1:N-2n},\co{N-2n+c'}}\right\|_{\ellmat} =\\\sum_{i=1}^{\frac{N}{2}-n}\mu_k^{ i } 
\left(\overline{a}_{\frac{N}{2}-n+1-i} + \hat{\overline{a}}_{\frac{N}{2}-n+1-i}\right)\prod_{p = 1}^{i-1}{\hat{\overline{a}}_{\frac{N}{2}-n+1-p}|\widetilde{I}_{2c}}|\text{, for }1 \leq n <\frac{N}{2},
\end{multline}
where $c=1, c'=2$ or $c=2, c'=1$. In the sum above for the case $i=1$ we put $\prod_{p = 1}^{i-1}{\hat{\overline{a}}_{\frac{N}{2}-n+1-p}}=1$.
\end{lemma}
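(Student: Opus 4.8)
The plan is to use that $\overline{T}_k$ is nothing but $T_k$ read ``backwards'', so that every structural result already established for $T_k$ in Section~\ref{sec:uniform_bounds_large_k} transfers to $\overline{T}_k$ with no new work. Let $J\in M_N(\R)$ be the flip permutation, $J_{i,j}=1$ iff $j=N+1-i$. A one-line entrywise check of the diagonal, super-diagonal and sub-diagonal shows $\overline{T}_k=J T_k J$: the diagonal $2,4,\dots,2N$ of $T_k$ reverses to $2N,2N-2,\dots,2=\overline d_j$, and since conjugation by $J$ exchanges the super- and sub-diagonal positions while $T_k$ carries $-\mu_k$ above and $+\mu_k$ below the diagonal, the conjugate carries $+\mu_k$ above and $-\mu_k$ below, exactly matching $\overline{T}_k$. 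As $J^2=I$ this gives $\overline{T}_k^{-1}=J T_k^{-1} J$, i.e.\ $(\overline{T}_k^{-1})_{i,j}=(T_k^{-1})_{N+1-i,\,N+1-j}$. Tracking indices under $i\mapsto N+1-i$, the row range $\ro{2n+1:N}$ is sent (reversed) onto $\ro{1:N-2n}$ and the column $2n-2+c$ is sent onto $N+1-(2n-2+c)=N-2n+c'$ with $c'=3-c$; since $\|\cdot\|_{\ellmat}$ is invariant under reordering entries, this already gives the left-hand equality in \eqref{eq:normsupper}.

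For the right-hand equality I would invoke Lemma~\ref{lem:tkinvlowertriang} with $\overline{T}_k$ in place of $T_k$. This is legitimate: $\overline{T}_k$ has exactly the tridiagonal shape \eqref{tridiagser} with diagonal $\overline d_j=2N-2j+2$ and $\pm\mu_k$ off the diagonal, and the identities $\overline d_{N-2j+2}=d_{2j-1}$, $\overline d_{N-2j+1}=d_{2j}$ identify the recursions of Definition~\ref{defaabar} with those of Definition~\ref{defaahat} applied to $\overline{T}_k$; hence Lemma~\ref{lemblock}, Lemma~\ref{lemblocks}, Corollary~\ref{corblocks}, Theorem~\ref{thm:explicit} and Lemma~\ref{lem:tkinvlowertriang} all hold verbatim for $\overline{T}_k$, with $(a_j,\hat a_j)$ replaced by $(\overline a_j,\hat{\overline a}_j)$. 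Applying Lemma~\ref{lem:tkinvlowertriang} at the bottom-right block index $m\bydef\frac{N}{2}-n+1$ — whose column pair is $\co{N-2m+1:N-2m+2}=\co{2n-1:2n}$ and whose row range is $\ro{2n-1:N}$ — yields
\[
\left\|\left(\overline{T}^{-1}_k\right)_{\ro{2n-1:N},\co{2n-2+c}}\right\|_{\ellmat}=\left\|\widetilde{I}_{\co{c}}\right\|_{\ellmat}+\sum_{i=1}^{m-1}\mu_k^{\,i}\left(\overline a_{m-i}+\hat{\overline a}_{m-i}\right)\prod_{p=1}^{i-1}\hat{\overline a}_{m-p}\,\left|\widetilde{I}_{2c}\right|,
\]
where $\widetilde{I}$ is the $2\times2$ diagonal block \eqref{eq:I} of $\overline{T}_k^{-1}$ at columns $2n-1,2n$. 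The two rows $\ro{2n-1:2n}$ are precisely that diagonal block and contribute $\left|\widetilde{I}_{1c}\right|+\left|\widetilde{I}_{2c}\right|=\left\|\widetilde{I}_{\co{c}}\right\|_{\ellmat}$; deleting them leaves the pure sum, and substituting $m-1=\frac{N}{2}-n$, $m-i=\frac{N}{2}-n+1-i$ and $m-p=\frac{N}{2}-n+1-p$ reproduces exactly the right-hand side of \eqref{eq:normsupper}, with the convention $\prod_{p=1}^{0}=1$ covering $i=1$ and the edge case $n=1$.

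I do not expect any analytic difficulty here, since the whole content is that reversing the basis order is a symmetry of the inversion problem. The only point requiring care is the index bookkeeping: translating between the ``block counted from the bottom-right'' index $n$ used in Lemma~\ref{lem:tkinvlowertriang}, the shifted index $m=\frac{N}{2}-n+1$ for $\overline{T}_k$, and the explicit row/column ranges of the statement, together with the swap $c\leftrightarrow c'$ and the empty-product/edge-case conventions. A cleaner-but-longer alternative, if one prefers not to cite Lemma~\ref{lem:tkinvlowertriang} in reversed form, is to re-run the Schur-complement recursion of Theorem~\ref{thm:explicit} directly on $\overline{T}_k$; the bookkeeping is identical.
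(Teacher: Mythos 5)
Your proof is correct and takes essentially the same approach as the paper, which simply states that the lower triangle of $\overline{T}_k^{-1}$ corresponds to the upper triangle of $T_k^{-1}$ and then invokes the calculation of Lemma~\ref{lem:tkinvlowertriang} applied to $\overline{T}_k$. You supply the details the paper leaves implicit — the explicit flip conjugation $\overline{T}_k = J T_k J$, the resulting entry-wise identity $(\overline{T}_k^{-1})_{i,j}=(T_k^{-1})_{N+1-i,N+1-j}$, the index bookkeeping yielding $c'=3-c$, and the shift $m=\tfrac{N}{2}-n+1$ when reading off Lemma~\ref{lem:tkinvlowertriang}, including peeling off the diagonal block $\|\widetilde{I}_{\co{c}}\|_{\ellmat}$ to restrict the row range to $\ro{2n+1:N}$ — and you also correctly observe that the off-diagonal sign swap in $\overline{T}_k$ is immaterial because the recursions involve only $\mu_k^2$ and the final quantities are $\ell^1$ norms.
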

\begin{proof}
The lower triangle of $\overline{T}_k^{-1}$ corresponds to the upper triangle $T_k^{-1}$. The same calculations as in the proof of Lemma~\ref{lem:tkinvlowertriang}, but performed for $\overline{T}_{k}$.
\end{proof}

\begin{lemma}
\label{lem:abar}
It holds that 
\[
\overline{a}_j < a_{\frac{N}{2}-j+1}. 
\]
\end{lemma}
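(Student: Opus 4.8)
The plan is to first strip away the reversed matrix: comparing Definition~\ref{defaabar} with Definition~\ref{defaahat}, one sees that $\overline{a}_j$ obeys \emph{exactly} the recursion defining $b_j$ (both with initial value $0$), so $\overline{a}_j=b_j$ for all $j$, and the statement is equivalent to $b_j<a_{\frac{N}{2}-j+1}$ for $1\le j\le\frac{N}{2}$. Using $d_\ell=2\ell$ one checks $N-2\bigl(\tfrac{N}{2}-j+1\bigr)+2=2j$ and $N-2\bigl(\tfrac{N}{2}-j+1\bigr)+1=2j-1$, so that with the Möbius maps
\[
\phi_j(x)\bydef\frac{d_{2j-1}+x\mu_k^2}{d_{2j}d_{2j-1}+x\,d_{2j}\mu_k^2+\mu_k^2},
\qquad
\psi_j(x)\bydef\frac{d_{2j}+x\mu_k^2}{d_{2j-1}d_{2j}+x\,d_{2j-1}\mu_k^2+\mu_k^2}
\]
we have $b_j=\phi_j(b_{j-1})$ and $a_{\frac{N}{2}-j+1}=\psi_j(a_{\frac{N}{2}-j})$. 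Both maps are increasing on $[0,\infty)$ (this is precisely Lemma~\ref{lemmono}), and a one-line cross multiplication shows that the numerator of $\psi_j(x)-\phi_j(x)$ equals $(d_{2j}-d_{2j-1})\bigl[d_{2j-1}d_{2j}+x\mu_k^2(d_{2j-1}+d_{2j})+\mu_k^2+x^2\mu_k^4\bigr]$, which is strictly positive because $d_{2j}>d_{2j-1}$; hence $\phi_j(x)<\psi_j(x)$ for every $x\ge 0$.

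From here I would exploit this pointwise inequality in two complementary ways. First, a \emph{reduction}: if for some $j<\frac{N}{2}$ we know $b_{j-1}\le a_{\frac{N}{2}-j}$, then, since $\phi_j$ is increasing and $\phi_j<\psi_j$ pointwise,
\[
b_j=\phi_j(b_{j-1})\le\phi_j\bigl(a_{\tfrac{N}{2}-j}\bigr)<\psi_j\bigl(a_{\tfrac{N}{2}-j}\bigr)=a_{\tfrac{N}{2}-j+1},
\]
which is the desired inequality. Second, a \emph{direct estimate}: since $b_{j-1}<1/\mu_k$ by Lemma~\ref{lema} and $a_{\frac{N}{2}-j}\ge 0$, the inequality $b_j<a_{\frac{N}{2}-j+1}$ already follows from $\phi_j(1/\mu_k)\le\psi_j(0)$, which after cross multiplication is the polynomial inequality $\mu_k^3\le 2d_{2j-1}d_{2j}+2\mu_k d_{2j}+2\mu_k^2$; this holds as soon as $d_{2j-1}d_{2j}\gtrsim\mu_k^3/2$, i.e.\ for all sufficiently large $j$ (in particular for $j=\frac{N}{2}$, where $a_0=0$ and the reduction is unavailable, since $N>2\mu_k$). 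So the work is confined to the remaining, moderate range of $j$, where I would try to verify the hypothesis $b_{j-1}\le a_{\frac{N}{2}-j}$ of the reduction.

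To verify $b_{j-1}\le a_{\frac{N}{2}-j}$, I would lower bound the right-hand side from its own recursion and Lemma~\ref{lemmono}: plugging $a_{m-1}\ge 0$ into Definition~\ref{defaahat} gives $a_{\frac{N}{2}-j}\ge \dfrac{d_{2j+2}}{d_{2j+1}d_{2j+2}+\mu_k^2}$. For the left-hand side, one iterates $b_{m}=\phi_m(b_{m-1})$ starting from $b_0=0$ (resp.\ from the a priori bound $b_{m}<1/\mu_k$ of Lemma~\ref{lema}), each application of the increasing map $\phi_m$ giving a sharper upper bound than the last; Lemma~\ref{lema2} makes this quantitative, as it shows that once the index reaches $2^p\mu_k$ the bound on $b_m$ has improved to $1/(2^{p+1}\mu_k)$, so that only $O(\log\mu_k)$ iterations are needed to bring $b_{j-1}$ below the recorded lower bound for $a_{\frac{N}{2}-j}$, after which one concludes by a final cross multiplication.

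The main obstacle I anticipate is the \emph{uniformity} of this last step. Because the quantities involved are continued fractions whose behaviour alternates with the parity of the index relative to $\mu_k$ (an ``even'' level is large, of order $\mu_k^2/d$, while an ``odd'' level is moderate, of order $d$), no single crude bound and no fixed number of iterations of the $b$-recursion suffices across the whole range $1\le j\le\frac{N}{2}$ uniformly in $\mu_k$: one must carefully stitch together the ``large-$j$'' direct estimate, the ``small-$j$'' elementary bound $b_{j-1}\lesssim j^2/\mu_k^2$, and the intermediate regime handled by iterating with the help of Lemma~\ref{lema2}, always alternating which of ``$\ge d_\ell$'' or ``$\ge\mu_k^2/d_{\ell\pm1}$'' is the governing lower bound. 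All the individual inequalities produced along the way are elementary polynomial inequalities in $j$ and $\mu_k$; the delicate part is purely the bookkeeping needed to cover every $j$ simultaneously.
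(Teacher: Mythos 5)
Your first two steps are correct and useful: $\overline{a}_j$ satisfies exactly the recursion of $b_j$ from Definition~\ref{defaahat}, so $\overline{a}_j=b_j$; and after the index shift $m=\frac{N}{2}-j+1$ (which sends $d_{N-2m+2}\mapsto d_{2j}$, $d_{N-2m+1}\mapsto d_{2j-1}$) the two updates become the M\"obius maps $\phi_j$ and $\psi_j$ you write down, and your cross-multiplication giving $\psi_j(x)-\phi_j(x)>0$ for $x\ge 0$ is correct. This is a real structural insight. However, the proposal does not constitute a proof, for two reasons.

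\emph{The ``direct estimate'' fails precisely where the reduction is unavailable.} You claim $\phi_j(1/\mu_k)\le\psi_j(0)$, i.e.\ $\mu_k^3\le 2d_{2j-1}d_{2j}+2\mu_k d_{2j}+2\mu_k^2$, holds ``in particular for $j=\frac{N}{2}$ \dots since $N>2\mu_k$.'' This is not so. At $j=\frac{N}{2}$ with $N$ just above $2\mu_k$ (say $N=2\mu_k+2$), $d_{2j-1}d_{2j}=d_{N-1}d_N\approx 16\mu_k^2$ and the whole right-hand side is $\approx 42\mu_k^2+O(\mu_k)$, which is below $\mu_k^3$ as soon as $\mu_k\gtrsim 40$. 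The regime $\mu_k$ large and $N$ barely above $2\mu_k$ is exactly the one the paper needs (Theorem~\ref{thm:tkimprov} allows arbitrarily large $\mu_k$ and only $N>2\mu_k$), yet in that regime the crude a priori bound $b_{j-1}<1/\mu_k$ from Lemma~\ref{lema} is far too weak; the actual value of $b_{\frac{N}{2}-1}$ is of size roughly $1/d_N\sim 1/(4\mu_k)$ and the inequality $b_{\frac{N}{2}}<a_1$ only holds by a thin margin, invisible to the $1/\mu_k$ estimate. So the one case you had flagged as ``handled directly'' is in fact open.

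\emph{The moderate-$j$ bookkeeping is left open, and the reduction does not induct.} You correctly note that the lemma's conclusion at $j-1$ gives $b_{j-1}<a_{\frac{N}{2}-j+2}$, not the hypothesis $b_{j-1}\le a_{\frac{N}{2}-j}$ that your reduction needs; since the sequence $a_1,a_2,\dots$ is increasing, the index slips in the wrong direction, so the reduction does not close into a simple induction. You acknowledge this and describe the stitching as anticipated work rather than completed work, so as submitted this is an outline, not a proof. For comparison, the paper's own proof of this lemma is a single sentence (``follows directly from the definition''), which likewise does not exhibit the argument; the lemma is more delicate than that phrasing suggests, and a complete proof will have to control $b_{j-1}$ by sharper means than Lemma~\ref{lema} (for instance by iterating the continued-fraction tail, in the spirit of Lemma~\ref{lema2} but without assuming $N\ge 2^p\mu_k$ for $p\ge 1$).
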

\begin{proof}
Follows directly from the definition of $\overline{a}_j$'s Definition~\ref{defaabar}, compare with the definition of $a_j$'s Definition~\ref{defaahat}.
\end{proof}


Using the bound derived in Lemma~\ref{lem:abar} we obtain an upper bound for $\ell^1$ norm of $T_k^{-1}$ upper triangle
\begin{lemma}
\label{lem:tkinvuppertriang2}
It holds that 
\begin{equation}
\label{eq:tkinvuppertriangsum}
\left\|\left(T^{-1}_k\right)_{\ro{1:N-2n},\co{N-2n+c}}\right\|_{\ellmat} <  \sum_{i=1}^{\frac{N}{2}-n}\mu_k^{ i } 
\left(a_{n+i} + \hat{a}_{n+i}\right)\prod_{p = 1}^{i-1}{\hat{a}_{n+p}|\widetilde{I}_{2c}}|\text{, for }1 \leq n <\frac{N}{2},
\end{equation}
where $c=1$ or $2$. In the sum above for the case $i=1$ we put $\prod_{p = 1}^{i-1}{\hat{a}_{n+p}}=1$.
\end{lemma}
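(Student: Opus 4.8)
The plan is to reduce the statement to the exact formula for the upper-triangle $\ell^1$ norm already recorded in Lemma~\ref{lem:tkinvuppertriang} — which is the mirror of Lemma~\ref{lem:tkinvlowertriang} applied to the reversed matrix $\overline{T}_k$, and which is written in the reversed recursive sequences $\overline{a}_j,\hat{\overline{a}}_j$ of Definition~\ref{defaabar} — and then to dominate that expression by the claimed sum, which is written in the forward sequences $a_j,\hat{a}_j$ of Definition~\ref{defaahat}. Concretely, Lemma~\ref{lem:tkinvuppertriang} gives
\[
\left\|\left(T^{-1}_k\right)_{\ro{1:N-2n},\co{N-2n+c}}\right\|_{\ellmat} = \sum_{i=1}^{\frac{N}{2}-n}\mu_k^{i}\left(\overline{a}_{\frac{N}{2}-n+1-i} + \hat{\overline{a}}_{\frac{N}{2}-n+1-i}\right)\left(\prod_{p = 1}^{i-1}\hat{\overline{a}}_{\frac{N}{2}-n+1-p}\right)|\widetilde{I}_{2c}|,
\]
with the same empty-product convention for $i=1$ as in the statement, and with the prefactor $|\widetilde{I}_{2c}|$ — a fixed entry of the diagonal block \eqref{eq:I} attached to the chosen column — common to both sides, hence harmless. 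Under the index substitution $j\mapsto\frac{N}{2}-j+1$, the reversed indices that occur in the $i$-th summand, namely $\frac{N}{2}-n-i+1,\dots,\frac{N}{2}-n$, correspond exactly to the forward indices $n+1,\dots,n+i$ occurring in the $i$-th summand of the target sum, so the two sums are aligned summand by summand, and it suffices to compare them summand by summand.

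For that comparison I would use two ingredients. First, Lemma~\ref{lem:abar}, which after the substitution gives $\overline{a}_{\frac{N}{2}-n+1-i} < a_{n+i}$, handling the $\overline{a}$ part of the additive factor (and it is from this strict inequality that the final $<$ is inherited). Second, the transfer of the $\hat{\overline{a}}$ factors to $\hat{a}$ factors. Here it is crucial that (ii) is \emph{not} the naive pointwise inequality $\hat{\overline{a}}_j\le\hat{a}_{\frac{N}{2}-j+1}$: that can fail for small $j$, because the hat recursions depend on the previous $\overline{a}$/$a$ term and the diagonal entries enter asymmetrically ($d_{2j}\neq d_{2j-1}$). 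Instead I would compare the combined factor $\overline{a}_j+\hat{\overline{a}}_j$ with $a_{\frac{N}{2}-j+1}+\hat{a}_{\frac{N}{2}-j+1}$ — over a common denominator this reduces to a polynomial inequality in $\mu_k$ that is favorable in the regime under consideration — and, for the product factors, exploit the uniform a priori bounds $\overline{a}_j,a_j<\sqrt{2}/\mu_k$ and $\hat{\overline{a}}_j,\hat{a}_j<1/\mu_k$ from Lemma~\ref{lema} (the reversed analogue is immediate, since $\overline{a}_j=b_j$ has exactly the recursion of $b_j$ in Definition~\ref{defaahat}, and $\hat{\overline{a}}_j<1/\mu_k$ follows from $\mu_k^2$ in its denominator). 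Combining these, each $i$-th reversed summand is dominated by the $i$-th forward summand; summing over $i$ and invoking the convention for $n=1$ (empty first product) and for the degenerate sum ranges gives the claim.

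The main obstacle is precisely step (ii): transferring the product $\prod\hat{\overline{a}}_j$ to $\prod\hat{a}_{\frac{N}{2}-j+1}$ when the individual hat terms are \emph{not} termwise comparable. One cannot simply quote Lemma~\ref{lem:abar}, because the reversed recursion reads the diagonal of $T_k$ in the opposite order; the missing slack has to be recovered from the diagonal asymmetry $d_{2j}>d_{2j-1}$ and from the fact that each $\hat{\overline{a}}_j$ multiplies smaller $\overline{a}$-type quantities. I expect this to require a short auxiliary lemma giving the combined comparison $\overline{a}_j+\hat{\overline{a}}_j\le a_{\frac{N}{2}-j+1}+\hat{a}_{\frac{N}{2}-j+1}$ together with a matching estimate for the products, both proved by induction in the spirit of Lemmas~\ref{lemmono}--\ref{lema} (base case explicit, inductive step through the monotonicity of the recursions); if a summand-by-summand argument proves too lossy, one falls back on bounding both sums globally via the same integral estimates used in the proof of Theorem~\ref{thm:tkimprov}. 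Everything else — the identification with Lemma~\ref{lem:tkinvuppertriang}, the cancellation of $|\widetilde{I}_{2c}|$, the index substitution, and the passage from the summand inequalities to the inequality of the sums — is routine bookkeeping.
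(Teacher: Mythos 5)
The student takes the same high-level route as the paper — reduce to the exact upper-triangle formula of Lemma~\ref{lem:tkinvuppertriang} written in the reversed sequences $\overline{a}_j,\hat{\overline{a}}_j$ of Definition~\ref{defaabar}, then try to dominate it termwise by the forward-sequence expression via $j\mapsto\tfrac{N}{2}-j+1$ — and, importantly, correctly identifies a genuine gap in the paper's one-line proof. Lemma~\ref{lem:abar} gives only $\overline{a}_{\frac{N}{2}-n+1-i}<a_{n+i}$ (the paper's proof writes $\hat a$ where $\overline a$ is meant), which covers the unhatted part of the additive factor and says nothing about the hatted factors $\hat{\overline{a}}$ that occur both in the additive factor and in the products $\prod_{p}\hat{\overline{a}}_{\frac{N}{2}-n+1-p}$. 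As the student observes, the naive pointwise transfer $\hat{\overline{a}}_j < \hat{a}_{\frac{N}{2}-j+1}$ genuinely fails: already for $j=1$, since $\overline{a}_0=0$, one has $\hat{\overline{a}}_1 = \mu_k/(d_1 d_2 + \mu_k^2) > \mu_k/(d_1 d_2 + a_{\frac{N}{2}-1}d_1\mu_k^2 + \mu_k^2) = \hat{a}_{\frac{N}{2}}$, and numerically the wrong-way inequality persists for several further $j$. The student's diagnosis is thus sharper than the paper's own (incomplete-as-written) argument.

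However, the proposed repair does not, as sketched, close the gap. The uniform bounds $\hat{\overline{a}}_j,\hat{a}_j<1/\mu_k$ from Lemma~\ref{lema} give the same upper bound for both partial products and hence no comparison between them; and the combined comparison $\overline{a}_j+\hat{\overline{a}}_j\le a_{\frac{N}{2}-j+1}+\hat{a}_{\frac{N}{2}-j+1}$ controls the additive factor but not the ratio of the hat-products, so ``combining these'' does not by itself imply that each reversed summand is dominated by the corresponding forward summand. What is actually needed is either a joint recursive or telescoping inequality tracking a partial-product ratio alongside the combined-factor comparison, or — as the student's fallback suggests — a global argument bounding both sides by the integral estimates of Theorem~\ref{thm:tkimprov}; but the latter route no longer proves the summand-by-summand inequality \eqref{eq:tkinvuppertriangsum} as stated, only the weaker bound needed downstream. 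In short: correct and valuable diagnosis of a real gap, but the proposed fix is still a sketch.
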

\begin{proof}
Put $j=\frac{N}{2}-n+1-i$ in Lemma~\ref{lem:abar}, then $\hat{a}_{\frac{N}{2}-n+1-i} < a_{n+i}$.
\end{proof}

\begin{lemma}
\label{lem:totalbd}
The following bound holds
\begin{equation}
\label{eq:totalbd}
\left\|\left(T_k^{-1}\right)_{\ro{1:N},\co{N-2n+c}}\right\|_{\ellmat} \leq 
\left\|\widetilde{I}_{\co{c}}\right\|_{\ellmat} + 
\mu_k|\widetilde{I}_{2c}|\left(\sum_{\substack{j=1\\j\neq n}}^{N/2}{a_j + \hat{a}_j}\right)<
\sum_{j=1}^{N/2}{a_j + \hat{a}_j},
\end{equation}
for all $n=1,\dots,\frac{N}{2}-1$ and $c=1,2$.
\end{lemma}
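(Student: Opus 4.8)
The plan is to cut the column $\co{N-2n+c}$ of $T_k^{-1}$ at its diagonal block and estimate the two halves separately with formulas already in hand. Concretely, I would write
\[
\left\|\left(T_k^{-1}\right)_{\ro{1:N},\co{N-2n+c}}\right\|_{\ellmat}=\left\|\left(T_k^{-1}\right)_{\ro{N-2n+1:N},\co{N-2n+c}}\right\|_{\ellmat}+\left\|\left(T_k^{-1}\right)_{\ro{1:N-2n},\co{N-2n+c}}\right\|_{\ellmat},
\]
apply Lemma~\ref{lem:tkinvlowertriang} (an equality) to the first term and Lemma~\ref{lem:tkinvuppertriang2} (a bound) to the second, and then use $0<\hat a_p<1/\mu_k$ from Lemma~\ref{lema} to collapse every product appearing there, $\mu_k^{i}\prod_{p=1}^{i-1}\hat a_{n\pm p}<\mu_k^{i}\mu_k^{-(i-1)}=\mu_k$. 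After reindexing ($j=n-i$ in the lower sum, $j=n+i$ in the upper one) the two sums merge into $\mu_k|\widetilde I_{2c}|\sum_{j\neq n}(a_j+\hat a_j)$, which is exactly the first inequality in the statement.

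For the second inequality I would use the entry-wise bounds on the diagonal block $\widetilde I$ that are already exploited in the proof of Lemma~\ref{lem:tkinvelement} (they follow from Corollary~\ref{corblocks} and Lemma~\ref{lema}): $|\widetilde I_{11}|<a_n$, $|\widetilde I_{12}|=|\widetilde I_{21}|<\hat a_n$, $|\widetilde I_{22}|\le b_{(N-2n+2)/2}<1/\mu_k$, together with $\mu_k|\widetilde I_{2c}|<1$ (immediate for $c=1$ since the determinant obeys $D>\mu_k^2$, and for $c=2$ since $\mu_k|\widetilde I_{22}|\le\mu_k b_{(N-2n+2)/2}<1$). Writing $\sum_{j=1}^{N/2}(a_j+\hat a_j)=(a_n+\hat a_n)+\sum_{j\neq n}(a_j+\hat a_j)$, it suffices to show
\[
\|\widetilde I_{\co c}\|_{\ellmat}+\mu_k|\widetilde I_{2c}|\sum_{j\neq n}(a_j+\hat a_j)<(a_n+\hat a_n)+\sum_{j\neq n}(a_j+\hat a_j).
\]
For $c=1$ this is immediate: $\|\widetilde I_{\co 1}\|_{\ellmat}=|\widetilde I_{11}|+|\widetilde I_{21}|<a_n+\hat a_n$, $\mu_k|\widetilde I_{21}|<1$, and $\sum_{j\neq n}(a_j+\hat a_j)>0$.

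The case $c=2$ is where I expect the real work to be, and I would flag it as the main obstacle. Here $\|\widetilde I_{\co 2}\|_{\ellmat}=|\widetilde I_{12}|+|\widetilde I_{22}|<\hat a_n+b_{(N-2n+2)/2}$, and the $b$-term need not be $\le a_n$ (near the maximum of the sequence $b_j$ one may have $b_{(N-2n+2)/2}>a_n$), so the easy argument used for $c=1$ breaks down and the slack in $\mu_k|\widetilde I_{22}|<1$ has to be used quantitatively. I would rewrite the left-hand side as $|\widetilde I_{12}|+|\widetilde I_{22}|\bigl(1+\mu_k\sum_{j\neq n}(a_j+\hat a_j)\bigr)$, bound $|\widetilde I_{12}|<\hat a_n$, and thereby reduce to $|\widetilde I_{22}|-a_n<\bigl(1-\mu_k|\widetilde I_{22}|\bigr)\sum_{j\neq n}(a_j+\hat a_j)$. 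This is immediate when $|\widetilde I_{22}|\le a_n$; in the remaining range one substitutes the explicit rational forms of $\widetilde I_{22}$ and $a_n$ from Corollary~\ref{corblocks} and Definition~\ref{defaahat}, uses that the determinant $D$ of $\widetilde I^{-1}$ is large and that $1-\mu_k|\widetilde I_{22}|$ has a clean closed form in the diagonal entries of $\widetilde I^{-1}$, and balances it against a lower bound for $\sum_{j\neq n}(a_j+\hat a_j)$ (which always contains at least the terms of index near $N/2$, where $\hat a_j$ is of size $\sim 1/\mu_k$). Carrying out this one rational inequality is the only genuinely nontrivial step; everything else is bookkeeping with formulas derived above.
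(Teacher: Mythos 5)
Your plan for the first inequality and for the case $c=1$ of the second inequality is the paper's argument almost verbatim: split the column at the diagonal block, apply Lemma~\ref{lem:tkinvlowertriang} (equality) and Lemma~\ref{lem:tkinvuppertriang2} (bound), collapse each product $\mu_k^i\prod_{p=1}^{i-1}\hat a_{n\pm p}$ to $\mu_k$ using $0<\hat a_p<1/\mu_k$, merge the two sums, and then for $c=1$ observe $\|\widetilde I_{\co 1}\|_{\ellmat}=|\widetilde I_{11}|+|\widetilde I_{21}|<a_n+\hat a_n$ together with $\mu_k|\widetilde I_{21}|<1$.

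Your suspicion about $c=2$ is not only well-founded --- it exposes a gap in the paper's own argument. The paper's proof overestimates $\|\widetilde I_{\co 2}\|_{\ellmat}<b_{(N-2n+2)/2}+\hat a_n<a_n+\hat a_n$; the second step amounts to $b_{(N-2n+2)/2}<a_n$, which would follow from Lemma~\ref{lem:abar} via the observation $\overline a_j=b_j$ (same recursion and initial data), but Lemma~\ref{lem:abar} is ``proved'' in a single sentence and the claimed inequality is in fact false: for example, with $\mu_k=300$ and $N=602$ one has $a_1=d_N/(d_{N-1}d_N+\mu_k^2)\approx 7.832\times 10^{-4}$ while $b_{N/2}\approx 7.845\times 10^{-4}>a_1$, so that $b_{(N-2n+2)/2}>a_n$ at $n=1$. (Your guess placed the breakdown near the maximum of $\{b_j\}$; it actually occurs near $n=1$, where both $a_1$ and $b_{N/2}$ are small and the margin is thinnest.) The conclusion of the lemma still appears to hold --- indeed, with $\alpha\bydef d_{N-2n+2}+a_{n-1}\mu_k^2$, $\beta\bydef d_{N-2n+1}+b_{(N-2n)/2}\mu_k^2$, $D\bydef\alpha\beta+\mu_k^2$ one finds
\[
(a_n+\hat a_n)-\|\widetilde I_{\co 2}\|_{\ellmat}
=\frac{\mu_k^3 b(\alpha-\mu_k)+2\alpha\beta+a_{n-1}\mu_k^2 D+2\mu_k^2}{D\,(d_{N-2n+1}\alpha+\mu_k^2)},
\qquad b\bydef b_{(N-2n)/2},
\]
which is manifestly positive when $\alpha\ge\mu_k$ (covering e.g.\ $n=1$); the remaining range $\alpha<\mu_k$ (forcing $n$ close to $N/2$) is exactly the rational inequality you describe and still needs the slack from $1-\mu_k|\widetilde I_{22}|$ as you propose. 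So your diagnosis is correct and your proposed route is the right one; the remaining estimate that you flag as ``the only genuinely nontrivial step'' is indeed the missing content, both in your proposal and in the published proof.
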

\begin{proof}
Applying Lemmas~\ref{lem:tkinvlowertriang},~\ref{lem:tkinvuppertriang2}, the upper bounds from Lemma~\ref{lema} we bound the sums in \eqref{eq:tkinvlowertriangsum} and \eqref{eq:tkinvuppertriangsum} by (for both of $c$ values) $\sum_{j=1}^{n-1}{a_j+\hat{a}_j}$ and $\sum_{j=n+1}^{\frac{N}{2}}{a_j+\hat{a}_j}$ respectively. Using the upper bounds \eqref{eq:eqIbds} we overestimate $\left\|\widetilde{I}_{\co{1}}\right\|_{\ellmat} < a_n+\hat{a}_n$, $\left\|\widetilde{I}_{\co{2}}\right\|_{\ellmat} < b_{(N-2n+2)/2}+\hat{a}_n < a_n+\hat{a}_n$. Hence the final overestimate follows.
\end{proof}

Let us recall the following bound from Lemma~\ref{lema} (it also holds for $\hat{\overline{a}}$, which is easy to check)
\begin{equation}
\label{eq:hata}
    0<\hat{a}_j,\ \hat{\overline{a}}_j < \frac{1}{\mu_k}.
\end{equation}

We will use the fact (Lemma~3.6) that $a_j$ and $\hat{a}_j$ is increasing as a function of $a_{j-1}$.

We use the following trivial lower bound for $a_j$'s
\begin{equation}
\label{eq:a1lower}
a_1 = \frac{d_N}{d_{N-1}d_N + \mu_k^2}\geq \frac{4\mu_k}{17\mu_k^2}\geq \frac{4}{17\mu_k}.
\end{equation}
We show that lower bound $\frac{4}{17\mu_k}$ holds also for $a_{j}$. We proceed by induction, assuming $a_{j-1}\geq\frac{4}{17\mu_k}$, we have
\[
a_{j} = \frac{ d_{N-2j+2} + a_{j-1}\mu_k^2}{ d_{N-2j+1}d_{N-2j+2} + a_{j-1}d_{N-2j+1}\mu_k^2 + \mu_k^2} \geq \frac{ d_{N-2j+2} + \frac{4\mu_k}{17}}{ d_{N-2j+1}d_{N-2j+2} + \sqrt{2}\mu_k  d_{N-2j+1} + \mu_k^2},
\]
denoting $\beta = d_{N-2j+1}$, we validate
\[
\frac{\beta+2 + \frac{4\mu_k}{17}}{\beta(\beta+2)+\sqrt{2}\mu_k\beta+\mu_k^2} \geq \frac{4}{17\mu_k},
\]
after simplifying we end up with
\[
(17-4\sqrt{2})\mu_k\beta + 26\mu_k-4\beta^2\geq 0,
\]
which is true for all $\beta<2\mu_k$ (we consider only rows $1:2\left\lfloor\mu_k\right\rfloor$).

Using the upper bound $a_{j-1} \leq \frac{\sqrt{2}}{\mu_k}$ from Lemma~\ref{lema}, \eqref{eq:a1lower}, and the fact that $a_j$ is increasing as a function of $a_{j-1}$ we obtain the following upper bound for $a_j$'s 
\[
a_j \leq \frac{d_{N-2j+2} + \sqrt{2}\mu_k}{d_{N-2j+2}d_{N-2j+1} + a_{j-1}d_{N-2j+1}\mu_k^2 + \mu_k^2} \leq \frac{d_{N-2j+2} + \sqrt{2}\mu_k}{d_{N-2j+2}d_{N-2j+1} + 4d_{N-2j+1}\mu_k/17 + \mu_k^2} = A_j.
\]

It also holds 

\begin{align}
    a_j&\leq \frac{d_{N-2j+2} + A_{j-1}\mu_k^2}{d_{N-2j+2}d_{N-2j+1} +4d_{N-2j+1}\mu_k/17 +\mu_k^2},\nonumber
\end{align}
where $A_{j-1} = \frac{d_{N-2j+4} + \sqrt{2}\mu_k}{d_{N-2j+3}d_{N-2j+4} + 4d_{N-2j+3}\mu_k/17+\mu_k^2}$.

By performing analogous computations it also holds that
\[
\hat{a}_{j}\leq \frac{\mu_k}{d_{N-2j+2}d_{N-2j+1}+4d_{N-2j+1}\mu_k/17+\mu_k^2}.
\]

We estimate the first sum in \eqref{eq:totalbd} by the definite integral, where we use substitution $y\bydef \frac{N}{2}-j$, we also use $\mu=\mu_k$ to simplify the notation. The range for $y$'s is fixed $y\in[1,\lfloor\mu_k\rfloor]$, as the norm in \eqref{eq:totalbd} concerns only rows $1:2\lfloor\mu_k\rfloor$, i.e. $j$ such that $N-2j+2\leq 2\lfloor\mu_k\rfloor$. We overestimate the finite sums by the definite integral over the wider and continuous range $y\in[0,\mu_k]$ as follows
\begin{multline*}
    \sum_{j=1}^{\left\lfloor\mu_k\right\rfloor}{a_j} \leq \int_{y=0}^{\mu}{\frac{2(2y+2)+A_{j-1}\mu^2}{4(2y+2)(2y+1)+8(2y+1)\mu/17+\mu^2}\,dy} \leq
    \int_{y=0}^{\mu}{\frac{4(y+1) + \left(\sqrt{2} \mu+4(y+2)\right)\mu^2}{\left(\mu^2+16 y^2\right)^2}\,dy} =\\
    \frac{\mu^3 \left(4 \left(4+\sqrt{2}\right)+17 \sqrt{2} \tan ^{-1}(4)\right)+8 \mu^2 \left(4+17 \tan ^{-1}(4)\right)+16 \mu+16+68 \tan ^{-1}(4)}{136 \mu^3}.
\end{multline*}
The maximal order w.r.t. $\mu$  is the same in the numerator as in denominator, hence the function above is decreasing w.r.t. $\mu$. The proof of the integral formula, a plot, and numerical evaluations for given $\mu$'s can be found in the attached Mathematica script.

It holds for example
\begin{subequations}
\label{eq:aksums}
\begin{align}
    \sum_{j=1}^{\left\lfloor\mu_k\right\rfloor}{a_j} &\leq 0.474\text{ for }\mu=10,\nonumber\\
    \sum_{j=1}^{\left\lfloor\mu_k\right\rfloor}{a_j} &\leq 0.404\text{ for }\mu=100,\label{bdfirst}\\
    \sum_{j=1}^{\left\lfloor\mu_k\right\rfloor}{a_j} &\leq 0.395\text{ for }\mu=1000,\nonumber
\end{align}
\end{subequations}

The second term in Lemma~\ref{lem:totalbd} is constant as
\[
   \sum_{j=1}^{\left\lfloor\mu_k\right\rfloor}{\hat{a}_j} \leq \int_{y=0}^{\mu}{\frac{\mu}{4(2y+2)(2y+1)+8(2y+1)\mu/17+\mu^2}\,dy} \leq \int_{y=0}^{\mu}{\frac{\mu}{16y^2+\mu^2}\,dy} = \frac{1}{4} \tan ^{-1}(4)
\]

Hence, 
\begin{equation}
\label{eq:akhatsum}
\sum_{j=1}^{\left\lfloor\mu_k\right\rfloor}{\hat{a}_j} \leq 0.332.
\end{equation}

Finally the explicit bounds given in Theorem~\ref{thm:tkimprov} follow from Lemma~\ref{lem:totalbd}, \eqref{eq:aksums}, and \eqref{eq:akhatsum}.
\qedsymbol

\end{document}